\definecolor{Myblue}{rgb}{0,0,0.6}
\newcommand{\CMidRule}{\noalign\bgroup\@CMidRule{}}
\NewDocumentCommand{\@CMidRule}{
	m % Material to reinsert before cmidrule.
	O{0.0ex} % #1 = left adjust
	O{0.0ex} % #1 = right adjust
	m  %       #3 = columns to span
}{
	\peek_meaning_remove_ignore_spaces:NTF \CMidRule
	{ \@CMidRule { #1 \cmidrule[\cmidrulewidth](l{#2}r{#3}){#4} } }
	{ \egroup #1 \cmidrule[\cmidrulewidth](l{#2}r{#3}){#4} }
}
\theoremstyle{plain}
\newtheorem{thm}{Theorem}[section] % reset theorem numbering for each chapter
\newtheorem{proposition}[thm]{Proposition}
\newtheorem{lemma}[thm]{Lemma}
\newtheorem{cor}[thm]{Corollary}
\theoremstyle{definition}
\newtheorem{defn}[thm]{Definition} % definition numbers are dependent on theorem numbers
\newtheorem{remark}[thm]{Remark}
\newtheorem{example}[thm]{Example}
\newcommand{\scr}[1]{\mathscr{#1}}
\newcommand{\call}[1]{\mathcal{#1}}
\newcommand{\comment}[1]{}
\newcommand{\lto}{\longrightarrow}
\newcommand{\cut}{(\operatorname{cut})}
\newcommand{\ax}{(\operatorname{ax})}
\newcommand{\inc}{\iota}
\newcommand{\LM}{\operatorname{LM}}
\newcommand{\LT}{\operatorname{LT}}
\def\l{\,|\,}
\newcommand{\ediv}[2]{\operatorname{div}(#1,#2)}
\newcommand{\edives}[2]{\operatorname{div}_{es}(#1,#2)}
\newcommand{\multideg}{\operatorname{multideg}}
\DeclareRobustCommand{\diamondtimes}{%
	\mathbin{\text{\rotatebox[origin=c]{45}{$\boxplus$}}}%
}
\newcommand{\tagarray}{\mbox{}\refstepcounter{equation}$(\theequation)$}
\newcommand\showdiv[1]{\overline{\smash{)}#1}}
\gdef\scalefactor{#1}\begin{center}\proofSkipAmount \leavevmode}%
\scalebox{\scalefactor}{\DisplayProof}\proofSkipAmount \end{center} }
\title{Elimination and cut-elimination in multiplicative linear logic}
\author{Daniel Murfet, William Troiani}
\date{\today}
\begin{document}
\maketitle

\begin{abstract}
We associate to every proof structure in multiplicative linear logic an ideal which represents the logical content of the proof as polynomial equations. We show how cut-elimination in multiplicative proof nets corresponds to instances of the Buchberger algorithm for computing Gröbner bases in elimination theory.
\end{abstract}

\tableofcontents
	
\section{Introduction}

The correspondences between logic and computation that can be found in the work  Curry-Howard \cite{howard} and Gentzen-Mints-Zucker \cite{gmz} have as their deep content the observation that intuitionistic proofs can be understood as \emph{patterns of equality} that relate differently named occurrences of the ``same'' variable. Such patterns can be enacted on other patterns in a dynamical process that we call computation. By now we have many useful ways of thinking about these patterns as static objects, but the mathematical theory of these patterns in motion, what Girard calls \emph{geometry of interaction} \cite{towards_goi}, is not yet as well-developed. 

This paper is the first of a series in which we bring a fresh perspective to this geometry of interaction, using algebraic geometry. Here we only consider the simplest kind of proof, multiplicative proof nets in linear logic, and their cut-elimination process. We assign to every multiplicative proof net $\pi$ (from here on out just \emph{proof net}) a polynomial ring $P_\pi$, in which the variables are occurrences of atomic propositions in the formulas labelling edges of $\pi$, and an ideal $I_\pi$ in this ring (Definition \ref{defn:ideal_proof}). The generating set $G_\pi$ for this ideal contains, in particular, for every $\ax$ or $\cut$ link in $\pi$ on an atomic proposition $A$
\begin{center}
			\begin{tabular}{c c}
				$
				\begin{tikzcd}[column sep = small, row sep = small]
					& \ax\arrow[dl,bend right, dash]\arrow[dr,bend left, dash]\\
					\neg A\arrow[d] & & A\arrow[d]\\
					\vdots & & \vdots
				\end{tikzcd}$
				&
				$
					\begin{tikzcd}[column sep = small, row sep = small]
						\vdots\arrow[d, dash] & & \vdots\arrow[d, dash]\\
						\neg A\arrow[dr,bend right] & & A\arrow[dl, bend left]\\
						& \cut
					\end{tikzcd}$
			\end{tabular}
		\end{center}
a polynomial $A' - A$ where $A, A'$ are two distinct copies of the same atomic proposition, one for each edge incident at the link (informally both links say $A = A$ and this equation is in $G_\pi$). For any reduction $\gamma$ of $\pi$ yielding another proof net $\pi'$ there is a natural inclusion $P_{\pi'} \subseteq P_\pi$ where the variables in $P_\pi$ not in $P_{\pi'}$ are atomic propositions on edges eliminated by $\gamma$. The relationship between $I_\pi$ and $I_{\pi'}$ is straightforward (Corollary \ref{cor:ideals_intersect})
\begin{equation}\label{eq:ideal_relation_intro}
I_{\pi'} = I_{\pi} \cap P_{\pi'}\,.
\end{equation}
If we think of $I_\pi$ as all consequences of the generating equations in $G_\pi$, then this says that the pattern of equalities between the occurrences of atomic propositions in $\pi'$ \emph{derived from} the equations in $G_\pi$ is the same as the pattern we would write down directly from the structure of $\pi'$. There is however an important subtlety here: the generating set $G_{\pi'}$ for $I_{\pi'}$ \emph{cannot} be derived from $G_\pi$ by simple intersection, as
\begin{equation}\label{eq:failure_G}
G_{\pi'} \neq G_{\pi} \cap P_{\pi'}\,.
\end{equation}
The difference between $G_\pi$ and $I_\pi$ is analogous to the difference between a set of axioms, and the set of all propositions which may be derived from those axioms. The reason for the lack of equality in \eqref{eq:failure_G} is easy to understand: if $\gamma$ eliminates a $\cut$ link as above then $A' - A \in G_\pi$, and if $A_1 - A', A - A_2 \in G_\pi$ are generators associated to other links in $\pi$ then
\[
A_1 - A_2 = (A_1 - A') + (A' - A) + (A - A_2) \in I_\pi
\]
and hence $A_1 - A_2 \in I_{\pi'} = I_\pi \cap P_\pi$. As we will see, this is one of the generators in $G_{\pi'}$ (since in the reduced proof net $A_1, A_2$ are incident at a single link). However, if we simply intersect $G_\pi$ with $P_\pi$ then we delete all three of the equations $A_1 - A', A' - A, A - A_2$ since each contains some eliminated variable, and the connection between $A_1$ and $A_2$ is lost.

In algebraic geometry, \emph{Elimination Theory} \cite[Ch. 3]{Grobner} provides a general framework for eliminating variables from systems of polynomial equations, in the sense that $A', A$ have been eliminated from $A_1 - A', A' - A, A - A_2$ to obtain $A_1 - A_2$. In our present circumstance the theory says the following:
\begin{itemize}
\item[(1)] Choose a lexicographic monomial order $<_\gamma$ for $P_\pi$ in which the variables eliminated by the reduction $\gamma$ (e.g. $A', A$) are large.
\item[(2)] Compute a Gröbner basis for $I_\pi$ from $G_\pi$ using this monomial order.
\item[(3)] Intersect the resulting Gröbner basis with $P_{\pi'}$.
\end{itemize}
The main theorem of Elimination Theory (recalled as Theorem \ref{thm:elimination} here) says that the intersection in (3) is a Gröbner basis for $I_{\pi'}$. We prove in Theorem \ref{thm:elimination_ours} that, if the monomial order is chosen carefully and a particular form of the Buchberger algorithm for computing Gröbner bases is used in (2), then (3) computes $G_{\pi'}$ (which is a Gröbner basis for $I_{\pi'}$ with respect to a monomial order determined by the ordering of atomic propositions along persistent paths). In this sense the Buchberger algorithm can be viewed as an algebraic interpretation of cut-elimination for multiplicative proof nets.
\\

The goals of this paper are to set up a basic dictionary between
\begin{itemize}
\item multiplicative proofs nets and ideals
\item reduction sequences and monomial orders
\item cut-elimination and elimination
\end{itemize}
While there is some small novelty in the form of the Buchberger algorithm that makes this connection clearest (we call it Buchberger with Early Stopping) there is nothing deep here logically, algebraically or geometrically. Indeed the ideals $I_\pi$ do not have a very interesting geometry: the associated affine variety is just an intersection of pairwise diagonals. In subsequent papers in this series we introduce, on top of the foundations laid here, more interesting algebra and geometry (see Section \ref{section:towards_geometry} for some hints).

\section{Motivation: the canonical detour}\label{section:intro_example}

The reader wishing to dive straight into the details should skip to Section \ref{section:ideal_of_proof}. In this section we recall some of the concepts behind proof nets, following \cite[\S 11.2.2]{bs}. The deduction rules for intuitionistic linear natural deduction include the introduction and elimination rule for linear implication:

\begin{center}
\begin{tabular}{ >{\centering}m{6cm} >{\centering}m{6cm} >{\centering}m{0.5cm}}
	\AxiomC{$[A]^{\bold{1}}$}
	\noLine
	\UnaryInfC{$\vdots$}
	\noLine
	\UnaryInfC{$B$}
	\RightLabel{$(\multimap I)^{\bold{1}}$}
	\UnaryInfC{$A \multimap B$}
	\DisplayProof
	&
	\AxiomC{}
	\noLine
	\UnaryInfC{$\vdots$}
	\noLine
	\UnaryInfC{$A$}
	\AxiomC{}
	\noLine
	\UnaryInfC{$\vdots$}
	\noLine
	\UnaryInfC{$A \multimap B$}
	\RightLabel{$(\multimap E)$}
	\BinaryInfC{$B$}
	\DisplayProof
	&
	\tagarray{\label{natural_deduction}}
\end{tabular}
\end{center}
Among other criticisms of this sytem of natural deduction, Girard points to the need for a non-local ``gimmick'' (the de Bruijn index $\bold{1}$) to ``physically'' link the hypothesis $A$ to the introduction rule \cite[\S 11.2.1]{bs}. This non-locality is resolved in proof nets, in two steps: firstly, by giving up active hypotheses and having only conclusions, and secondly by bringing these hypotheses-as-conclusions down to meet the introduction rule that employs them. The first step involves introducing the $\ax$ and $\cut$ links

\begin{center}
	\begin{tabular}{>{\centering}m{6cm} >{\centering}m{6cm}}
		$
		\begin{tikzcd}[column sep = small, row sep = small]
			& \ax\arrow[dl,bend right, dash]\arrow[dr,bend left, dash]\\
			\neg A & & A
		\end{tikzcd}$
		&
		$
			\begin{tikzcd}[column sep = small, row sep = small]
				\neg A\arrow[dr,bend right] & & A\arrow[dl, bend left]\\
				& \cut
			\end{tikzcd}$
	\end{tabular}
\end{center}
which ``replace a hypothesis by a conclusion and vice versa''. The second step, in which $\neg A$ is brought down to meet the introduction rule, is made clear by the proof net interpreting the introduction rule in \eqref{natural_deduction}
\begin{center}
\begin{tabular}{>{\centering}m{12cm} >{\centering}m{0.5cm}}
$
\begin{tikzcd}[column sep = small, row sep = small]
	& \ax\\
	\neg A && A \\
	&& B \\
	& \multimap \\
	& {A \multimap B} \\
	& \vdots
	\arrow[from=5-2, to=6-2]
	\arrow[curve={height=-12pt}, from=3-3, to=4-2]
	\arrow[curve={height=22pt}, from=2-1, to=4-2]
	\arrow[curve={height=12pt}, no head, from=1-2, to=2-1]
	\arrow[curve={height=-12pt}, no head, from=1-2, to=2-3]
	\arrow[no head, from=4-2, to=5-2]
	\arrow[dotted, no head, from=2-3, to=3-3]
\end{tikzcd}
$
&
\tagarray{\label{eq:intro_deduction_b}}
\end{tabular}
\end{center}
To explain Girard's ``vice versa'' note that we can read the elimination rule of \eqref{natural_deduction} as converting a proof of the \emph{conclusion} $A \multimap B$ into a proof of $B$ on \emph{hypothesis} $A$, and this is achieved in proof nets via $\cut$, as shown in the following diagram:
\begin{center}
\begin{tabular}{>{\centering}m{10cm} >{\centering}m{1cm}}
$
\begin{tikzcd}[column sep = small, row sep = small]
	& \ax & & & \,\\
	& & \neg B & & A & \,\\
	B &&& \otimes &&\, \\
	&&& \neg B \otimes A && A \multimap B\\
	&&&& \cut
	\arrow[dotted, no head, from=3-6, to=4-6]
	\arrow[curve={height=22pt}, no head, from=1-2, to=3-1]
	\arrow[curve={height=-12pt}, no head, from=1-2, to=2-3]
	\arrow[curve={height=12pt}, from=2-3,to=3-4]
	\arrow[curve={height=-12pt}, from=2-5,to=3-4]
	\arrow[no head, from=3-4,to=4-4]
	\arrow[curve={height=12pt},from=4-4,to=5-5]
	\arrow[curve={height=-12pt},from=4-6,to=5-5]
	\arrow[dotted, no head, from=1-5, to=2-5]
\end{tikzcd}
$
&
\tagarray{\label{eq:intro_deduction_c}}
\end{tabular}
\end{center}
The reduction step in natural deduction is the elimination of the ``detour'' created when the introduction rule in \eqref{natural_deduction} appears as the proof of $A \multimap B$ in the elimination rule; this detour is eliminated to obtain a more straightforward proof of $B$ by combining the given proof of $A$ and the proof of $B$ on hypothesis $A$. The canonical example of a detour is shown below:
\begin{center}
\begin{tabular}{>{\centering}m{14cm} >{\centering}m{1cm}}
$
\begin{tikzcd}[column sep = small,row sep = small]
		& \ax &&&&&&& \ax \\
		&&&& \, &&& \neg A && A \\
		B && \neg B && A && & && B \\
		&&& \otimes &&& && \multimap \\
		&&& \neg B \otimes A &&&&& A \multimap B \\
		&&&&&& \cut
		\arrow[curve={height=25pt}, no head, from=1-2, to=3-1]
		\arrow[curve={height=-25pt}, no head, from=1-2, to=3-3]
		\arrow[dotted, no head, from=2-10, to=3-10]
		\arrow[curve={height=-12pt}, from=3-5, to=4-4]
		\arrow[curve={height=12pt}, from=3-3, to=4-4]
		\arrow[curve={height=12pt}, no head, from=1-9, to=2-8]
		\arrow[curve={height=-12pt}, no head, from=1-9, to=2-10]
		\arrow[curve={height=-12pt}, from=3-10, to=4-9]
		\arrow[curve={height=25pt}, from=2-8, to=4-9]
		\arrow[no head, from=4-9, to=5-9]
		\arrow[dotted, no head, from=2-5, to=3-5]
		\arrow[curve={height=-12pt}, from=5-9, to=6-7]
		\arrow[curve={height=12pt}, from=5-4, to=6-7]
		\arrow[no head, from=4-4, to=5-4]
	\end{tikzcd}
$
&
\tagarray{\label{eq:intro_deduction_d}}
\end{tabular}
\end{center}
%Here we are for expositional purposes avoiding $\parr$ by noting that $\neg (\neg B \otimes A) = \neg A \parr \neg \neg B = \neg A \parr B = A \multimap B$. %In a special case the complete cut-elimination process is exhibited in Example \ref{example:basic_cut_elim} below.
In the language of (linear) lambda terms the proof net \eqref{eq:intro_deduction_b} is $\lambda x \,.\, M$ for some term $M$ determined by the proof of $B$ from $A$ in ellipsis. If the proof of $A$ in ellipsis in \eqref{eq:intro_deduction_c} has term $N$ then the cut \eqref{eq:intro_deduction_d} corresponds to $((\lambda x \,.\, M) N)$ reducing to $M[ x := N ]$ with the ``output'' routed through the $\cut$ and $\otimes$ links to the conclusion $B$ on the left. We can imagine a path (a more refined version of which we will later call a \emph{persistent path}) from the proof of $A$ down through the $\otimes$ link and $\cut$ links, up through the $\multimap$ link and around to the left through $\neg A$ to the $\ax$ link, then back down through the proof of $B$ from $A$ (the one with term $M$) through the $\multimap$ and $\cut$ links, then up through $\otimes$ and the $\ax$ for $B$ until it exits the conclusion $B$. Along this path ``nothing happens'' except when we pass through the proof of $B$ from $A$, and we will get the same result if we ``straighten out'' the rest of the proof net leaving just this interesting part \emph{with the proof of $A$ feeding into it}. This is the result of cut-elimination for this proof net.

Let us sketch the algebro-geometric point of view on this situation. We first distinguish every occurrence of the formulas $A,B$ in the proof net above by giving it an index\footnote{These indices are not a form of de Brujin index because they only matter locally, as we will explain.}
\begin{center}
$
\begin{tikzcd}[column sep = small,row sep = small]
		& \ax &&&&&&& \ax \\
		&&&& \, &&& \neg A_1 && A_2 \\
		B_1 && \neg B_2 && A_3 && & && B_3 \\
		&&& \otimes &&& && \multimap \\
		&&& \neg B_4 \otimes A_4 &&&&& A_5 \multimap B_5 \\
		&&&&&& \cut
		\arrow[curve={height=25pt}, no head, from=1-2, to=3-1]
		\arrow[curve={height=-25pt}, no head, from=1-2, to=3-3]
		\arrow[dotted, no head, from=2-10, to=3-10]
		\arrow[curve={height=-12pt}, from=3-5, to=4-4]
		\arrow[curve={height=12pt}, from=3-3, to=4-4]
		\arrow[curve={height=12pt}, no head, from=1-9, to=2-8]
		\arrow[curve={height=-12pt}, no head, from=1-9, to=2-10]
		\arrow[curve={height=-12pt}, from=3-10, to=4-9]
		\arrow[curve={height=25pt}, from=2-8, to=4-9]
		\arrow[no head, from=4-9, to=5-9]
		\arrow[dotted, no head, from=2-5, to=3-5]
		\arrow[curve={height=-12pt}, from=5-9, to=6-7]
		\arrow[curve={height=12pt}, from=5-4, to=6-7]
		\arrow[no head, from=4-4, to=5-4]
	\end{tikzcd}
$
\end{center}
Suppose the atomic propositions occurring in $A$ are $X_1,\ldots,X_n$ and label the occurrence of $X_j$ in $A_i$ by $X^{(i)}_j$. Similarly let $Y_1,\ldots,Y_m$ be the atomic propositions in $B$ and write $Y^{(i)}_j$ for the occurrence of $Y_j$ in $B_i$. We let $\bold{X}^{(i)}$ (resp. $\bold{Y}^{(i)}$) stand for all the $X$-variables (resp. $Y$-variables) with superscript $i$. Then associated to the path described above through the proof net are the identifications
\begin{equation}
\bold{X}^{(3)} = \bold{X}^{(4)} = \bold{X}^{(5)} = \bold{X}^{(1)} = \bold{X}^{(2)}
\end{equation}
leading to the ``input'' of the proof $B$ of $A$, and the identifications
\begin{equation}
\bold{Y}^{(3)} = \bold{Y}^{(5)} = \bold{Y}^{(4)} = \bold{Y}^{(2)} = \bold{Y}^{(1)}
\end{equation}
leading from the ``output'' back through the proof net to the conclusion $B$. Here $\bold{X}^{(i)} = \bold{X}^{(i')}$ stands for a sequence of equations $X^{(i)}_j = X^{(i')}_j$ for $1 \le j \le n$. We have grouped these equations by how they appear on the path from $A$ to $B$ through the proof net, but we can also group them by link
\begin{align}
\ax_B &\quad \bold{Y}^{(1)} = \bold{Y}^{(2)} \label{eq:equationset_1}\\
\ax_A &\quad \bold{X}^{(1)} = \bold{X}^{(2)} \label{eq:equationset_2}\\
(\otimes) &\quad \bold{X}^{(3)} = \bold{X}^{(4)}, \bold{Y}^{(2)} = \bold{Y}^{(4)} \label{eq:equationset_3}\\
(\multimap) &\quad \bold{Y}^{(3)} = \bold{Y}^{(5)}, \bold{X}^{(1)} = \bold{X}^{(5)} \label{eq:equationset_4}\\
\cut &\quad \bold{X}^{(4)} = \bold{X}^{(5)}, \bold{Y}^{(4)} = \bold{Y}^{(5)}\,.\label{eq:equationset_5}
\end{align}
For some base field $k$ we consider the polynomial ring $P$ generated by all the atomic propositions
\begin{equation}
P = k\big[ \bold{X}^{(1)}, \ldots, \bold{X}^{(5)}, \bold{Y}^{(1)}, \ldots, \bold{Y}^{(5)}]
\end{equation}
and to the sets of equations \eqref{eq:equationset_1}-\eqref{eq:equationset_5} we associate the ideals (one for each link)
\begin{align}
I_{\ax_B} &= \langle\bold{Y}^{(1)} - \bold{Y}^{(2)}\rangle = \langle Y^{(1)}_1 - Y^{(2)}_1, \ldots, Y^{(1)}_m - Y^{(2)}_m\rangle \label{eq:ideal_1}\\
I_{\ax_A} &= \langle \bold{X}^{(1)} - \bold{X}^{(2)} \rangle \label{eq:ideal_2}\\
I_{\otimes} &= \langle \bold{X}^{(3)} - \bold{X}^{(4)}, \bold{Y}^{(2)} - \bold{Y}^{(4)} \rangle \label{eq:ideal_3}\\
I_{\multimap} &= \langle \bold{Y}^{(3)} - \bold{Y}^{(5)}, \bold{X}^{(1)} - \bold{X}^{(5)} \rangle \label{eq:ideal_4}\\
I_{\cut} &= \langle \bold{X}^{(4)} - \bold{X}^{(5)}, \bold{Y}^{(4)} - \bold{Y}^{(5)} \rangle\,. \label{eq:ideal_5}
\end{align}
Assume that somehow ideals
\begin{equation}
\mathfrak{n} \subseteq k[\bold{X}^{(3)}]\,, \qquad \mathfrak{m} \subseteq k[\bold{X}^{(2)}, \bold{Y}^{(3)}]
\end{equation}
have been assigned to the proofs of $A$, and of $B$ from $A$, in ellipsis. The \emph{ideal of the proof net} \eqref{eq:intro_deduction_d} is (see Section \ref{section:ideal_of_proof} for the general definition)
\begin{equation}
I = I_{\ax_B} + I_{\ax_A} + I_{\otimes} + I_{\multimap} + I_{\cut} + \mathfrak{m} + \mathfrak{n} \subseteq P\,.
\end{equation}
This ideal represents the pattern of equality of the proof net in terms of the variables occurring on every edge. When it comes to the interaction of this proof net with other nets under cut-elimination, this pattern in the ``interior'' only matters insofar as it implicitly determines a pattern of equality between the variables in $\bold{X}^{(3)}$ and $\bold{Y}^{(1)}$, which live on the ``boundary'' of the proof net. Computation is the process of making this \emph{implicit} pattern in the interior \emph{explicit} on the boundary (Section \ref{section:main_theorem}).

The pattern of equalities on the boundary is the set of equations between the variables $\bold{X}^{(3)},\bold{Y}^{(1)}$ which are implied by the equations between the variables in the interior. These equations form an ideal $J \subseteq k[\bold{X}^{(3)},\bold{Y}^{(1)}]$ and to say that the equations in $J$ follow from those in $I$ is to say that under the inclusion
\[
\inc: k[\bold{X}^{(3)},\bold{Y}^{(1)}] \lto P
\]
we have $\inc(J) \subseteq I$. To say that \emph{every} equation between the variables $\bold{X}^{(3)},\bold{Y}^{(1)}$ implied by the equations in $I$ is in $J$ is to say further that
\begin{equation}
J = I \cap k[\bold{X}^{(3)},\bold{Y}^{(1)}]\,.
\end{equation}
So what is a generating set for this ideal? With a little computation using the above equations it is easy to see that $J = \mathfrak{n} + \mathfrak{m}'$ is the sum of $\mathfrak{n}$ with the ideal $\mathfrak{m}'$ obtained from $\mathfrak{m}$ by replacing every generator $f(\bold{X}^{(2)}, \bold{Y}^{(3)})$ of this ideal with the polynomial $f(\bold{X}^{(3)}, \bold{Y}^{(1)})$ obtained by substituting $X^{(3)}_i$ for $X^{(2)}_i$ in $f$ for all $1 \le i \le n$ and $Y^{(1)}_j$ for $Y^{(3)}_j$ for all $1 \le j \le m$. This parallels the substitution $M[ x := N ]$.

In this paper we treat such substitution processes systematically from the point of view of Elimination Theory, as instances of the Buchberger algorithm for computing a Gröbner basis of $J$ from $I$.

%\begin{align*}
%f(X^{(3)}) = f(X^{(2)}) + \sum_{r \ge 1} \frac{1}{r!} \frac{\partial^r}{\partial X^r} f ( X^{(3)} - X^{(2)})^r\,.
%\end{align*}
% Since $X^{(3)} - X^{(2)}$ belongs to the ideal. This suggests that the problem of deriving $J$ can be reduced to the problem of \emph{dividing} $X^{(3)} - X^{(2)}$ into $f$. Since $\varphi(f)$ is the result of substitution and in a sense the only ``computation'' that happens, from this point of view \emph{finding the remainder under Euclidean division} is computation. Euclid's algorithm is the universal algorithm. Except that division is not that straightforward in multiple variables, so we need a Gröbner basis.

%\begin{remark} Associated to the $k$-algebra morphism $\iota$ is a restriction of scalars functor $\iota_*$ from the category of $P$-modules to the category of modules over $P_\partial := k[\bold{X}^{(3)},\bold{Y}^{(1)}]$, which sends a module $M$ to the same abelian group with the action $r . m = \iota(r) . m$. Since every variable in the interior of the proof net (those other than $\bold{X}^{(3)},\bold{Y}^{(1)}$) is equal via the relations \eqref{eq:equationset_1}-\eqref{eq:equationset_5} to one of the boundary variables we have an isomorphism of $P_\partial$-modules $P_\partial / J \cong \iota_*(P/I)$. In this way the cut-elimination of multiplicative proof nets is related to the problem of computing a presentation of the pushforward $\iota_*(P/I)$.
%\end{remark}

\section{The Ideal of a Proof}\label{section:ideal_of_proof}

To any proof structure $\pi$ we will associate a polynomial ring $P_\pi$. This is the universal algebraic object in which we can add and multiply the atomic propositions in the formulas adorning the edges of the proof structure (noting that each occurrence gets its own copy in the polynomial ring) and multiply them by scalars from some fixed base field $k$. The idea of making propositions into objects of arithmetic or algebra is due to Boole \cite{Boole}. The ring $P_\pi$ only reflects the set of occurrences of atomic propositions in $\pi$ not the structure of the proof; this structure is encoded in an ideal $I_\pi \subseteq P_\pi$ (Definition \ref{defn:ideal_proof}).

\subsection{Background}

We assume familiarity with linear logic, for an introduction see \cite{proofstypes, Laurent, Troiani, murfet_ll}.

\begin{defn}\label{def:oriented_atoms} There is an infinite set of \emph{unoriented atoms} $X,Y,Z,...$ and an \emph{oriented atom} (or \emph{atomic proposition}) is a pair $(X,+)$ or $(X,-)$ where $X$ is an unoriented atom. Let $\call{A}$ denote the set of oriented atoms.
\end{defn}

For $x \in \{ +, - \}$ we write $\bar{x}$ for the negation, so $\bar{+} = -, \bar{-} = +$. 

	\begin{defn}\label{def:formulas} The set of \emph{pre-formulas} is defined as follows:
		\begin{itemize}
			\item Any atomic proposition is a preformula.
			\item If $A,B$ are pre-formulas then so are $A \otimes B$, $A \parr B$.
			\item If $A$ is a pre-formula then so is $\neg A$.
		\end{itemize}
		The set of \emph{formulas} is the quotient of the set of pre-formulas by the equivalence relation $=$ generated, for arbitrary formulas $A,B$ and unoriented atom $X$, by
		\[
			\neg (A \otimes B) = \neg B \parr \neg A,\qquad \neg (A \parr B) = \neg B \otimes \neg A,\qquad \neg (X, x) = (X, \bar{x})
		\]
		 Let $\call{F}$ denote the set of formulas.
	\end{defn}

	The following easy lemma is left to the reader:
	
	\begin{lemma}
		For all formulas $A$ we have $\neg \neg A = A$.
	\end{lemma}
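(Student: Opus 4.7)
The plan is to proceed by structural induction on a pre-formula representative of $A$. Before starting the induction, I would note that the three generating relations of the equivalence relation are all of the form $\neg(\text{something}) = (\text{something})$, and in each relation replacing the outermost $\neg$ on either side by two $\neg$'s yields the same equivalence class; this gives well-definedness of $\neg$ on the quotient $\call{F}$, which is what lets the inductive hypothesis be applied underneath a $\neg$.

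For the base case, $A = (X,x)$ is an atomic proposition. Applying the third defining relation twice gives
\[
\neg\neg(X,x) = \neg(X,\bar x) = (X,\bar{\bar x}) = (X,x),
\]
using the trivial observation that $\bar{\bar x} = x$ for $x \in \{+,-\}$.

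For the inductive step there are three cases. If $A = B \otimes C$, then the first defining relation gives $\neg A = \neg C \parr \neg B$, and the second defining relation (with $\neg C, \neg B$ in place of $A, B$) gives
\[
\neg\neg A = \neg(\neg C \parr \neg B) = \neg\neg B \otimes \neg\neg C = B \otimes C = A,
\]
where the penultimate equality is by the induction hypothesis applied to the strict subformulas $B$ and $C$ (together with well-definedness of $\otimes$ on the quotient). The case $A = B \parr C$ is symmetric, using the two relations in the opposite order. Finally, if $A = \neg B$, then by induction $\neg\neg B = B$ as formulas, and applying $\neg$ to both sides (well-defined on $\call{F}$) yields $\neg\neg\neg B = \neg B$, i.e.\ $\neg\neg A = A$.

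The only subtle point is the well-definedness of $\neg$ on the quotient, since the induction must be applied inside the scope of a $\neg$ in the $A = \neg B$ case; this is the step I would check most carefully, but it is routine since the defining relations are already stated in a form compatible with prepending a $\neg$.
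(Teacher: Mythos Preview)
Your proof is correct, and the paper itself simply leaves this lemma to the reader, so there is no alternative approach to compare against; structural induction on a pre-formula representative is the expected argument.

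One small remark on the point you flag as subtle: your sketch of why $\neg$ is well-defined on $\call{F}$ (``replacing the outermost $\neg$ by two $\neg$'s yields the same equivalence class'') does not quite close on its own, since for instance from $\neg(A\otimes B)=\neg B\parr\neg A$ you would need $\neg\neg(A\otimes B)=\neg(\neg B\parr\neg A)=\neg\neg A\otimes\neg\neg B$, and equating that last expression with $\neg\neg(A\otimes B)$ already presupposes the lemma. The cleaner justification is that the paper immediately afterward treats $\neg,\otimes,\parr$ as functions on $\call{F}$, so the equivalence relation in Definition~\ref{def:formulas} must be read as the \emph{congruence} generated by the listed identities, under which well-definedness of $\neg$ is automatic. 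With that reading, your induction goes through without further work.
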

%	\begin{proof}
%		We proceed by induction on the number $n$ given by the sum of the occurrences of $\otimes$ and $\parr$ in some representative of $A$, notice that this number does not depend on the choice of representative.
		
%		If $n = 0$ then $A = (X,+)$ or $A = (X,-)$. In either situation the fact that $\neg \neg A = A$ follows from the last two generators in \eqref{eq:negation} of the defining equivalence relation for formulas.
		
%		Now say $n > 0$ and  of the buch holds for all $k < n$. We either have $A = A_1 \otimes A_2$ or $A = A_1 \parr A_2$, each case is proved in a similar way, we show the deatils for when $A = A_1 \otimes A_2$.
%		\begin{equation}
%			\neg \neg A = \neg \neg (A_1 \otimes A_2) = \neg (\neg A_2 \parr \neg A_1) = \neg \neg A_1 \otimes \neg \neg A_2
%		\end{equation}
%	By the inductive hypothesis we have that $\neg \neg A_1 = A_1$ and $\neg \neg A_2 = A_2$. It thus follows that $\neg \neg A = A$.
%	\end{proof}
%	The structure of the formulas can be organised into a monoid when a formula is considered as a sequence of its constituent oriented atomic propositions. We now make this precise.
	\begin{defn}
		Let $\call{A}^\ast = \bigcup_{n \geq 0}\call{A}^n$ be the set of sequences of oriented atoms of length $\geq 0$. We define an involution $r$ on $\call{A}^\ast$ as follows:
		\begin{align*}
			r: \call{A}^\ast &\lto \call{A}^\ast\\
			\big((X_1,x_1),...,(X_n,x_n)\big) &\longmapsto \big((X_n,\bar{x}_n),...,(X_1,\bar{x}_1)\big)
		\end{align*}
For the empty string $\emptyset \in \call{A}^\ast$ we define $r(\emptyset) = \emptyset$.
% Given two sequences $(\underline{a_1},...,\underline{a_n}),(\underline{b_1},...,\underline{b_m})$ both in $\call{A}^\ast$ we denote by $(\underline{a_1},...,\underline{a_n})\cdot(\underline{b_1},...,\underline{b_m})$ the concatenation $(\underline{a_1},...,\underline{a_n},\underline{b_1},...,\underline{b_m})$.
	\end{defn}	
	
	The set $\call{A}^\ast$ is a monoid under concatenation $c: \call{A}^\ast \times \call{A}^\ast \lto \call{A}^\ast$ with identity $\emptyset$.
	
	\begin{defn}
		We denote by $\otimes: \call{F} \times \call{F} \lto \call{F}$ the function which maps a pair of formulas $(A,B)$ to the formula $A \otimes B$. Similarly, $\parr: \call{F} \times \call{F} \lto \call{F}$ denotes the function such that $\parr(A,B) = A \parr B$ and $\neg : \call{F} \lto \call{F}$ denotes the function such that $\neg(A) = \neg A$. We denote by $\operatorname{inc}: \call{A} \lto \call{F}$ the map which sends an oriented atom $(X,x)$ to itself $(X,x)$, and lastly we denote by $\operatorname{\iota}: \call{A} \lto \call{A}^\ast$ the function which maps an oriented atom $(X,x)$ to the sequence consisting only of $(X,x)$.
	\end{defn}
	
	\begin{lemma}
		There is a unique map $a: \call{F} \lto \call{A}^\ast$ making the following diagrams commute
		\begin{equation}
			\begin{tikzcd}
			\call{F} \times \call{F}\arrow[r,"{a \times a}"] \arrow[d,swap,"{\otimes}"]& \call{A}^\ast \times \call{A}^\ast\arrow[d,"{c}"]\\
			\call{F}\arrow[r,"{a}"] & \call{A}^\ast
			\end{tikzcd}
			\qquad
			\begin{tikzcd}
			\call{F} \times \call{F}\arrow[r,"{a \times a}"]\arrow[d,swap,"{\parr}"] & \call{A}^\ast \times \call{A}^\ast\arrow[d,"{c}"]\\
			\call{F}\arrow[r,"{a}"] & \call{A}^\ast
			\end{tikzcd}
		\end{equation}
		\begin{equation}
			\begin{tikzcd}
			\call{F}\arrow[r,"{a}"]\arrow[d,swap,"{\neg}"] & \call{A}^\ast\arrow[d,swap,"{r}"]\\
			\call{F}\arrow[r,"{a}"] & \call{A}^\ast
			\end{tikzcd}
			\qquad
			\begin{tikzcd}
			\call{A}\arrow[r,"{\operatorname{inc}}"]\arrow[dr,swap,"{\iota}"] & \call{F}\arrow[d,"{a}"]\\
			& \call{A}^\ast
			\end{tikzcd}
		\end{equation}
	\end{lemma}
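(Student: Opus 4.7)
The plan is to build $a$ by structural recursion on pre-formulas and then show it descends to the quotient $\call{F}$. Concretely, define $\tilde{a}$ on pre-formulas by the clauses $\tilde{a}((X,x)) = \iota(X,x)$, $\tilde{a}(A \otimes B) = c(\tilde{a}(A), \tilde{a}(B))$, $\tilde{a}(A \parr B) = c(\tilde{a}(A), \tilde{a}(B))$, and $\tilde{a}(\neg A) = r(\tilde{a}(A))$. This is well-defined by recursion on the inductive definition of pre-formulas, and the four commutative squares are satisfied by construction at the level of pre-formulas.

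The next step, and the main thing that requires work, is to show $\tilde{a}$ respects the equivalence relation $=$ from Definition \ref{def:formulas}, so that it factors through a map $a: \call{F} \to \call{A}^\ast$. For this I first establish the auxiliary fact that $r$ is an anti-homomorphism for concatenation, i.e.\ $r(c(u,v)) = c(r(v), r(u))$ for all $u, v \in \call{A}^\ast$; this is a direct calculation from the definition of $r$. With this in hand, I check the three generating relations: for $\neg(A \otimes B) = \neg B \parr \neg A$ we compute $\tilde{a}(\neg(A \otimes B)) = r(c(\tilde{a}(A),\tilde{a}(B))) = c(r(\tilde{a}(B)), r(\tilde{a}(A))) = c(\tilde{a}(\neg B), \tilde{a}(\neg A)) = \tilde{a}(\neg B \parr \neg A)$; similarly for the $\parr$ case; and for $\neg(X,x) = (X,\bar{x})$ both sides evaluate to the length-one sequence $((X,\bar{x}))$. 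Since $=$ is the \emph{equivalence relation generated} by these identities (inside the free algebra of pre-formulas under $\otimes, \parr, \neg$), I also need congruence: if $\tilde{a}(A) = \tilde{a}(A')$ and $\tilde{a}(B) = \tilde{a}(B')$ then $\tilde{a}(A \otimes B) = \tilde{a}(A' \otimes B')$, etc. But this is immediate because $\tilde{a}$ is defined on each constructor as a function of its recursive values. Putting these together, $\tilde{a}$ descends to a well-defined $a: \call{F} \to \call{A}^\ast$, and the four diagrams commute because they commuted at the pre-formula level and all maps involved ($\otimes, \parr, \neg, \operatorname{inc}$) are defined through representatives.

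For uniqueness, suppose $a': \call{F} \to \call{A}^\ast$ also makes the four diagrams commute. Then for any pre-formula representative, the four clauses force $a'$ to agree with $\tilde{a}$: the square involving $\operatorname{inc}$ and $\iota$ pins down $a'$ on atomic propositions, and the squares for $\otimes, \parr, \neg$ recursively pin down its values on compound formulas. Hence $a' = a$.

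The only mildly delicate step is the anti-homomorphism property of $r$ (used to handle De~Morgan), which I expect is what the authors refer to as ``easy'' in the adjacent unproved lemma $\neg\neg A = A$; none of the steps should be deep, and the bulk of the argument is bookkeeping that the structural recursion on pre-formulas is compatible with the generating relations on $\call{F}$.
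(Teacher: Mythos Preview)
Your proposal is correct and is precisely the natural structural-recursion argument the authors have in mind; the paper's own proof is simply ``Left to the reader.'' Your treatment of the well-definedness on the quotient (via the anti-homomorphism property $r(c(u,v)) = c(r(v), r(u))$ and congruence) is exactly the bookkeeping that needs to be done, and nothing more is required.
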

	\begin{proof}
	Left to the reader.
	\end{proof}
	
	\begin{defn}\label{def:seq_set}
		Let $A$ be a formula. The \emph{sequence of oriented atoms} of $A$ is
		\[
		a(A) = (X_1,x_1),\ldots,(X_n,x_n)
		\]
		as defined by the previous lemma. The \emph{sequence of unoriented atoms} of $A$ is $X_1,...,X_n$ and the \emph{set of unoriented atoms} of $A$ is the disjoint union $U_A = \lbrace X_1 \rbrace \coprod \ldots \coprod \lbrace X_n \rbrace$.
	\end{defn}
%	\begin{remark}
%		The sequence of oriented atoms of a formula $A$ will be used when we wish to remember the order of the occurrences of the unoriented atoms in $A$. On the other hand, the \emph{set} of unoriented atoms forgets this structure. To put the point explicitly: neither the order from left to right of the sets $\lbrace X_i\rbrace$ nor the integer subscripts $i$ in the definition of the set of unoriented atoms of $A$ in Definition \ref{def:seq_set} imply an order to these unoriented atoms.
%	\end{remark}
	
	Our definition of \emph{multiplicative proof structures} follows that of \cite{Laurent} but with edges labelled by formulas in the sense of Definition \ref{def:formulas}. We remind the reader of the notion of a multigraph (a graph with multiple edges between vertices).
	
	\begin{defn}
		A \emph{directed multigraph} is a triple $(V,E,\varphi)$ where:
		\begin{itemize}
			\item $V$ is a set of \emph{vertices}, or \emph{nodes}.
			\item $E$ is a set of \emph{edges}.
			\item $\Gamma: E \lto \lbrace (x,y) \mid x, y \in V\rbrace$ is a function from the set of edges to the set of ordered pairs of vertices.
		\end{itemize}
		For $e \in E$, the first element of $\Gamma(e)$ is the \emph{source} and the second element is the \emph{target}.
	\end{defn}

	\begin{defn}
		A \emph{proof structure} is a directed multigraph with edges labelled by formulas and with vertices labelled by $\ax, \cut, \otimes, \parr$ or $\operatorname{c}$. The incoming edges of a vertex are called its \emph{premises}, the outgoing edges are its \emph{conclusions}. Proof structures are required to adhere to the following conditions:
		\begin{itemize}
			\item Each vertex labelled $\ax$ has exactly two conclusions and no premise, the conclusions are labelled $A$ and $\neg A$ for some $A$. We call this an \emph{axiom link}.
			\item Each vertex labelled $\cut$ has exactly two premises and no conclusion, where the premises are labelled $A$ and $\neg A$ for some $A$. We call this a \emph{cut link}.
			\item Each vertex labelled $\otimes$ has exactly two premises and one conclusion. The premises are ordered, the smallest one is called the \emph{left} premise of the vertex, the biggest one is called the \emph{right} premise. The left premise is labelled $A$, the right premise is labelled $B$ and the conclusion is labelled $A \otimes B$, for some $A,B$. We call this a \emph{tensor link}.
			\item Each vertex labelled $\parr$ has exactly two ordered premises and one conclusion. The left premise is labelled $A$, the right premise is labelled $B$ and the conclusion is labelled $A \parr B$, for some $A,B$. We call this a \emph{par link}.
			\item Each vertex labelled $\operatorname{c}$ has exactly one premise and no conclusion. Such a premise of a vertex labelled $\operatorname{c}$ is called a \emph{conclusion} of the proof structure.
		\end{itemize}
A \emph{link} is any of an axiom link, tensor link, par link, or cut link. Conclusions are not links. %An \emph{axiom link} of $\pi$ is a subgraph consisting of a vertex labelled $\ax$ along with its conclusions labelled $A, \neg A$ for some formula $A$. A \emph{tensor link} of $\pi$ consists of a node labelled $\otimes$ along with its premises and conclusion labelled respectively $A,B, A \otimes B$ for some pair of formulas $(A,B)$. A \emph{par link} consists of a node labelled $\parr$ along with its premises and conclusion labelled respectively $A,B, A \parr B$ for some pair of formulas $(A,B)$. A $\cut$ link consists of a node labelled $\cut$ along with its premises labelled $A, \neg A$ for some formula $A$. 
	\end{defn}
	
%	 A proof structure does not admit loops, but it may contain cycles.
	
\begin{defn} A \emph{proof net} is a proof structure which is the image under the translation map of a sequent calculus proof in multiplicative linear logic, see \cite[Definition 1.0.12]{Troiani}.
\end{defn}

\begin{example}\label{example:smallest_pf}
	The smallest proof net with a single conclusion is
	% https://q.uiver.app/?q=WzAsNixbMSwwLCJcXGF4Il0sWzAsMSwiXFxuZWcgVSJdLFsyLDEsIlUiXSxbMSwyLCJcXHBhcnIiXSxbMSwzLCJcXG5lZyBVIFxccGFyciBVIl0sWzEsNCwiXFxvcGVyYXRvcm5hbWV7Y30iXSxbMCwyLCIiLDAseyJjdXJ2ZSI6LTIsInN0eWxlIjp7ImhlYWQiOnsibmFtZSI6Im5vbmUifX19XSxbMCwxLCIiLDIseyJjdXJ2ZSI6Miwic3R5bGUiOnsiaGVhZCI6eyJuYW1lIjoibm9uZSJ9fX1dLFsyLDMsIiIsMCx7ImN1cnZlIjotMn1dLFsxLDMsIiIsMix7ImN1cnZlIjoyfV0sWzQsNV0sWzMsNCwiIiwyLHsic3R5bGUiOnsiaGVhZCI6eyJuYW1lIjoibm9uZSJ9fX1dXQ==
	\[\begin{tikzcd}[column sep = small, row sep = small]
		& \ax \\
		{\neg A} && A \\
		& \parr \\
		& {\neg A \parr A} \\
		& {\operatorname{c}}
		\arrow[curve={height=-12pt}, no head, from=1-2, to=2-3]
		\arrow[curve={height=12pt}, no head, from=1-2, to=2-1]
		\arrow[curve={height=-12pt}, from=2-3, to=3-2]
		\arrow[curve={height=12pt}, from=2-1, to=3-2]
		\arrow[from=4-2, to=5-2]
		\arrow[no head, from=3-2, to=4-2]
	\end{tikzcd}\]
This is the translation of the following sequent calculus proof.
\begin{prooftree}
	\AxiomC{}
	\RightLabel{$\ax$}
	\UnaryInfC{$\vdash \neg A, A$}
	\RightLabel{$(\parr)$}
	\UnaryInfC{$\vdash \neg A \parr A$}
	\end{prooftree}
\end{example}

\begin{example}
	As in Section \ref{section:intro_example} the following proof net is the translation of the linear lambda term $(\lambda x. x)x$.
	% https://q.uiver.app/?q=WzAsMTYsWzEsMCwiXFxheCJdLFswLDEsIlxcbmVnIFUiXSxbMiwxLCJVIl0sWzUsMCwiXFxheCJdLFs0LDEsIlxcbmVnIFUiXSxbNiwxLCJVIl0sWzMsMywiVSBcXG90aW1lcyBcXG5lZyBVIl0sWzcsMSwiXFxuZWcgVSJdLFs4LDAsIlxcYXgiXSxbOSwxLCJVIl0sWzgsMiwiXFxwYXJyIl0sWzgsMywiXFxuZWcgVSBcXHBhcnIgVSJdLFszLDIsIlxcb3RpbWVzIl0sWzUsNCwiXFxjdXQiXSxbMCwyLCJcXG9wZXJhdG9ybmFtZXtjfSJdLFs2LDIsIlxcb3BlcmF0b3JuYW1le2N9Il0sWzYsMTMsIiIsMCx7ImN1cnZlIjoyfV0sWzExLDEzLCIiLDIseyJjdXJ2ZSI6LTJ9XSxbMTAsMTEsIiIsMix7InN0eWxlIjp7ImhlYWQiOnsibmFtZSI6Im5vbmUifX19XSxbOSwxMCwiIiwyLHsiY3VydmUiOi0yfV0sWzcsMTAsIiIsMix7ImN1cnZlIjoyfV0sWzgsOSwiIiwyLHsiY3VydmUiOi0yLCJzdHlsZSI6eyJoZWFkIjp7Im5hbWUiOiJub25lIn19fV0sWzgsNywiIiwxLHsiY3VydmUiOjIsInN0eWxlIjp7ImhlYWQiOnsibmFtZSI6Im5vbmUifX19XSxbMCwyLCIiLDEseyJjdXJ2ZSI6LTIsInN0eWxlIjp7ImhlYWQiOnsibmFtZSI6Im5vbmUifX19XSxbMCwxLCIiLDEseyJjdXJ2ZSI6Miwic3R5bGUiOnsiaGVhZCI6eyJuYW1lIjoibm9uZSJ9fX1dLFsxLDE0XSxbMiwxMiwiIiwxLHsiY3VydmUiOjJ9XSxbNCwxMiwiIiwxLHsiY3VydmUiOi0yfV0sWzEyLDYsIiIsMSx7InN0eWxlIjp7ImhlYWQiOnsibmFtZSI6Im5vbmUifX19XSxbMyw0LCIiLDEseyJjdXJ2ZSI6Miwic3R5bGUiOnsiaGVhZCI6eyJuYW1lIjoibm9uZSJ9fX1dLFszLDUsIiIsMSx7ImN1cnZlIjotMiwic3R5bGUiOnsiaGVhZCI6eyJuYW1lIjoibm9uZSJ9fX1dLFs1LDE1XV0=
	\[\begin{tikzcd}[column sep = small, row sep = small]
		& \ax &&&& \ax &&& \ax \\
		{\neg U} && U && {\neg U} && U & {\neg U} && U \\
		{\operatorname{c}} &&& \otimes &&& {\operatorname{c}} && \parr \\
		&&& {U \otimes \neg U} &&&&& {\neg U \parr U} \\
		&&&&& \cut
		\arrow[curve={height=12pt}, from=4-4, to=5-6]
		\arrow[curve={height=-12pt}, from=4-9, to=5-6]
		\arrow[no head, from=3-9, to=4-9]
		\arrow[curve={height=-12pt}, from=2-10, to=3-9]
		\arrow[curve={height=12pt}, from=2-8, to=3-9]
		\arrow[curve={height=-12pt}, no head, from=1-9, to=2-10]
		\arrow[curve={height=12pt}, no head, from=1-9, to=2-8]
		\arrow[curve={height=-12pt}, no head, from=1-2, to=2-3]
		\arrow[curve={height=12pt}, no head, from=1-2, to=2-1]
		\arrow[from=2-1, to=3-1]
		\arrow[curve={height=12pt}, from=2-3, to=3-4]
		\arrow[curve={height=-12pt}, from=2-5, to=3-4]
		\arrow[no head, from=3-4, to=4-4]
		\arrow[curve={height=12pt}, no head, from=1-6, to=2-5]
		\arrow[curve={height=-12pt}, no head, from=1-6, to=2-7]
		\arrow[from=2-7, to=3-7]
	\end{tikzcd}\]
This is the translation of the following sequent style proof.
\begin{prooftree}
	\AxiomC{}
	\RightLabel{$\ax$}
	\UnaryInfC{$\vdash \neg U, U$}
	\AxiomC{}
	\RightLabel{$\ax$}
	\UnaryInfC{$\vdash \neg U, U$}
	\RightLabel{$(\otimes)$}
	\BinaryInfC{$\vdash \neg U, U \otimes \neg U, U$}
	\AxiomC{}
	\RightLabel{$\ax$}
	\UnaryInfC{$\vdash \neg U, U$}
	\RightLabel{$\parr$}
	\UnaryInfC{$\vdash \neg U \parr U$}
	\RightLabel{$\cut$}
	\BinaryInfC{$\vdash \neg U, U$}
	\end{prooftree}
\end{example}

\begin{example}\label{example:not_pf_net}
	The following is a proof structure which is \emph{not} a proof net.% https://q.uiver.app/?q=WzAsNCxbMSwwLCJcXGF4Il0sWzAsMSwiXFxuZWcgVSJdLFsyLDEsIlUiXSxbMSwyLCJcXGN1dCJdLFsxLDMsIiIsMCx7ImN1cnZlIjoyfV0sWzIsMywiIiwyLHsiY3VydmUiOi0yfV0sWzAsMSwiIiwwLHsiY3VydmUiOjIsInN0eWxlIjp7ImhlYWQiOnsibmFtZSI6Im5vbmUifX19XSxbMCwyLCIiLDIseyJjdXJ2ZSI6LTIsInN0eWxlIjp7ImhlYWQiOnsibmFtZSI6Im5vbmUifX19XV0=
	\[\begin{tikzcd}[column sep = small, row sep = small]
		& \ax \\
		{\neg A} && A \\
		& \cut
		\arrow[curve={height=12pt}, from=2-1, to=3-2]
		\arrow[curve={height=-12pt}, from=2-3, to=3-2]
		\arrow[curve={height=12pt}, no head, from=1-2, to=2-1]
		\arrow[curve={height=-12pt}, no head, from=1-2, to=2-3]
	\end{tikzcd}\]
Suppose the above proof structure was equal to $T(\pi)$ where $\pi$ is a sequent calculus proof. Then there exist sequent calculus proofs $\pi_1, \pi_2$ such that $\pi$ is given by a cut rule with $\pi_1$ on the left branch and $\pi_2$ on the right branch. Since sequent calculus proofs are binary trees with leaves labelled by instances of the Axiom rule, it follows that $\pi$ admits at least two instances of an Axiom rule, a contradiction.
\end{example}

\subsection{The polynomial ring}

	Let $k$ be a field.
	
	\begin{defn}
		The \emph{polynomial ring of a formula $A$} is the free commutative $k$-algebra $P_A$ on the set $U_A$ of unoriented atoms of $A$. 
	\end{defn}
	
	\begin{remark}\label{rmk:s'}
	Some technical remarks:
	\begin{itemize}
	\item The algebra $P_A$ does \emph{not} depend on an ordering of the unoriented atoms in $A$. For example, we can take as our concrete realisation of $P_A$ the symmetric algebra $\operatorname{Sym}(V_A)$ on the free vector space $V_A$ on the set $U_A$. 
	\item If we choose an ordering on $U_A$ so that $U_A = \{X_1,\ldots,X_n\}$ then $P_A \cong k[X_1,...,X_n]$. It will be convenient to present $P_A$ in this way in the following, but unless we explicitly introduce an ordering on $U_A$ no meaning should be associated to the indices $i$ in $X_i$.
	\item The oriented atom $(X,+)$ appears twice in $A = (X,+) \otimes (X,+)$, so the sequence of unoriented atoms of $A$ is $X,X$ and the \emph{set} of unoriented atoms is a disjoint union $\{ X, X' \} = \{ X \} \coprod \{ X \}$ inside which we can distinguish the two copies of $X$ (the precise nature of these copies is not important here). Hence $P_A \cong k[X, X']$.
%	\item It will not be unusual for us to consider tensor products $P_A \otimes P_B$ for formulas $A,B$ which contain a common unoriented atom $X$. If $S_1, S_2$ are the respective sets of unoriented atoms then there is a canonical isomorphism $P_A \otimes P_B \cong k[ S_1 \coprod S_2 ]$. The two copies of $X$ are distinguished on the left hand side by writing them as $X \otimes 1$ and $1 \otimes X$, and on the right hand side by virtue of being tagged as distinct inside the disjoint union.
	\end{itemize}
	\end{remark}
		
	\begin{defn} The \emph{set of unoriented atoms} of a proof structure $\pi$ is the disjoint union $U_\pi = \coprod_{e \in E} U_{A_e}$ where $E$ is the set of edges in $\pi$ and for each $e \in E$ we write $U_{A_e}$ for the set of unoriented atoms of the formula $A_e$ labelling $e$.
	\end{defn}
	
	\begin{defn}\label{def:polynomial_ring}
The \emph{polynomial ring of a proof structure $\pi$} is the free commutative $k$-algebra on the set $U_\pi$ of unoriented atoms of $\pi$.
	\end{defn}
	
The polynomial ring of a proof structure $\pi$ is the polynomial ring with a generator for each pair consisting of an edge $e$ in $\pi$ and an unoriented atom of the formula labelling $e$. If we choose an ordering on the edges $e$ of $\pi$ then we have an isomorphism
		\begin{equation}
			P_\pi = k[\coprod_{e \in E}U_{A_e}] \cong \bigotimes_{e \in E}P_{A_e}\,.
		\end{equation}
		
%\begin{defn}

%If $l$ is an axiom link with conclusions labelled $\neg A, A$ or a cut link with premises labelled $\neg A, A$ then the \emph{polynomial ring} of $l$ is $P_{A} \otimes P_{\neg A}$. If $l$ is a tensor link or par link with premises $A,B$ and conclusion $C$ then the \emph{polynomial ring} of $l$ is $P_A \otimes P_B \otimes P_{C}$.
%\end{defn}
	
	\subsection{The ideal of a link}
	
	The ideal generated by a set $S$ is denoted $\langle S \rangle$. For each type of link $l$ we define the \emph{generating set} $G_l$ and \emph{link ideal} $I_l = \langle G_l \rangle$ in the polynomial ring generated by the set of unoriented atoms of formulas labelling edges incident at the link. Below $(X_1,x_1),...,(X_n,x_n)$ is the sequence of oriented atoms of a formula $A$, and $(Y_1,y_1),...,(Y_m,y_m)$ is the sequence of oriented atoms of $B$. 
	
	\begin{defn}[Cut and Axiom]\label{ideal_axiomcut} If $l$ is one of the links
		 
		\begin{center}
			\begin{tabular}{c c}
				$
				\begin{tikzcd}[column sep = small, row sep = small]
					& \ax\arrow[dl,bend right, dash]\arrow[dr,bend left, dash]\\
					\neg A\arrow[d] & & A\arrow[d]\\
					\vdots & & \vdots
				\end{tikzcd}$
				&
				$
					\begin{tikzcd}[column sep = small, row sep = small]
						\vdots\arrow[d, dash] & & \vdots\arrow[d, dash]\\
						\neg A\arrow[dr,bend right] & & A\arrow[dl, bend left]\\
						& \cut
					\end{tikzcd}$
			\end{tabular}
		\end{center}
then in the polynomial ring generated by the disjoint union of unoriented atoms $U_A \coprod U_{\neg A}$
\begin{equation}\label{eq:G_both_ways}
	G_l = \big\{ X_i - X_i'\big\}_{i = 1}^n %\cup \big\{ X_i' - X_i \big\}_{i = 1}^n
	\end{equation}
where we write $X'_i$ for the elements of $U_{\neg A}$.
\end{defn}
% Technically speaking we can't really choose X_i over X_i' in any principled way, but just make some selection, it will not matter

%Note that since the conclusions (resp. premises) of a $\ax$ (resp. $\cut$) link are \emph{not} ordered according to our conventions, and the role of $A$ and $\neg A$ is symmetric, there is no principled way for us to distinguish at this stage between $X_i - X'_i$ and $X_i' - X_i$. Hence both are admitted as generators. This point will be revisited in Section \ref{section:elim_theory}.

\begin{defn}[Tensor and Par links]\label{ideal_axiompartensor} If $l$ is one of the links
		\begin{center}
			\begin{tabular}{ c c }
				
					% https://q.uiver.app/?q=WzAsNyxbMCwwLCJcXHZkb3RzIl0sWzAsMSwiQSJdLFsyLDEsIkIiXSxbMSwyLCJcXG90aW1lcyJdLFsxLDMsIkEgXFxvdGltZXMgQiJdLFsxLDQsIlxcdmRvdHMiXSxbMiwwLCJcXHZkb3RzIl0sWzQsNV0sWzIsMywiIiwwLHsiY3VydmUiOi0yfV0sWzEsMywiIiwyLHsiY3VydmUiOjJ9XSxbMyw0LCIiLDIseyJzdHlsZSI6eyJoZWFkIjp7Im5hbWUiOiJub25lIn19fV0sWzYsMiwiIiwwLHsic3R5bGUiOnsiaGVhZCI6eyJuYW1lIjoibm9uZSJ9fX1dLFswLDEsIiIsMix7InN0eWxlIjp7ImhlYWQiOnsibmFtZSI6Im5vbmUifX19XV0=
					$\begin{tikzcd}[column sep = small, row sep = small]
						\vdots && \vdots \\
						A && B \\
						& \otimes \\
						& {A \otimes B} \\
						& \vdots
						\arrow[from=4-2, to=5-2]
						\arrow[curve={height=-12pt}, from=2-3, to=3-2]
						\arrow[curve={height=12pt}, from=2-1, to=3-2]
						\arrow[no head, from=3-2, to=4-2]
						\arrow[no head, from=1-3, to=2-3]
						\arrow[no head, from=1-1, to=2-1]
					\end{tikzcd}$
					&
					% https://q.uiver.app/?q=WzAsNyxbMCwwLCJcXHZkb3RzIl0sWzAsMSwiQSJdLFsyLDEsIkIiXSxbMSwyLCJcXHBhcnIiXSxbMSwzLCJBIFxccGFyciBCIl0sWzEsNCwiXFx2ZG90cyJdLFsyLDAsIlxcdmRvdHMiXSxbNCw1XSxbMiwzLCIiLDAseyJjdXJ2ZSI6LTJ9XSxbMSwzLCIiLDIseyJjdXJ2ZSI6Mn1dLFszLDQsIiIsMix7InN0eWxlIjp7ImhlYWQiOnsibmFtZSI6Im5vbmUifX19XSxbNiwyLCIiLDAseyJzdHlsZSI6eyJoZWFkIjp7Im5hbWUiOiJub25lIn19fV0sWzAsMSwiIiwyLHsic3R5bGUiOnsiaGVhZCI6eyJuYW1lIjoibm9uZSJ9fX1dXQ==
					$\begin{tikzcd}[column sep = small, row sep = small]
						\vdots && \vdots \\
						A && B \\
						& \parr \\
						& {A \parr B} \\
						& \vdots
						\arrow[from=4-2, to=5-2]
						\arrow[curve={height=-12pt}, from=2-3, to=3-2]
						\arrow[curve={height=12pt}, from=2-1, to=3-2]
						\arrow[no head, from=3-2, to=4-2]
						\arrow[no head, from=1-3, to=2-3]
						\arrow[no head, from=1-1, to=2-1]
					\end{tikzcd}$
			\end{tabular}
		\end{center}
		then in polynomial ring generated by the disjoint union $U_A \coprod U_{B} \coprod U_{A \boxtimes B}$ (where $\boxtimes$ is either $\otimes$ or $\parr$)
		\begin{align*}
			G_l = \big\{ X_i - X_i'\big\}_{i = 1}^n \cup \big\{ Y_j - Y_j' \big\}_{j = 1}^m
		\end{align*}
		where $X'_i, Y'_j$ denote elements of $U_{A \boxtimes B}$.
		\end{defn}
	
	In Definition \ref{def:polynomial_ring} we defined the polynomial ring $P_\pi$ of a proof structure $\pi$. If $l$ is a link of $\pi$ then in the obvious way we can identify $G_l$ with a subset of $P_\pi$. %It is convenient to adopt the convention that $G_l = \emptyset$ and $I_l = 0$ when $l$ is a conclusion link.
	
\begin{defn} The \emph{ideal of a link $l$} in a proof structure $\pi$ is the ideal $I_l = \langle G_l \rangle \subseteq P_\pi$ generated by $G_l$ in $P_\pi$.
\end{defn}
		
	\begin{defn}\label{defn:ideal_proof} Let $\pi$ be a proof structure with set of links $\call{L}$. Then
	\begin{equation}
		G_\pi := \bigcup_{l \in \call{L}}G_l\,.
		\end{equation}
		The \emph{defining ideal} $I_\pi$ is the ideal in $P_\pi$ generated by $G_\pi$, or equivalently
		\begin{equation}
		I_\pi = \sum_{l \in \call{L}}I_l\,.
		\end{equation}
		The \emph{coordinate ring} of $\pi$ is the quotient $R_\pi = P_\pi/I_\pi$.
	\end{defn}
	
%	\begin{remark}
%		Any link coordinate ring $R_l$ is an algebra over the polynomial ring of the link and can be viewed as a bimodule.
%		\begin{center}
%		\begin{tabular}{ c  c }
%			\underline{Axiom/Cut} & $R_l$ is a $P_A$-$P_{\neg A}$-bimodule.\\
%			\underline{Tensor/Par} & $R_l$ is a $P_A \otimes P_B$-$P_{A l B}$-bimodule.
%		\end{tabular}
%		\end{center}
%	Associated to these bimodules are functors. For example, if $l$ is an axiom link with conclusions $A, \neg A$, then there is a functor $R_l \otimes \und{0.2}: \operatorname{Mod}P_A \lto \operatorname{Mod}P_{\neg A}$. This functor is naturally isomorphic to restriction of scalars along the ring isomorphism $P_A \lto P_{\neg A}$ sending $X_i$ to $X_i'$ (noting that $R_l$ is the coordinate ring of the graph of this function).
%	\end{remark}

% For an example computing $I_\pi$ and $R_\pi$ see Example \ref{ex:coord_ring} below. 

%For an example of a proof net along with its defining ideal and coordinate ring, see Example \ref{ex:coord_ring}.

%We will show in Proposition \ref{prop:induced_maps} that the coordinate ring $R_\pi$ is invariant under cut-elimination of multiplicative proof nets, and in this way we can reduce the problem of computing $R_\pi$ to the case where $\pi$ is cut-free; see Proposition \ref{prop:permutation}. For now we just give a few simple examples.

\begin{example} Consider the proof net $\pi$ of Example \ref{example:smallest_pf} in which we set $A$ to be the atomic proposition $A = (X,+)$
\begin{center}
\begin{tabular}{>{\centering}m{6cm} >{\centering}m{1cm}}
$
\begin{tikzcd}[column sep = small, row sep = small]
	& \ax\\
	\neg A && A \\
	& \parr \\
	& {\neg A \parr A} \\
	& \operatorname{c}
	\arrow[curve={height=-12pt}, from=2-3, to=3-2]
	\arrow[curve={height=12pt}, from=2-1, to=3-2]
	\arrow[curve={height=12pt}, no head, from=1-2, to=2-1]
	\arrow[curve={height=-12pt}, no head, from=1-2, to=2-3]
	\arrow[no head, from=3-2, to=4-2]
	\arrow[from=4-2, to=5-2]
\end{tikzcd}
$
&
\tagarray{\label{eq:proof_net_ident}}
\end{tabular}
\end{center}
The set of unoriented atoms of $\pi$ is $\{ X, X', X'', X'''\}$ where we associate $X$ to $A$, $X'$ to $\neg A$ and we let $X'', X'''$ be the sequence of unoriented atoms of $\neg A \parr A$. Then
\begin{align*}
G_{\ax} = \{ X - X' \}\, \qquad G_\parr = \{ X - X''', X' - X'' \}\,.
\end{align*}
Hence
\[
R_\pi = P_\pi / I_\pi = k[X, X', X'', X''']/(I_{\ax} + I_\parr) \cong k[X, X']/I_{\ax} \cong k[X]
\]
in which $X = X' = X'' = X'''$.
\end{example}

\begin{example}\label{ex:proof_net_not} Consider the proof structure $\rho$ of Example \ref{example:not_pf_net} (which is not a proof net) with $A = (X,+)$. Then $G_{\ax} = \{ X - X' \} = G_{\cut}$ so $R_\rho = k[X, X']/(X - X') \cong k[X]$.
\end{example}

\section{Reduction}

We have now associated to each proof structure $\pi$ a pair $(I_\pi, P_\pi)$ consisting of a polynomial ring $P_\pi$ generated by the unoriented atoms of $\pi$ and an ideal $I_\pi$ in this ring generated by differences $U - V$ where $U,V$ are ``the same'' variable on two different edges incident at a link in $\pi$. We now turn to the cut-elimination process for multiplicative proof nets and study how to think about a reduction $\pi \lto \pi'$ in terms of $(I_\pi, P_\pi), (I_{\pi'}, P_{\pi'})$.

	\begin{defn} An \emph{$a$-redex} in a proof structure $\pi$ is a subgraph of the form
		\begin{center}
			\begin{tabular}{ >{\centering}m{12cm} >{\centering}m{0.5cm}}
%				$
%				\begin{tikzcd}[column sep = small, row sep = small]
%					& \ax\arrow[dr,bend left, dash]\arrow[dl,bend right, dash] &&& v\arrow[d,dash]\\
%					\neg A\arrow[d] && A\arrow[dr,bend right] && \neg A\arrow[dl,bend left]\\
%				v'&&&\cut
%				\end{tikzcd}
%				$
%				&
				$
				\begin{tikzcd}[column sep = small, row sep = small]
					\vdots\arrow[d,dash] &&& \ax\arrow[dl, bend right, dash]\arrow[dr,dash, bend left]\\
					A\arrow[dr, bend right] && \neg A\arrow[dl,bend left] && A\arrow[d]\\
					& \cut &&& \vdots
				\end{tikzcd}
				$
				&
				\tagarray{\label{eq:a_redex}}
			\end{tabular}
		\end{center}
	\end{defn}
	
	\begin{defn} An \emph{$m$-redex} in a proof structure $\pi$ is a subgraph of the form
	\begin{center}
		\begin{tabular}{ >{\centering}m{12cm} >{\centering}m{0.5cm}}
			$
			% https://q.uiver.app/?q=WzAsMTMsWzAsMCwiXFx2ZG90cyJdLFswLDEsIkEiXSxbMSwyLCJcXG90aW1lcyJdLFsyLDEsIkIiXSxbMiwwLCJcXHZkb3RzIl0sWzEsMywiQSBcXG90aW1lcyBCIl0sWzMsNCwiXFxjdXQiXSxbNSwzLCJcXG5lZyBBIFxccGFyciBcXG5lZyBCIl0sWzUsMiwiXFxwYXJyIl0sWzQsMSwiXFxuZWcgQSJdLFs2LDEsIlxcbmVnIEIiXSxbNCwwLCJcXHZkb3RzIl0sWzYsMCwiXFx2ZG90cyJdLFsxMiwxMCwiIiwwLHsic3R5bGUiOnsiaGVhZCI6eyJuYW1lIjoibm9uZSJ9fX1dLFsxMCw4LCIiLDAseyJjdXJ2ZSI6LTJ9XSxbOSw4LCIiLDIseyJjdXJ2ZSI6Mn1dLFsxMSw5LCIiLDIseyJzdHlsZSI6eyJoZWFkIjp7Im5hbWUiOiJub25lIn19fV0sWzgsNywiIiwyLHsic3R5bGUiOnsiaGVhZCI6eyJuYW1lIjoibm9uZSJ9fX1dLFs3LDYsIiIsMix7ImN1cnZlIjotMn1dLFs1LDYsIiIsMCx7ImN1cnZlIjoyfV0sWzIsNSwiIiwwLHsic3R5bGUiOnsiaGVhZCI6eyJuYW1lIjoibm9uZSJ9fX1dLFszLDIsIiIsMCx7ImN1cnZlIjotMn1dLFsxLDIsIiIsMCx7ImN1cnZlIjoyfV0sWzQsMywiIiwwLHsic3R5bGUiOnsiaGVhZCI6eyJuYW1lIjoibm9uZSJ9fX1dLFswLDEsIiIsMCx7InN0eWxlIjp7ImhlYWQiOnsibmFtZSI6Im5vbmUifX19XV0=
			\begin{tikzcd}[column sep = small, row sep = small]
				\vdots && \vdots && \vdots && \vdots \\
				A && B && {\neg B} && {\neg A} \\
				& \otimes &&&& \parr \\
				& {A \otimes B} &&&& {\neg B \parr \neg A} \\
				&&& \cut
				\arrow[no head, from=1-7, to=2-7]
				\arrow[curve={height=-12pt}, from=2-7, to=3-6]
				\arrow[curve={height=12pt}, from=2-5, to=3-6]
				\arrow[no head, from=1-5, to=2-5]
				\arrow[no head, from=3-6, to=4-6]
				\arrow[curve={height=-12pt}, from=4-6, to=5-4]
				\arrow[curve={height=12pt}, from=4-2, to=5-4]
				\arrow[no head, from=3-2, to=4-2]
				\arrow[curve={height=-12pt}, from=2-3, to=3-2]
				\arrow[curve={height=12pt}, from=2-1, to=3-2]
				\arrow[no head, from=1-3, to=2-3]
				\arrow[no head, from=1-1, to=2-1]
			\end{tikzcd}
		$
		&
		\tagarray{\label{eq:m_redex}}
		\end{tabular}
	\end{center}
	\end{defn}

%Multiplicative linear logic proof structures come equipped with two reduction rules.

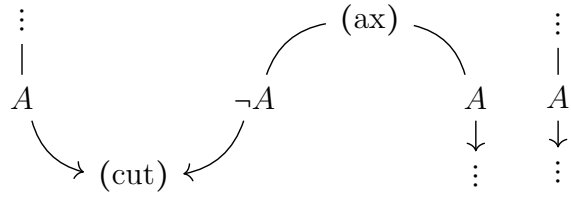
\begin{figure}
		\begin{center}
			\begin{tabular}{ c c }
			$
			\begin{tikzcd}[column sep = small, row sep = small]
			\vdots\arrow[d,dash] &&& \ax\arrow[dl, bend right, dash]\arrow[dr,dash, bend left]\\
			A\arrow[dr, bend right] && \neg A\arrow[dl,bend left] && A\arrow[d]\\
			& \cut &&& \vdots
			\end{tikzcd}
			$
			&
			$
			\begin{tikzcd}[column sep = small, row sep = small]
				\vdots\arrow[d,dash]\\
				A\arrow[d]\\
				\vdots
			\end{tikzcd}
			$
			\end{tabular}
		\end{center}
	\caption{Reduction of an $a$-redex.}
	\label{figure:a_redex_reduction}
\end{figure}

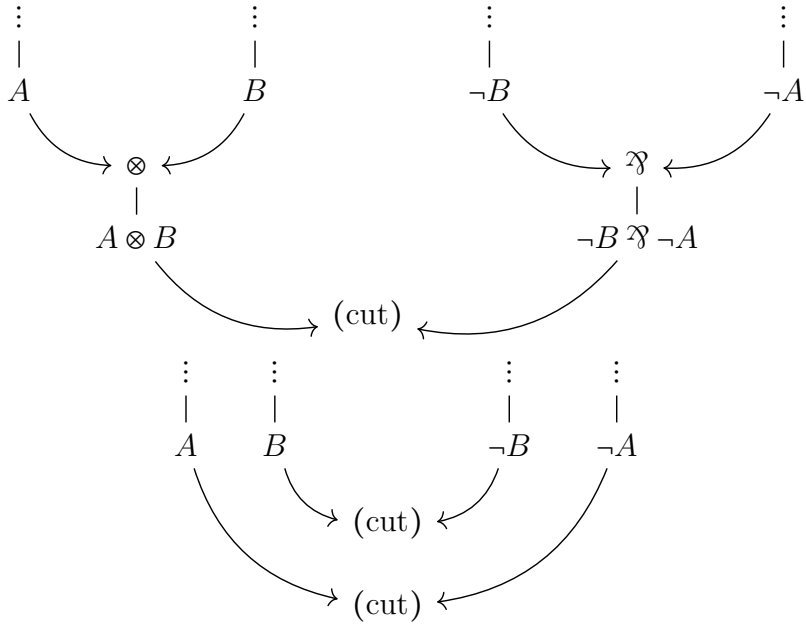
\begin{figure}
		\begin{center}
		\begin{tabular}{ >{\centering}m{12cm} }
		$\begin{tikzcd}[column sep = small, row sep = small]
			\vdots && \vdots && \vdots && \vdots \\
			A && B && {\neg B} && {\neg A} \\
			& \otimes &&&& \parr \\
			& {A \otimes B} &&&& {\neg B \parr \neg A} \\
			&&& \cut
%			\arrow[curve={height=12pt}, from=4-2, to=5-4]
%			\arrow[curve={height=-12pt}, from=4-6, to=5-4]
			\arrow[from=2-5, to=3-6, bend right]
			\arrow[from=2-7, to=3-6, bend left]
			\arrow[from=3-6, to=4-6, dash]
			\arrow[from=3-2, to=4-2, dash]
			\arrow[from=2-1, to=3-2, bend right]
			\arrow[from=2-3, to=3-2, bend left]
			\arrow[from=1-1, to=2-1, dash]
			\arrow[from=1-3, to=2-3, dash]
			\arrow[from=1-5, to=2-5, dash]
			\arrow[from=1-7, to=2-7, dash]
			\arrow[from=4-2, to=5-4, bend right]
			\arrow[from=4-6, to=5-4, bend left]
		\end{tikzcd}$
	\\
	\begin{tikzcd}[column sep = small, row sep = small]
		\vdots & \vdots && \vdots & \vdots \\
		A & B && {\neg B} & {\neg A} \\
		&& \cut \\
		&& \cut
		\arrow[from=2-2, to=3-3, bend right]
		\arrow[from=2-4, to=3-3, bend left]
		\arrow[from=2-5, to=4-3, bend left]
		\arrow[from=2-1, to=4-3, bend right]
		\arrow[from=1-1, to=2-1, no head]
		\arrow[from=1-2, to=2-2, dash]
		\arrow[from=1-4, to=2-4, dash]
		\arrow[from=1-5, to=2-5, dash]
	\end{tikzcd}
	\end{tabular}
		\end{center}
	\caption{Reduction of a $m$-redex.}
	\label{figure:m_redex_reduction}
\end{figure}

	\begin{defn}\label{def:reduction} A \emph{reduction} $\gamma: \pi \lto \pi'$ between proof structures $\pi, \pi'$ is an $a$-redex or $m$-redex $\gamma$ in $\pi$ such that reducing $\gamma$ in $\pi$ yields $\pi'$, where
	\begin{itemize}
	\item if $\gamma$ is an $a$-redex \eqref{eq:a_redex}, the reduction is the proof structure obtained by replacing this subgraph by what is displayed on the right in Figure \ref{figure:a_redex_reduction}.
	\item if $\gamma$ is an $m$-redex \eqref{eq:m_redex}, the reduction is the proof structure obtained by replacing this subgraph by what is displayed on the bottom in Figure \ref{figure:m_redex_reduction}.
	\end{itemize} 
	\end{defn}

%For an example of a cut reduction process performed on a proof net, see Example \ref{ex:basic_cut_elim}.
	
	\begin{defn}\label{def:morphisms} Let $\gamma: \pi \lto \pi'$ be a reduction of proof structures. We define morphisms $S_\gamma, T_\gamma$ of $k$-algebras
		\begin{equation}
			\begin{tikzcd}
				P_{\pi'}\arrow[rr,bend left, "{T_\gamma}"] & & P_{\pi}\arrow[ll, bend left, "{S_\gamma}"]
			\end{tikzcd}
		\end{equation}
		as follows: an unoriented atom in $\pi'$ (resp. $\pi$) not associated to an edge in the subgraph $\gamma$ is sent to itself in $\pi$ (resp. $\pi'$) by $T_\gamma$ (resp. $S_\gamma$), other unoriented atoms are mapped according to the schematic in Figure \ref{figure:tgamma} (resp. Figure \ref{figure:sgamma}).
\end{defn}

\begin{figure}
	% https://q.uiver.app/?q=WzAsOCxbMiwwLCJcXHZkb3RzIl0sWzIsMSwiXFxuZWcgQSJdLFsyLDIsIlxcdmRvdHMiXSxbMiw0LCJBIl0sWzQsNCwiXFxuZWcgQSJdLFswLDQsIlxcbmVnIEEiXSxbMyw1LCJcXGN1dCJdLFsxLDMsIlxcYXgiXSxbNyw1XSxbNywzXSxbMyw2XSxbNCw2XSxbMSw1XV0=
\[
%	\begin{tikzcd}[column sep = small, row sep = small]
%		&& \vdots \\
%		&& {\neg A} \\
%		&& \vdots \\
%		{}& \ax  &&& \vdots\arrow[d,dash]\\
%		{\neg A}\arrow[d] && A && {\neg A} \\
%		{\vdots}&&& \cut
%		\arrow[from=1-3, to=2-3, dash]
%		\arrow[from=2-3, to=3-3]
%		\arrow[from=4-2, to=5-1, bend right, dash]
%		\arrow[from=4-2, to=5-3, bend left, dash]
%		\arrow[from=5-3, to=6-4, bend right]
%		\arrow[from=5-5, to=6-4, bend left]
%		\arrow[from=2-3, to=4-1, thick, blue, bend right]
%		\end{tikzcd}
		%
%		\qquad
		% https://q.uiver.app/?q=WzAsOCxbMiwwLCJcXHZkb3RzIl0sWzIsMSwiQSJdLFsyLDIsIlxcdmRvdHMiXSxbMiw0LCJcXG5lZyBBIl0sWzQsNCwiQSJdLFswLDQsIkEiXSxbMSw1LCJcXGN1dCJdLFszLDMsIlxcYXgiXSxbNyw0XSxbNywzXSxbNSw2XSxbMyw2XSxbMSw0XSxbMCwxXSxbMSwyXV0=
		\begin{tikzcd}[column sep = small, row sep = small]
			&& \vdots \\
			&& A \\
			&& \vdots \\
			\vdots\arrow[d,dash]&&& \ax & {} \\
			A && {\neg A} && A\arrow[d] \\
			& \cut &&& \vdots
			\arrow[from=4-4, to=5-5, bend left, dash]
			\arrow[from=4-4, to=5-3, bend right, dash]
			\arrow[from=5-1, to=6-2, bend right]
			\arrow[from=5-3, to=6-2, bend left]
			\arrow[from=2-3, to=4-5, thick, blue, bend left]
			\arrow[from=1-3, to=2-3, dash]
			\arrow[from=2-3, to=3-3]
		\end{tikzcd}
		% https://q.uiver.app/?q=WzAsMjEsWzIsMSwiQiJdLFszLDIsIlxcY3V0Il0sWzQsMSwiXFxuZWcgQiJdLFsxLDEsIkEiXSxbMywzLCJcXGN1dCJdLFs1LDEsIlxcbmVnIEEiXSxbNSwwLCJcXHZkb3RzIl0sWzQsMCwiXFx2ZG90cyJdLFsyLDAsIlxcdmRvdHMiXSxbMSwwLCJcXHZkb3RzIl0sWzQsNCwiXFx2ZG90cyJdLFs2LDQsIlxcdmRvdHMiXSxbMiw0LCJcXHZkb3RzIl0sWzAsNCwiXFx2ZG90cyJdLFswLDUsIkEiXSxbMiw1LCJCIl0sWzQsNSwiXFxuZWcgQiJdLFs2LDUsIlxcbmVnIEEiXSxbMSw2LCJcXG90aW1lcyJdLFs1LDYsIlxccGFyciJdLFszLDcsIlxcY3V0Il0sWzAsMV0sWzIsMV0sWzUsNF0sWzMsNF0sWzksM10sWzgsMF0sWzcsMl0sWzYsNV0sWzEyLDE1XSxbMTMsMTRdLFsxMCwxNl0sWzExLDE3XSxbMTgsMjBdLFsxOSwyMF0sWzE2LDE5XSxbMTUsMThdLFsxNCwxOF0sWzE3LDE5XSxbMywxNF0sWzAsMTVdLFsyLDE2XSxbNSwxN11d
\]

\[
	\begin{tikzcd}[column sep = small, row sep = small]
		& \vdots & \vdots && \vdots & \vdots \\
		& A & B && {\neg B} & {\neg A} \\
		&&& \cut \\
		&&& \cut \\
		\vdots && \vdots && \vdots && \vdots \\
		A && B && {\neg B} && {\neg A} \\
		& \otimes\arrow[d,dash] &&&& \parr\arrow[d,dash] \\
		& A \otimes B &&&& \neg B \parr \neg A\\
		&&& \cut
		\arrow[from=2-3, to=3-4, bend right]
		\arrow[from=2-5, to=3-4, bend left]
		\arrow[from=2-6, to=4-4, bend left]
		\arrow[from=2-2, to=4-4, bend right]
		\arrow[from=1-2, to=2-2, no head]
		\arrow[from=1-3, to=2-3, dash]
		\arrow[from=1-5, to=2-5, dash]
		\arrow[from=1-6, to=2-6, dash]
		\arrow[from=5-3, to=6-3, dash]
		\arrow[from=5-1, to=6-1, dash]
		\arrow[from=5-5, to=6-5, dash]
		\arrow[from=5-7, to=6-7, dash]
		\arrow[from=8-2, to=9-4, bend right]
		\arrow[from=8-6, to=9-4, bend left]
		\arrow[from=6-5, to=7-6]
		\arrow[from=6-3, to=7-2]
		\arrow[from=6-1, to=7-2]
		\arrow[from=6-7, to=7-6]
		\arrow[from=2-2, to=6-1, thick, blue, bend left]
		\arrow[from=2-3, to=6-3, thick, blue, bend right]
		\arrow[from=2-5, to=6-5, thick, blue, bend left]
		\arrow[from=2-6, to=6-7, thick, blue, bend right]
	\end{tikzcd}
\]
\caption{Schematic for $T_\gamma$.}
\label{figure:tgamma}
\end{figure}

\begin{figure}
	% https://q.uiver.app/?q=WzAsMTAsWzIsMCwiXFx2ZG90cyJdLFsyLDEsIlxcbmVnIEEiXSxbMiwyLCJcXHZkb3RzIl0sWzIsNCwiQSJdLFs0LDQsIlxcbmVnIEEiXSxbMCw0LCJcXG5lZyBBIl0sWzMsNSwiXFxjdXQiXSxbMSwzLCJcXGF4Il0sWzQsMywiXFx2ZG90cyJdLFswLDUsIlxcdmRvdHMiXSxbNyw1LCIiLDAseyJjdXJ2ZSI6Miwic3R5bGUiOnsiaGVhZCI6eyJuYW1lIjoibm9uZSJ9fX1dLFs3LDMsIiIsMCx7ImN1cnZlIjotMiwic3R5bGUiOnsiaGVhZCI6eyJuYW1lIjoibm9uZSJ9fX1dLFszLDYsIiIsMCx7ImN1cnZlIjoyfV0sWzQsNiwiIiwwLHsiY3VydmUiOi0yfV0sWzUsMSwiIiwwLHsiY3VydmUiOi0zLCJjb2xvdXIiOlsyNzAsNjAsNjBdfV0sWzMsMSwiIiwwLHsiY3VydmUiOjIsImNvbG91ciI6WzI3MCw2MCw2MF0sInN0eWxlIjp7ImJvZHkiOnsibmFtZSI6ImRhc2hlZCJ9fX1dLFs0LDEsIiIsMCx7ImN1cnZlIjozLCJjb2xvdXIiOlsyNzAsNjAsNjBdfV0sWzAsMSwiIiwyLHsic3R5bGUiOnsiaGVhZCI6eyJuYW1lIjoibm9uZSJ9fX1dLFsxLDJdLFs4LDQsIiIsMCx7InN0eWxlIjp7ImhlYWQiOnsibmFtZSI6Im5vbmUifX19XSxbNSw5XV0=
	\[
	\begin{tikzcd}[column sep = small, row sep = small]
		&& \vdots \\
		&& A \\
		&& \vdots \\
		\vdots &&& \ax \\
		A && {\neg A} && A \\
		& \cut &&& \vdots
		\arrow[curve={height=-12pt}, no head, from=4-4, to=5-5]
		\arrow[curve={height=12pt}, no head, from=4-4, to=5-3]
		\arrow[curve={height=12pt}, from=5-1, to=6-2]
		\arrow[curve={height=-12pt}, from=5-3, to=6-2]
		\arrow[no head, from=1-3, to=2-3]
		\arrow[from=2-3, to=3-3]
		\arrow[draw={rgb,255:red,153;green,92;blue,214}, curve={height=18pt}, from=5-5, to=2-3, thick]
		\arrow[draw={rgb,255:red,153;green,92;blue,214}, curve={height=-18pt}, from=5-3, to=2-3, thick]
		\arrow[draw={rgb,255:red,153;green,92;blue,214}, curve={height=-18pt}, from=5-1, to=2-3, thick]
		\arrow[from=5-5, to=6-5]
		\arrow[no head, from=4-1, to=5-1]
	\end{tikzcd}
	\]
	\[
	\begin{tikzcd}[column sep = small, row sep = small]
		\vdots &&& \vdots &&&& \vdots &&& \vdots \\
		A &&& B &&&& {\neg B} &&& {\neg A} \\
		&&&&& \cut \\
		&&&&& \cut \\
		&& \vdots && \vdots && \vdots && \vdots \\
		&& A && B && {\neg B} && {\neg A} \\
		&&& \otimes &&&& \parr \\
		\\
		&&& {A \otimes B} &&&& {\neg B \parr \neg A} \\
		&&&&& \cut
		\arrow[curve={height=12pt}, from=2-4, to=3-6]
		\arrow[curve={height=-12pt}, from=2-8, to=3-6]
		\arrow[curve={height=-12pt}, from=2-11, to=4-6]
		\arrow[curve={height=12pt}, from=2-1, to=4-6]
		\arrow[no head, from=1-1, to=2-1]
		\arrow[no head, from=1-4, to=2-4]
		\arrow[no head, from=1-8, to=2-8]
		\arrow[no head, from=1-11, to=2-11]
		\arrow[no head, from=5-5, to=6-5]
		\arrow[no head, from=5-3, to=6-3]
		\arrow[no head, from=5-7, to=6-7]
		\arrow[no head, from=5-9, to=6-9]
		\arrow[curve={height=-12pt}, from=6-5, to=7-4]
		\arrow[curve={height=12pt}, from=6-3, to=7-4]
		\arrow[curve={height=-12pt}, from=6-9, to=7-8]
		\arrow[draw={rgb,255:red,153;green,92;blue,214}, curve={height=6pt}, from=6-9, to=2-11]
		\arrow[no head, from=7-4, to=9-4]
		\arrow[curve={height=12pt}, from=9-4, to=10-6]
		\arrow[no head, from=7-8, to=9-8]
		\arrow[curve={height=-12pt}, from=9-8, to=10-6]
		\arrow[draw={rgb,255:red,153;green,92;blue,214}, curve={height=30pt}, from=9-8, to=2-11, thick]
		\arrow[draw={rgb,255:red,153;green,92;blue,214}, curve={height=-30pt}, from=9-4, to=2-1, thick]
		\arrow[draw={rgb,255:red,153;green,92;blue,214}, curve={height=-6pt}, from=6-3, to=2-1, thick]
		\arrow[draw={rgb,255:red,153;green,92;blue,214}, curve={height=6pt}, from=6-5, to=2-4, thick]
		\arrow[draw={rgb,255:red,153;green,92;blue,214}, curve={height=-6pt}, from=6-7, to=2-8, thick]
		\arrow[curve={height=12pt}, from=6-7, to=7-8]
		\arrow[draw={rgb,255:red,153;green,92;blue,214}, curve={height=-30pt}, from=9-8, to=2-8, thick]
		\arrow[draw={rgb,255:red,153;green,92;blue,214}, curve={height=30pt}, from=9-4, to=2-4, thick]
	\end{tikzcd}
	\]
\caption{Schematic for $S_\gamma$.}
\label{figure:sgamma}
\end{figure}

It is easily checked that

\begin{lemma}\label{lemma:section_ST} $S_\gamma T_\gamma = 1$.
\end{lemma}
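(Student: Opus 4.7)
The plan is to exploit the fact that $P_{\pi'}$ is the free commutative $k$-algebra on the set $U_{\pi'}$ of unoriented atoms of $\pi'$, so a $k$-algebra endomorphism of $P_{\pi'}$ is determined by its restriction to $U_{\pi'}$. In particular, both $S_\gamma T_\gamma$ and the identity $1_{P_{\pi'}}$ are such endomorphisms, so it suffices to verify that $S_\gamma(T_\gamma(X)) = X$ for every generator $X \in U_{\pi'}$.

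First I would dispense with the easy case. If $X$ is associated to an edge $e'$ of $\pi'$ that is not part of the subgraph produced by reducing the redex $\gamma$, then by the convention in Definition \ref{def:morphisms} the atom $X$ is identified with a corresponding atom on an edge of $\pi$ lying outside the redex subgraph. Both $T_\gamma$ and $S_\gamma$ act as the identity on such atoms, so $S_\gamma T_\gamma(X) = X$ trivially.

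The substantive case is when $X$ lies on an edge internal to the reduced subgraph. I would treat the $a$-redex and $m$-redex cases separately. For the $a$-redex, the reduced subgraph contains a single $A$-labelled edge (the right-hand side of Figure \ref{figure:a_redex_reduction}), and its unoriented atoms are acted on by $T_\gamma$ according to the blue arrow in Figure \ref{figure:tgamma}, landing on the topmost $A$-edge of the $a$-redex in $\pi$. The schematic for $S_\gamma$ in Figure \ref{figure:sgamma} then sends every atom on each of the three $A$ or $\neg A$ edges of the $a$-redex---including the one hit by $T_\gamma$---back to $X$ on the single reduced $A$-edge, giving $S_\gamma T_\gamma(X) = X$ as required. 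For the $m$-redex, exactly the same argument applies, this time checking the four reduced conclusion edges $A, B, \neg B, \neg A$ (the upper half of the bottom diagram of Figure \ref{figure:m_redex_reduction}): $T_\gamma$ sends the atom on each such edge to the corresponding atom on one of the four ``top'' edges of the $m$-redex in $\pi$, and $S_\gamma$ collapses every atom on the edges of the redex lying below each such top edge back to the reduced edge atom.

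The only real obstacle is bookkeeping: to make the case check airtight one has to be careful that the blue arrows in Figures \ref{figure:tgamma} and \ref{figure:sgamma} are set up so that, for each reduced edge $e'$ of $\pi'$, the unique $\pi$-atom picked out by $T_\gamma$ lies among the $\pi$-atoms sent back to $e'$ by $S_\gamma$. No nontrivial algebra is involved---the identity is essentially built into the definition of $S_\gamma$ and $T_\gamma$---so once the case analysis is laid out the lemma follows immediately.
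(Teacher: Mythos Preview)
Your proposal is correct and is exactly the verification the paper leaves implicit: the paper states the lemma with ``It is easily checked that'' and later, inside the proof of Proposition~\ref{prop:induced_maps}, simply asserts ``It is clear that $S_\gamma T_\gamma = 1$''. Your generator-by-generator case analysis using the schematics in Figures~\ref{figure:tgamma} and~\ref{figure:sgamma} is the natural way to unpack that claim, and there is nothing to add.
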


The maps $T_\gamma, S_\gamma$ are morphisms of the pairs $(I_\pi, P_\pi), (I_{\pi'}, P_{\pi'})$ in the following sense:

	\begin{proposition}\label{prop:induced_maps}
		If $\gamma: \pi \lto \pi'$ is a reduction of proof structures then 
		\begin{equation}
		T_\gamma (I_{\pi'}) \subseteq I_\pi, \qquad S_\gamma(I_\pi) \subseteq I_{\pi'}
		\end{equation}
		and the induced morphisms of $k$-algebras $\overline{T}_\gamma, \overline{S}_\gamma$ making the squares in the following diagram commute
		\begin{equation}\label{eq:induced}
			\begin{tikzcd}[column sep = large, row sep = large]
				I_\pi\arrow[r] & P_\pi\arrow[r,"{p}"]\arrow[d, bend right,swap, "{S_\gamma}"] & R_\pi\arrow[d,bend right,swap, "{\overline{S}_\gamma}"]\\
				I_{\pi'}\arrow[r] & P_{\pi'}\arrow[r,"{p'}"]\arrow[u,bend right,swap,"{T_\gamma}"] & R_{\pi'}\arrow[u,bend right,swap, "{\overline{T}_\gamma}"]
			\end{tikzcd}
		\end{equation}
		are mutually inverse isomorphisms where $p, p'$ are the quotient maps.
	\end{proposition}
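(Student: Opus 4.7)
The plan is to verify both containments $T_\gamma(I_{\pi'}) \subseteq I_\pi$ and $S_\gamma(I_\pi) \subseteq I_{\pi'}$ by checking images of generators. The induced maps $\overline{T}_\gamma, \overline{S}_\gamma$ on the coordinate rings are then well-defined $k$-algebra morphisms; one direction of the inverse claim is immediate from Lemma~\ref{lemma:section_ST}, and the other reduces to verifying $T_\gamma S_\gamma(v) - v \in I_\pi$ for each variable $v$ of $P_\pi$. All non-trivial content is localised at the boundary of the redex $\gamma$.

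For $T_\gamma(G_{\pi'}) \subseteq I_\pi$: any link $l$ of $\pi'$ whose incident edges are disjoint from the reduced subgraph is also a link of $\pi$ on the same variables, and $T_\gamma$ is the identity there, so $T_\gamma(G_l) = G_l \subseteq I_\pi$. For an $a$-redex the only boundary case is a link of $\pi'$ incident at an endpoint of the collapsed edge, whose generator involves a variable $\tilde X$ on the new unified edge; by the schematic for $T_\gamma$, $\tilde X$ is sent to the corresponding variable $X''$ on the rightmost $A$-edge of the redex, while the analogous generator in $\pi$ involves the leftmost $A$-variable $X$, and the difference $X - X''$ is the telescoping sum $(X - X') + (X' - X'')$ of the $\cut$ and $\ax$ generators of the redex, hence in $I_\pi$. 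For an $m$-redex the boundary case is the two newly created $\cut$ links of $\pi'$, whose generators of the form $X_i - W_i$ and $Y_j - Z_j$ decompose as telescoping sums of the $\otimes$, old $\cut$, and $\parr$ generators of $\pi$.

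For $S_\gamma(G_\pi) \subseteq I_{\pi'}$: outside the redex $S_\gamma$ is the identity and there is nothing to check. The generators of the $\ax$ and $\cut$ links in an $a$-redex, and of the $\otimes$ and $\parr$ links in an $m$-redex, all map to zero in $P_{\pi'}$ because $S_\gamma$ identifies each of their incident edges with a single edge of $\pi'$. The generators of the $\cut$ link of an $m$-redex map under $S_\gamma$ precisely to the generators of the two new $\cut$ links of $\pi'$, hence lie in $I_{\pi'}$.

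Granting this, the induced maps $\overline{T}_\gamma, \overline{S}_\gamma$ exist, and $\overline{S}_\gamma \overline{T}_\gamma = 1_{R_{\pi'}}$ follows from Lemma~\ref{lemma:section_ST} by passage to the quotients. For the remaining equality $\overline{T}_\gamma \overline{S}_\gamma = 1_{R_\pi}$, if $v$ is a variable on an edge surviving in $\pi'$ then $T_\gamma S_\gamma(v) = v$ by the defining conventions on $T_\gamma, S_\gamma$; if $v$ lies on an edge of the redex that is eliminated by $\gamma$, then $T_\gamma S_\gamma(v) - v$ equals, up to sign, either one of the $\ax, \cut, \otimes, \parr$ generators of the redex or the telescoping sum of them already used above, and in either case lies in $I_\pi$. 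The main obstacle is the careful bookkeeping of which variables are identified by which link, especially for the $\cut$ of the $m$-redex where the atom sequences of $A \otimes B$ and $\neg B \parr \neg A$ are reversed; once these pairings are fixed every case reduces to an immediate polynomial identity.
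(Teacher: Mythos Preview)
Your proposal is correct and follows essentially the same strategy as the paper: verify both containments on generators (using telescoping sums through the redex for the nontrivial cases), then deduce the isomorphism from $S_\gamma T_\gamma = 1$ together with a direct check that $T_\gamma S_\gamma(v) - v \in I_\pi$ for each variable $v$. One small imprecision worth tightening: in the $a$-redex case for $S_\gamma$, a link of $\pi$ that is not part of the redex but shares an edge with it (e.g.\ the link above the leftmost $A$) has $S_\gamma$ acting nontrivially on that edge's variable, so ``outside the redex $S_\gamma$ is the identity'' is not literally true---the paper handles this case explicitly---though the image is still exactly the corresponding generator of that link in $\pi'$, so your conclusion stands.
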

	\begin{proof}
		To prove $S_\gamma(I_\pi) \subseteq I_{\pi'}$ we first choose a link $l$ in $\pi$. If $l$ does not appear in the reduction (i.e, it is not the particular $\ax$ or $\cut$ involved in an $a$-redex nor the particular $\otimes, \parr,\cut$ in an $m$-redex) and no edge of $l$ is in the redex then every generator of $I_l$ is in $I_{\pi'}$. If $l$ does not appear in $\gamma$ but $l,\gamma$ share an edge, then $S_\gamma(I_l) \subseteq I_{\pi'}$ clearly holds in the case where $\gamma$ is an $m$-redex. In the case of $\ax$/$\cut$ (as for example the red $\ax$ link in Diagram \eqref{eq:adjacent} below)
		\begin{equation}\label{eq:adjacent}
			\begin{tikzcd}[column sep = small, row sep = small]
				& \textcolor{red}{\ax}\arrow[dr,bend left, dash, red]\arrow[dl,bend right, dash] &&&& \textcolor{red}{\ax}\arrow[dl,dash, bend right, red]\arrow[dr,bend left, dash, red]\\
				\neg A\arrow[d] && \textcolor{red}{A}\arrow[dr,bend right, red] && \textcolor{red}{\neg A}\arrow[dl,bend left, red] && \textcolor{red}{A}\arrow[d,red]\\
				\vdots&&&\textcolor{red}{\cut} &&& \textcolor{red}{\operatorname{c}}
			\end{tikzcd}
		\end{equation}
	 	the equations generating $I_l$ are of the form $X - X'$ where $X$ is an unoriented atom of the label of an edge in the redex and $X'$ is an unoriented atom of the label of an edge occuring in $l$. By inspection of Figure \ref{figure:a_redex_reduction} we see $S_\gamma(X - X') \in I_{\pi'}$.
	 	
	 	Now we consider the case where $l$ is in the redex and $\gamma$ reduces an $a$-redex. For all generators $X - X'$ of $I_l$ we have $S_\gamma(X) = S_\gamma(X')$ and so $S_\gamma(X - X') = 0 \in I_{\pi'}$. Now consider the case where $l$ is an $m$-redex. If $l$ is the displayed tensor or par link then any generator $X - X'$ of $I_{\pi'}$ associated to $l$ satisfies $S_\gamma(X) = S_\gamma(X')$ and so just as we had in the previous case we have $S_\gamma(X - X') = 0 \in I_{\pi'}$. The final case to consider is when $l$ is the displayed cut link. Let $X - X'$ be a generator for $I_l$. Let $l_1$ denote the cut link in $\pi$ with premises labelled $B, \neg B$ and $l_2$ denote the cut link with premises $A, \neg A$. We have that $S_\gamma(X) - S_\gamma(X') \in I_{l_1}$ if $X$ is an undirected atom of $A$ and $S_\gamma(X) - S_\gamma(X') \in I_{l_2}$ if $X$ is an undirected atom of $B$. In either case, we have that $S_\gamma(X) - S_\gamma(X') \in I_{\pi'}$.
		
		 Now we prove $T_\gamma(I_{\pi'}) \subseteq I_{\pi}$. A generator $X - X'$ of $I_{\pi'}$ is associated to a link $l$ of $\pi'$. If $\gamma$ is a reduction of an $a$-redex then $l$ is also present in $\pi$ and it is clear $T_{\gamma}(X -X') \in I_{\pi}$. If $\gamma$ is a reduction of an $m$-redex and $l$ is not one of the cut links shown in the bottom diagram of Figure \ref{figure:m_redex_reduction} then the same reasoning applies. If $l$ is one of these cut links the generator is either $X - X'$ for some unoriented atom $X$ in $A$ or $Y - Y'$ for some unoriented atom $Y$ in $B$, where we are using the notation of Figure \ref{figure:m_redex_reduction}. There exists unoriented atoms $X'',X'''$ of $A \otimes B$ and $\neg A \parr \neg B$ respectively which satisfy $X - X'', X'' - X''', X''' - X' \in I_{\pi}$. Hence $T_{\gamma}(X - X') = (X - X'') + (X'' - X''') + (X''' - X') \in I_{\pi}$, and similarly for $Y - Y'$. Hence $T_{\gamma}(I_{\pi'}) \subseteq I_{\pi}$.
		 
		 Hence $\overline{S}_\gamma, \overline{T}_\gamma$ exist. To prove they are mutually inverse it suffices to prove:
		 \begin{align}
		 	\overline{T}_\gamma \overline{S}_\gamma p &= p\\
		 	\overline{S}_\gamma \overline{T}_\gamma p' &= p'
		 \end{align}
	 as $p,p'$ are surjective. By commutativity of \eqref{eq:induced} this is equivalent to showing $p' S_\gamma T_\gamma = p', pT_\gamma S_\gamma = p$, or $p'(S_\gamma T_\gamma - 1) = 0, p(T_\gamma S_\gamma - 1)= 0$. It suffices to check this on generators, ie, on unoriented atoms. It is clear that $S_\gamma T_\gamma = 1$, however we have $T_\gamma S_\gamma \neq 1$. The circumstances where this is the case is indicated schematically in Figure \ref{fig:understanding_TS}.
	 
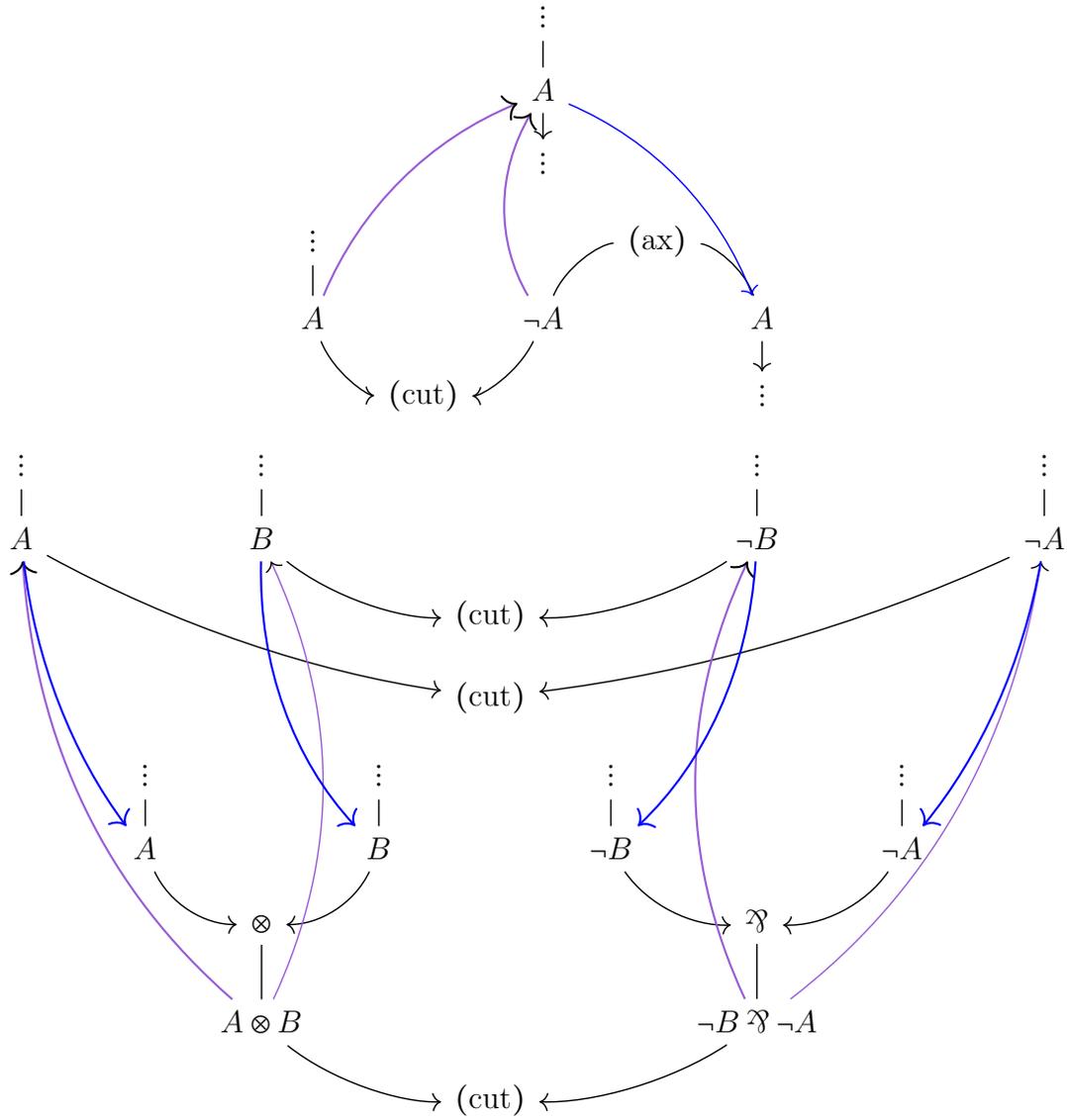
\begin{figure}
\[
 	\begin{tikzcd}[column sep = small, row sep = small]
 		&& \vdots \\
 		&& A \\
 		&& \vdots \\
 		\vdots &&& \ax \\
 		A && {\neg A} && A \\
 		& \cut &&& \vdots
 		\arrow[curve={height=-12pt}, no head, from=4-4, to=5-5]
 		\arrow[curve={height=12pt}, no head, from=4-4, to=5-3]
 		\arrow[curve={height=12pt}, from=5-1, to=6-2]
 		\arrow[curve={height=-12pt}, from=5-3, to=6-2]
 		\arrow[no head, from=1-3, to=2-3]
 		\arrow[from=2-3, to=3-3]
 		\arrow[draw={rgb,255:red,153;green,92;blue,214}, curve={height=-18pt}, from=5-3, to=2-3, thick]
 		\arrow[draw={rgb,255:red,153;green,92;blue,214}, curve={height=-18pt}, from=5-1, to=2-3, thick]
 		\arrow[from=5-5, to=6-5]
 		\arrow[no head, from=4-1, to=5-1]
 		\arrow[curve={height=-18pt}, from=2-3, to=5-5, blue]
 	\end{tikzcd}
\]
\[
 	\begin{tikzcd}[column sep = small, row sep = small]
 		\vdots &&& \vdots &&&& \vdots &&& \vdots \\
 		A &&& B &&&& {\neg B} &&& {\neg A} \\
 		&&&&& \cut \\
 		&&&&& \cut \\
 		&& \vdots && \vdots && \vdots && \vdots \\
 		&& A && B && {\neg B} && {\neg A} \\
 		&&& \otimes &&&& \parr \\
 		\\
 		&&& {A \otimes B} &&&& {\neg B \parr \neg A} \\
 		&&&&& \cut
 		\arrow[curve={height=12pt}, from=2-4, to=3-6]
 		\arrow[curve={height=-12pt}, from=2-8, to=3-6]
 		\arrow[curve={height=-12pt}, from=2-11, to=4-6]
 		\arrow[curve={height=12pt}, from=2-1, to=4-6]
 		\arrow[no head, from=1-1, to=2-1]
 		\arrow[no head, from=1-4, to=2-4]
 		\arrow[no head, from=1-8, to=2-8]
 		\arrow[no head, from=1-11, to=2-11]
 		\arrow[no head, from=5-5, to=6-5]
 		\arrow[no head, from=5-3, to=6-3]
 		\arrow[no head, from=5-7, to=6-7]
 		\arrow[no head, from=5-9, to=6-9]
 		\arrow[curve={height=-12pt}, from=6-5, to=7-4]
 		\arrow[curve={height=12pt}, from=6-3, to=7-4]
 		\arrow[curve={height=-12pt}, from=6-9, to=7-8]
 		\arrow[no head, from=7-4, to=9-4]
 		\arrow[curve={height=12pt}, from=9-4, to=10-6]
 		\arrow[no head, from=7-8, to=9-8]
 		\arrow[curve={height=-12pt}, from=9-8, to=10-6]
 		\arrow[draw={rgb,255:red,153;green,92;blue,214}, curve={height=30pt}, from=9-8, to=2-11]
 		\arrow[draw={rgb,255:red,153;green,92;blue,214}, curve={height=-30pt}, from=9-4, to=2-1, thick]
 		\arrow[curve={height=12pt}, from=6-7, to=7-8]
 		\arrow[draw={rgb,255:red,153;green,92;blue,214}, curve={height=-30pt}, from=9-8, to=2-8, thick]
 		\arrow[draw={rgb,255:red,153;green,92;blue,214}, curve={height=30pt}, from=9-4, to=2-4]
 		\arrow[curve={height=-12pt}, from=2-11, to=6-9, blue, thick]
 		\arrow[curve={height=12pt}, from=2-1, to=6-3, blue, thick]
 		\arrow[curve={height=18pt}, from=2-4, to=6-5, blue, thick]
 		\arrow[curve={height=-18pt}, from=2-8, to=6-7, blue, thick]
 	\end{tikzcd}
 \]
\caption{Understanding the composite $TS$.}
\label{fig:understanding_TS}
\end{figure}

First we consider the case in the first diagram of Figure \ref{fig:understanding_TS}. Let $l$ be the displayed $\ax$ link and let $X$ be an unoriented atom of $A$. Then by inspection of the figure we have $TS(X) - X \in I_l$. Now let $l'$ denote the displayed cut link. There exists an unoriented atom $X'$ such that $X - X' \in I_{l'}$ and $TS(X) - X' \in I_{l}$. Hence, $TS(X) - X = TS(X) - X' + X' - X \in I_{\pi}$. This shows that $p(T_\gamma S_\gamma - 1)(X)= 0$.

Now we consider the case in the second diagram of Figure \ref{fig:understanding_TS}. Let $X$ be an unoriented atom of $A \otimes B$ and denote the displayed tensor link by $l$. There exists an unoriented atom $X'$ either of $A$ or of $B$ such that $X - X' \in I_l$. In either case we have that $X - TS(X) = X - X' \in I_l$ and so $p(T_\gamma S_\gamma - 1)(X)= 0$. The remaining cases are similar.
	\end{proof}
	
\begin{cor}\label{cor:ideals_intersect} Let $\gamma: \pi \lto \pi'$ be a reduction of proof structures. Then $I_{\pi'} = T_\gamma^{-1}(I_\pi)$ or, identifying $P_{\pi'}$ as a subring of $P_\pi$ with inclusion $T_\gamma$,
\begin{equation}
I_{\pi'} = I_\pi \cap P_{\pi'}\,.
\end{equation}
\end{cor}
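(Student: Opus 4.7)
The plan is to deduce the corollary directly from Proposition \ref{prop:induced_maps} together with the section relation $S_\gamma T_\gamma = 1$ of Lemma \ref{lemma:section_ST}. Once $I_{\pi'} = T_\gamma^{-1}(I_\pi)$ is established, the second formulation follows by interpreting $T_\gamma$ as the inclusion of $P_{\pi'}$ as a subring of $P_\pi$.

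For the forward inclusion $I_{\pi'} \subseteq T_\gamma^{-1}(I_\pi)$, I would simply quote the containment $T_\gamma(I_{\pi'}) \subseteq I_\pi$ from Proposition \ref{prop:induced_maps}. For the reverse inclusion, take $f \in P_{\pi'}$ with $T_\gamma(f) \in I_\pi$. Applying $S_\gamma$ and using $S_\gamma(I_\pi) \subseteq I_{\pi'}$ gives $S_\gamma T_\gamma(f) \in I_{\pi'}$, and then Lemma \ref{lemma:section_ST} reduces the left hand side to $f$, so $f \in I_{\pi'}$. This establishes the equality $I_{\pi'} = T_\gamma^{-1}(I_\pi)$.

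To obtain the formulation as an intersection, I would observe that $T_\gamma: P_{\pi'} \to P_\pi$ is injective (being a section of $S_\gamma$ by Lemma \ref{lemma:section_ST}), so identifying $P_{\pi'}$ with its image $T_\gamma(P_{\pi'}) \subseteq P_\pi$ turns $T_\gamma^{-1}(I_\pi)$ into $T_\gamma(P_{\pi'}) \cap I_\pi$, which under the identification is exactly $I_\pi \cap P_{\pi'}$.

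There is no real obstacle here; the whole content has been absorbed into Proposition \ref{prop:induced_maps} and Lemma \ref{lemma:section_ST}. The only point that deserves a line of care is noting that $T_\gamma$ is injective before identifying $P_{\pi'}$ with a subring of $P_\pi$, and that under this identification the scheme-theoretic preimage and the set-theoretic intersection agree.
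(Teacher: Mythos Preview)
Your proof is correct and follows essentially the same route as the paper's: both inclusions come from the containments $T_\gamma(I_{\pi'}) \subseteq I_\pi$ and $S_\gamma(I_\pi) \subseteq I_{\pi'}$ of Proposition \ref{prop:induced_maps} together with $S_\gamma T_\gamma = 1$ from Lemma \ref{lemma:section_ST}. Your explicit remark that $T_\gamma$ is injective before passing to the intersection formulation is a small elaboration the paper leaves implicit.
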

\begin{proof}
Proposition \ref{prop:induced_maps} shows that $T_\gamma(I_{\pi'}) \subseteq I_{\pi}$ so we have $I_{\pi'} \subseteq T_\gamma^{-1}(I_\pi)$. If $T_\gamma(f) \in I_\pi$ then by Lemma \ref{lemma:section_ST} $f = S_\gamma T_\gamma(f) \in S_\gamma( I_\pi ) \subseteq I_{\pi'}$ giving the reverse inclusion.
\end{proof}

% For a detailed example of the lemma, see Example \ref{ex:S}.

It is not difficult to understand the effect of taking the quotient of $P_\pi$ by the ideal $I_\pi$. All those unoriented atoms connected across links (in the sense that $U - V$ appears in $G_\pi$) are identified in $R_\pi$ with each other. This common image can be thought of as the \emph{path} which connects all these differently named occurrences of the ``same'' variable; such paths are sometimes called persistent paths (see Definition \ref{defn:persistent_path}).

\begin{defn}\label{defn:simtilde} Let $\pi$ be a proof structure and $U, V \in U_\pi$. We write $U \sim V$ if $U - V \in G_\pi$ or $V - U \in G_\pi$. Let $\approx$ be the equivalence relation on $U_\pi$ generated by $\sim$.
\end{defn}

%For proof nets these equivalence classes have a very simple structure, as we will see below. We note that this is the first time that we have stated a result for proof nets that does not hold in general for proof structures.% (see Example \ref{ex:what_goes_wrong} for a counterexample). 

\begin{lemma}\label{lemma:simvsequal} If $U, V \in U_\pi$ then we have $U \approx V$ if and only if $U - V \in I_\pi$.
\end{lemma}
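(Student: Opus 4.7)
The plan is to prove the two directions separately: the forward direction via a telescoping chain argument, and the reverse direction by constructing an auxiliary ring homomorphism whose kernel contains $I_\pi$.

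For the forward direction ($\Rightarrow$), suppose $U \approx V$. By definition of the equivalence relation generated by $\sim$, either $U = V$ (in which case $U - V = 0 \in I_\pi$) or there is a finite chain $U = W_0, W_1, \ldots, W_n = V$ with $W_i \sim W_{i+1}$ for each $0 \le i < n$. For each such $i$ either $W_i - W_{i+1}$ or $W_{i+1} - W_i$ lies in $G_\pi \subseteq I_\pi$, so in both cases $W_i - W_{i+1} \in I_\pi$. The telescoping sum
\[
U - V \;=\; \sum_{i=0}^{n-1}(W_i - W_{i+1})
\]
then lies in $I_\pi$.

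For the reverse direction ($\Leftarrow$), I would introduce the polynomial ring $Q_\pi := k[U_\pi/\!\approx]$ on the set of equivalence classes of $\approx$, together with the unique $k$-algebra homomorphism $\phi: P_\pi \to Q_\pi$ extending the set map $U \mapsto [U]$ (which exists by the universal property of $P_\pi$ as the free commutative $k$-algebra on $U_\pi$, Definition \ref{def:polynomial_ring}). Every generator $U - V$ of $I_\pi$ arises from some link of $\pi$ with $U \sim V$, so $\phi(U - V) = [U] - [V] = 0$ in $Q_\pi$. Hence $G_\pi \subseteq \ker\phi$, and since $\ker\phi$ is an ideal we conclude $I_\pi \subseteq \ker\phi$.

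Now if $U - V \in I_\pi$ then $0 = \phi(U-V) = [U] - [V]$ in $Q_\pi$. Since $Q_\pi$ is a polynomial ring on the set $U_\pi/\!\approx$, its degree-one monomials $[W]$ (for distinct equivalence classes $[W]$) are $k$-linearly independent, so $[U] - [V] = 0$ forces $[U] = [V]$, i.e. $U \approx V$. The only potential obstacle is the well-definedness of $\phi$, but this is immediate from the universal property of $P_\pi$; no Gröbner basis machinery, and in particular no equality $\ker\phi = I_\pi$, is needed for this lemma.
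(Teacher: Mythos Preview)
Your proof is correct. The forward direction is the same as the paper's (the paper simply says ``clearly'').

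For the reverse direction your route differs from the paper's. The paper argues directly on an expression $U-V=\sum_j q_j(U_j-V_j)$ with $U_j-V_j\in G_\pi$: by comparing degrees the $q_j$ may be taken in $k$, and then one extracts from this linear combination a chain of $\sim$-relations connecting $U$ to $V$. Your argument instead factors through the quotient: the $k$-algebra map $\phi:P_\pi\to k[U_\pi/\!\approx]$ kills $G_\pi$, hence $I_\pi$, and linear independence of the generators of the target forces $[U]=[V]$. Both are short; your approach avoids the slightly delicate combinatorial step of reading off a connecting chain from an arbitrary $k$-linear combination (the paper's claim that the $q_j$ may be taken in $\{1,-1\}$ deserves a word of justification), while the paper's approach has the minor advantage of exhibiting the chain explicitly rather than appealing to linear independence in an auxiliary ring.
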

\begin{proof}
If $U \approx V$ then clearly $U - V \in I_\pi$. If $U - V \in I_\pi$ then we can write $U - V = \sum_j q_j ( U_j - V_j )$ with $U_j - V_j \in G_\pi$. Comparing the degrees of the left and right hand side, $q_j$ may be assumed to be in $k$, and in fact in $\{ 1, -1 \}$. From this sum we deduce a chain of $U_j \sim V_j$ relations leading from $V$ to $U$ as claimed.
\end{proof}

\begin{lemma}\label{lemma:sep_by_equiv} Let $\gamma: \pi \lto \pi'$ be a reduction of proof structures. Then if $U, V \in U_{\pi'}$ have $T_\gamma(U) \approx T_\gamma(V)$ in $U_\pi$ if and only if $U \approx V$ in $U_{\pi'}$.
\end{lemma}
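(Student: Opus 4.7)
The plan is to reduce the claim about the equivalence relation $\approx$ to a claim about membership in the defining ideals, and then invoke Corollary \ref{cor:ideals_intersect}. The only real content is translating between the combinatorial ``chain of $\sim$'' formulation and the algebraic ``difference lies in $I$'' formulation, and this is precisely what Lemma \ref{lemma:simvsequal} gives us on both sides of the reduction $\gamma$.

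Concretely, I would proceed by the following chain of equivalences. For $U, V \in U_{\pi'}$, Lemma \ref{lemma:simvsequal} applied to $\pi'$ says
\[
U \approx V \text{ in } U_{\pi'} \iff U - V \in I_{\pi'}.
\]
Using Corollary \ref{cor:ideals_intersect}, which identifies $I_{\pi'} = T_\gamma^{-1}(I_\pi)$, this is equivalent to $T_\gamma(U - V) \in I_\pi$. Since $T_\gamma$ is a morphism of $k$-algebras, $T_\gamma(U - V) = T_\gamma(U) - T_\gamma(V)$, and this lies in $I_\pi$ if and only if $T_\gamma(U) \approx T_\gamma(V)$ in $U_\pi$ by Lemma \ref{lemma:simvsequal} applied to $\pi$.

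The one small thing to check is that the Lemma \ref{lemma:simvsequal} applications are legal: they require the elements being compared to be unoriented atoms of the relevant proof structure. On the $\pi'$ side this is by hypothesis. On the $\pi$ side, by inspection of Figures \ref{figure:tgamma} (and the fact that $T_\gamma$ sends an unoriented atom on an edge not touched by $\gamma$ to itself, and otherwise sends an unoriented atom to the corresponding atom on the ``opposite'' edge of the redex), the images $T_\gamma(U), T_\gamma(V)$ are indeed unoriented atoms of $\pi$, not just arbitrary polynomials. So Lemma \ref{lemma:simvsequal} applies.

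I do not anticipate any real obstacle: the work has been done in Proposition \ref{prop:induced_maps} and Corollary \ref{cor:ideals_intersect}. The only thing one might worry about is whether the image $T_\gamma(U)$ of an unoriented atom is again a single unoriented atom (rather than, say, a sum), but this is immediate from Definition \ref{def:morphisms}: $T_\gamma$ is defined on generators by sending atoms to atoms.
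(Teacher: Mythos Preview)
Your proposal is correct and matches the paper's approach exactly: the paper's proof is the single line ``Follows from Lemma \ref{lemma:simvsequal} and Corollary \ref{cor:ideals_intersect},'' and you have spelled out precisely that deduction. Your extra check that $T_\gamma$ sends unoriented atoms to unoriented atoms is a reasonable sanity remark but is immediate from Definition \ref{def:morphisms}.
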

\begin{proof}
Follows from Lemma \ref{lemma:simvsequal} and Corollary \ref{cor:ideals_intersect}.
\end{proof}

Recall that we call a formula $A$ labelling an edge incident at a conclusion of $\pi$ a \emph{conclusion} of $\pi$. If the sequence of oriented atoms of $A$ is $(U_1,u_1),...,(U_n,u_n)$ then we say that $U_i$ is \emph{positive} in $A$ if $u_i = +$ and otherwise it is \emph{negative}.

\begin{proposition}\label{prop:permutation}
	Let $\pi$ be a proof net with single conclusion $A$, and let
	\begin{equation}\label{eq:oriented_atoms}
		(Z_1,z_1),...,(Z_n,z_n)
	\end{equation}
	be the sequence of oriented atoms of $A$. Then $n = 2m$ is even, there are an equal number of positive and negative atoms, and if $\bold{U} = U_1,\ldots,U_m$ denotes the subsequence of positive unoriented atoms and $\bold{V} = V_1,\ldots,V_m$ the subsequence of negative unoriented atoms then
	\begin{itemize}
	\item[(i)] The inclusions $k[\bold{U}] \lto P_\pi$ and $k[\bold{V}] \lto P_\pi$ followed by the quotient $P_\pi \lto R_\pi$ are isomorphisms $\beta_+,\beta_-$ as in the diagram
	\begin{equation}\label{eq:diagram_U_V_P}
		\begin{tikzcd}
			k[\bold{U}]\arrow[d,dr,"{\beta_+}"]\arrow[d]\\
			P_\pi\arrow[r,twoheadrightarrow] & R_\pi\\
			k[\bold{V}]\arrow[u]\arrow[ur,swap,"{\beta_-}"]
		\end{tikzcd}
	\end{equation}
	\item[(ii)] The composite $\beta_-^{-1}\beta_+: k[\bold{U}] \lto k[\bold{V}]$ is 
\begin{equation}
	\beta_-^{-1}\beta_+(U_i) = V_{\sigma(i)},\quad 1 \leq i \leq m
\end{equation}
for some permutation $\sigma_\pi$ of $\lbrace 1,..., m \rbrace$.
\item[(iii)] Each equivalence class of the relation $\approx$ is the underlying set of a sequence 
\begin{equation}\label{eq:persistent_path_seq}
\mathscr{P} = ( Z_1, \ldots, Z_r )
\end{equation}
where for some $1 \le i \le m$ we have $Z_1 = V_{\sigma(i)}, Z_r = U_i$ and $Z_i \sim Z_{i+1}$ for $1 \le i < r$.
\end{itemize}
\end{proposition}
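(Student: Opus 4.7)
The plan is to prove all three parts simultaneously by induction on a sequent calculus derivation $\tau$ with $\pi = T(\tau)$, working with a strengthened statement applying to multi-conclusion proof nets: every equivalence class of $\approx$ on $U_\pi$ is the underlying set of a sequence $(Z_1, \ldots, Z_r)$ with $Z_j \sim Z_{j+1}$, whose two endpoints $Z_1, Z_r$ lie on conclusion edges of $\pi$ with opposite polarities in the corresponding conclusion formulas.

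The base case of a single axiom rule is immediate, since $G_\ax$ directly pairs each unoriented atom of $A$ on one conclusion edge with its copy of opposite polarity on the other edge. For the $\parr$ rule, adding a par link introduces generators that extend each existing path from its endpoint on a premise edge $A$ or $B$ to a new terminal atom on the new conclusion edge $A \parr B$ of the same polarity; the endpoint-parity property is preserved and no cycles are created. The $\otimes$ rule is analogous, combining two disjoint subnets. For the $\cut$ rule, two subnets $\pi_1$ (with conclusion $\Gamma, A$) and $\pi_2$ (with conclusion $\neg A, \Delta$) are joined by a cut link whose generators $X_i - X'_i$ pair atoms on the $A$ edge of $\pi_1$ with corresponding atoms on the $\neg A$ edge of $\pi_2$ of opposite polarity in their respective formulas. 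Each cut generator splices two previously disjoint paths (one in each subnet) into a single path whose new endpoints lie on the surviving conclusions of $\Gamma \cup \Delta$ and retain opposite polarity.

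With the strengthened statement in hand, part (iii) for single-conclusion proof nets is immediate: every equivalence class has both endpoints in the unique conclusion $A$, one positive (some $U_i$) and one negative (some $V_j$), yielding the sequence $\mathscr{P}$. Part (i) then follows by counting: each equivalence class contains exactly one $U_i$ and one $V_j$, so $n = 2m$; by Lemma \ref{lemma:simvsequal}, $R_\pi$ is isomorphic to the free commutative $k$-algebra on $U_\pi/\approx$, and the composites $\beta_+ : k[\mathbf{U}] \to R_\pi$ and $\beta_- : k[\mathbf{V}] \to R_\pi$ send their generators bijectively to $U_\pi/\approx$, hence are isomorphisms. For part (ii), $\beta_-^{-1}\beta_+(U_i)$ is the unique element $V_{\sigma(i)}$ of $\mathbf{V}$ in the equivalence class of $U_i$, and $\sigma$ is a permutation of $\{1, \ldots, m\}$ because the pairing through equivalence classes is bijective.

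The main obstacle is the cut case of the induction, specifically verifying that no cycle is formed in the $\sim$ graph when gluing the two subnets. A hypothetical cycle would require several cut generators to simultaneously close up a pair of paths in $\pi_1$ and a matching pair in $\pi_2$; ruling this out requires a careful parity and connectivity analysis in each subnet and ultimately reflects the correctness of $\pi$ as a proof net rather than a mere proof structure (compare Example \ref{example:not_pf_net}).
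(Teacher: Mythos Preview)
Your approach differs substantially from the paper's. The paper proceeds by induction on the \emph{weight} of the proof net (the total complexity of cut formulas), which strictly decreases under any reduction. The base case is then a cut-free proof net, where the structure of the $\sim$-graph is transparent: each atom on the conclusion edge traces up through $\otimes,\parr$ links to an axiom link and back down to a conclusion atom of the opposite sign. For the inductive step, given a reduction $\gamma:\pi\to\pi'$, the paper simply invokes the already-established Proposition~\ref{prop:induced_maps} (the isomorphism $\overline{T}_\gamma:R_{\pi'}\cong R_\pi$) and Lemma~\ref{lemma:sep_by_equiv} to transport the result from $\pi'$ back to $\pi$ via a commutative diagram involving $k[\bold{U}]\to P_\pi\to R_\pi$ and $k[\bold{U}]\to P_{\pi'}\to R_{\pi'}$. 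No analysis of how paths behave under the sequent-calculus \emph{cut rule} is ever needed.

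The gap you flag in your cut case is genuine and is not closed by the parity argument you sketch. Suppose a $\pi_1$-path has both endpoints on the $A$-edge, at positions $i,j$ with polarities $x_i\neq x_j$ (as your inductive hypothesis requires). The corresponding atoms on the $\neg A$-edge in $\pi_2$ carry polarities $\bar{x}_i\neq\bar{x}_j$, so a $\pi_2$-path joining exactly those two positions is perfectly consistent with your inductive hypothesis on $\pi_2$---and the two paths together with the two cut generators would form a cycle. Parity alone does not exclude this. Ruling it out requires either a correctness criterion for proof nets (Danos--Regnier acyclicity or similar, not developed in this paper) or an appeal to cut-elimination; the latter is precisely what the paper's weight induction supplies via Proposition~\ref{prop:induced_maps} and Corollary~\ref{cor:ideals_intersect}. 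So while your induction on the derivation handles $\ax$, $\parr$, $\otimes$ cleanly and the deduction of (i),(ii) from (iii) is correct, completing the cut case within your framework would force you to import exactly the machinery the paper's approach gets for free.
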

\begin{proof}
We define the \emph{weight} of a proof net $\pi$ to be the sum, over all $\cut$ links on some formula $A, \neg A$, of the weight $|A|$ of $A$ which is defined by recursively as follows: if $A$ is an atomic proposition then $|A| = 1$, if $A = B \diamondtimes C$ with $\diamondtimes \in \{ \otimes, \parr \}$ then $|A| = |B| + |C| + 1$, and $|\neg A| = |A|$. Note that the weight of a proof net strictly decreases under cut-elimination.

We prove the proposition by induction on the weight. All statements can be treated one persistent path at a time by Lemma \ref{lemma:sep_by_equiv}. In the base case a persistent path proceeds from a negative unoriented atom $V$ in the conclusion up through $\otimes, \parr$ links to an $\ax$ link and then downwards through $\parr, \otimes$ links to the positive unoriented atom $U$ in the conclusion. If the unoriented atoms involved are $V = Z_1, \ldots, Z_r = U$ then \eqref{eq:diagram_U_V_P} is
\begin{equation}
		\begin{tikzcd}
			k[U]\arrow[d,dr,"{\beta_+}"]\arrow[d]\\
			k[Z_1,\ldots,Z_r]\arrow[r,twoheadrightarrow] & k[\bold{Z}]/I\\
			k[V]\arrow[u]\arrow[ur,swap,"{\beta_-}"]
		\end{tikzcd}
	\end{equation}
where $I = (Z_2 - Z_1, \ldots, Z_r - Z_{r-1})$ so it is clear that $\beta_+, \beta_-$ are isomorphisms, and that $\beta_-^{-1} \beta_+(U) = V$ is the map sending the end of each persistent path to its beginning, a permutation of the set of unoriented atoms in the conclusion.

For the inductive hypothesis suppose the proposition is true of $\pi'$ and that there is a reduction $\gamma: \pi \lto \pi'$. Note $\pi'$ has the same conclusion as $\pi$. The claims follow from commutativity of the diagram
\begin{equation}
\xymatrix@C+1pc@R+1pc{
P_\pi \ar[rr] & & R_\pi\\
& k[\bold{U}] \ar[ul]\ar[dl] \ar[ur]^-{\beta_+} \ar[dr]_-{\beta_+}\\\
P_{\pi'} \ar[uu]^-{T_\gamma} \ar[rr] & & R_{\pi'} \ar[uu]^-{\cong}_-{\overline{T}_\gamma}
}
\end{equation}
and a similar one for $\bold{V}$.
\end{proof}

\begin{defn}\label{defn:persistent_path} Let $\pi$ be a proof net with single conclusion $A$. The sequences $\mathscr{P}$ of \eqref{eq:persistent_path_seq} whose underlying sets are the equivalence classes of $\approx$ are called \emph{persistent paths}.
\end{defn}

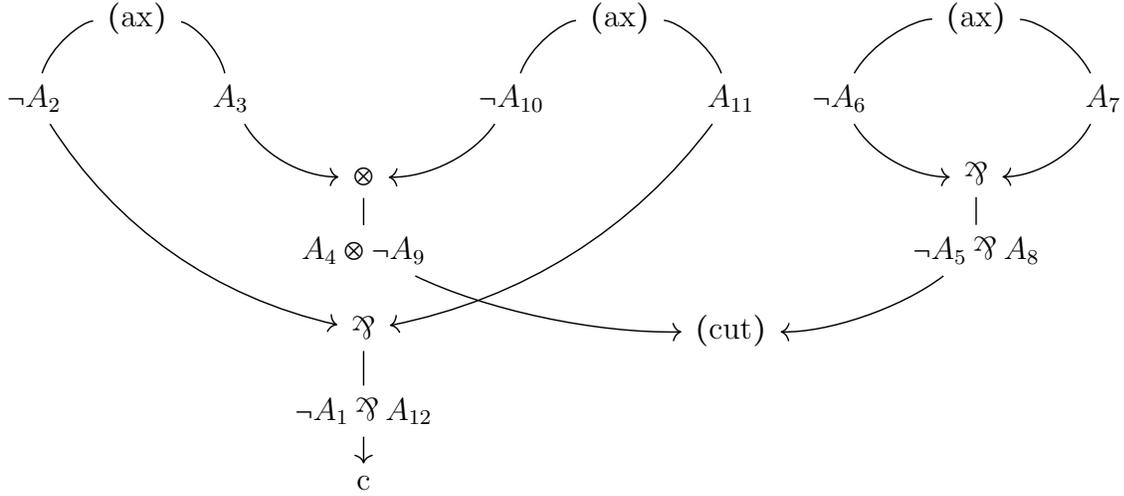
\begin{figure}
$
\begin{tikzcd}[column sep = tiny, row sep = small]
		& \ax &&&& \ax &&& \ax \\
		{\neg A_2} && A_3 && \neg A_{10} && A_{11} & \neg A_6 && A_7 \\
		&&& \otimes &&&&& \parr \\
		&&& A_4 \otimes \neg A_9 &&&&& \neg A_5 \parr A_8 \\
		&&& \parr &&& \cut\\
		&&& \neg A_1 \parr A_{12}\\
		&&& \operatorname{c}
		\arrow[curve={height=12pt}, no head, from=1-2, to=2-1]
		\arrow[curve={height=-12pt}, no head, from=1-2, to=2-3]
		\arrow[curve={height=22pt}, from=2-1, to=5-4]
		\arrow[curve={height=12pt}, no head, from=1-6, to=2-5]
		\arrow[curve={height=-12pt}, no head, from=1-6, to=2-7]
		\arrow[curve={height=-22pt}, from=2-7, to=5-4]
		\arrow[curve={height=-12pt}, from=2-5, to=3-4]
		\arrow[curve={height=12pt}, from=2-3, to=3-4]
		\arrow[curve={height=12pt}, no head, from=1-9, to=2-8]
		\arrow[curve={height=-12pt}, no head, from=1-9, to=2-10]
		\arrow[curve={height=-12pt}, from=2-10, to=3-9]
		\arrow[curve={height=12pt}, from=2-8, to=3-9]
		\arrow[no head, from=3-9, to=4-9]
		\arrow[curve={height=-12pt}, from=4-9, to=5-7]
		\arrow[curve={height=12pt}, from=4-4, to=5-7]
		\arrow[no head, from=3-4, to=4-4]
		\arrow[no head, from=5-4,to=6-4]
		\arrow[from=6-4,to=7-4]
	\end{tikzcd}
$
\caption{The canonical detour as a proof net, with $A_i = A = (X, +)$ for all $i$.}
\label{fig:detour_canonical}
\end{figure}

\begin{example}\label{example:canonical_detour} The proof net $\pi$ in Figure \ref{fig:detour_canonical} is the canonical detour \eqref{eq:intro_deduction_d} of the introduction with $B = A$, $\multimap$ written in terms of $\parr$ and the multiple conclusions combined with $\parr$ to a single conclusion. With $A_i = A = (X,+)$ for $1 \le i \le 12$ the set of unoriented atoms of $\pi$ is a disjoint union of twelve copies of $\{ X \}$ and we name the atoms $X_1,\ldots,X_{12}$. 

In the notation of the proposition, $\bold{U} = \{ X_{12} \}, \bold{V} = \{ X_1 \}$ and the single persistent path of $\pi$ is $X_1, X_2, \ldots, X_{12}$. Note that the persistent path goes ``through'' the $\cut$ link twice, once in each direction.
\end{example}

\begin{remark}\label{rmk:persistency}
By the definition of persistent paths, it follows from Proposition \ref{prop:induced_maps} that given any reduction $\gamma: \pi \lto \pi'$ and persistent path of a proof structure $\pi$, there is an associated persistent path $\scr{P}'$ in $\pi'$ given by removing the variables which occur in $\pi$ but not in $\pi'$ from $\scr{P}$. This will be used in the proofs of Theorems \ref{thm:elimination_ours}, \ref{thm:execution}.
\end{remark}

%\begin{example}\label{ex:what_goes_wrong}
%	We give an example of what can go wrong if Proposition \ref{prop:permutation} is applied to a proof \emph{structure} which is not a proof \emph{net}.
	
%	We consider $\rho$ \eqref{eq:proof_net_not} of Example \ref{ex:proof_net_not}. It was shown there that the coordinate ring $P_\rho$ is isomorphic to a polynomial ring over a single variable. That is, there is no partition of the variables into an equal number of positive and negative unoriented atoms.
	
%	The key property of a proof net which allows Proposition \ref{prop:permutation} to hold is that all persistent paths of proof nets start and end at conclusion links. For proof \emph{structures} more generally, there may be persistent paths which never meet a conclusion link, and so loop indefinitely. See \cite[Corollary 6.2.11]{Troiani}.
%	\end{example}

\begin{defn}\label{defn:reduction_sequence} A \emph{reduction sequence} $\Gamma: \pi \lto \pi'$ between proof structures $\pi, \pi'$ is a nonempty sequence of reductions (Definition \ref{def:reduction})
	\begin{equation}\label{eq:reduction_sequence_defn}
		\begin{tikzcd}[column sep = large]
			\pi = \pi_1\arrow[r,"{\gamma_1}"] & \cdots\arrow[r,"{\gamma_{n-1}}"] & \pi_n = \pi'\,.
			\end{tikzcd}
		\end{equation}
	This induces a sequence of $k$-algebra morphisms
	\begin{equation}
		\begin{tikzcd}[column sep = large]
			P_{\pi'}\arrow[r,"{T_{\gamma_{n-1}}}"] & \cdots\arrow[r,"{T_{\gamma_{1}}}"] & P_{\pi}
			\end{tikzcd}
		\end{equation}
	the composite of which we denote by $T_\Gamma: P_{\pi'} \lto P_{\pi}$.
	\end{defn}
	
\section{Elimination Theory}\label{section:elim_theory}

%Given a reduction $\gamma: \pi \lto \pi'$ of proof nets where $\pi, \pi'$ have conclusion $A$ we view $P_{\pi'}$ as a subring of $P_\pi$ via the $k$-algebra morphism $T_\gamma$. By Corollary \ref{cor:ideals_intersect} we have $I_{\pi'} = I_\pi \cap P_{\pi'}$. We explain how the Buchberger algorithm together with the Elimination Theorem computes an explicit form of $I_{\pi'}$ from $I_\pi$. This is the formal connection between cut-elimination for multiplicative proof nets and elimination theory advertised in the Introduction.

%In Section \ref{section:buchberger_mod} we introduce a form of Buchberger's algorithm suited for elimination theory, and state the main result from Elimination Theory. In Section \ref{sec:monomial_order} we define the monomial order $<_\gamma$ depending on $\gamma$ appearing in the main theorem, which is proven in Section \ref{section:main_theorem}.

\subsection{Buchberger's algorithm}\label{section:buchberger_mod}

%Given the reduction $\gamma: \pi \lto \pi'$ of proof nets with conclusion $A$ we have defined a monomial order $<_\gamma$ on the polynomial ring $P_\pi$ (Definition \ref{defn:monomial_order}) and a sequence of polynomials $G^{(\gamma)}_\pi$ generating the ideal $I_\pi$ . 

The Buchberger algorithm computes, given a generating sequence for an ideal $I$, a Gröbner basis for that ideal. There are a number of variations on this algorithm in the literature. We present here an optimisation of the algorithm of \cite[\S 2.10 Theorem 9]{Grobner} for the purposes of Elimination Theory, using a form of Euclidean Division with ``early stopping'' (Algorithm \ref{alg:division_adapted}). 

In this section $k$ is a field and $k[X_1,\ldots,X_n]$ a polynomial ring with a monomial order. We use the terminology of leading terms $\LT$, leading monomials $\LM$, leading coefficient $\operatorname{LC}$, and multidegree \cite[\S 2]{Grobner}. 

\begin{algorithm}
	\caption{Euclidean Division with Early Stopping}\label{alg:division_adapted}
	\begin{algorithmic}
		\Require $(f_1,\ldots,f_s),f$
		\State $p \gets f$
		\State $q_1,\ldots, q_s \gets 0,\ldots 0$
		\State $r \gets 0$
		\While{$p \neq 0$}
		\State $\text{DivOcc} \gets \texttt{False}$
		\State $i \gets 1$
		\While{$i \leq s \text{ and } \text{DivOcc} = \texttt{false}$}
		\If{$\operatorname{LT}f_i | \operatorname{LT}p$}
		\State $q_i \gets q_i + \operatorname{LT}p/\operatorname{LT}f_i$
		\State $p \gets p - (\operatorname{LT}p/\operatorname{LT}f_i)f_i$
		\State $\text{DivOcc} \gets \texttt{True}$
		\Else 
		\State $i \gets i + 1$
		\EndIf
		\EndWhile
		\If{$\operatorname{DivOcc} = \texttt{false}$}
		\State $r \gets p$
		\State $p \gets 0$
		\EndIf
		\EndWhile\\
		\Return{$(q_1,\ldots,q_s, r)$}
	\end{algorithmic}
\end{algorithm}

\begin{lemma}\label{lemma:division_property_alt} Let $G = (f_1,\ldots,f_s)$ be a sequence of elements of $k[X_1,\ldots,X_n]$. Given $f \in k[X_1,\ldots,X_n]$ let $(q_1,\ldots,q_s,r)$ be the output of Algorithm \ref{alg:division_adapted}. Then
\begin{equation}\label{eq:wosaj}
f = q_1 f_1 + \cdots + q_s f_s + r
\end{equation}
and if $r \neq 0$ the leading term of $r$ is not divisible by any $\LT(f_i)$ with $1 \le i \le s$. Furthermore, if $q_i f_i \neq 0$ then $\operatorname{multideg}(f) \ge \multideg(q_i f_i)$.
\end{lemma}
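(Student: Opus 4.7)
The plan is to argue by loop invariant, with one invariant covering correctness (claim \eqref{eq:wosaj}) and bounding the multidegree (claim on $q_i f_i$), and a straightforward inspection handling the leading-term claim on $r$.

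First I would establish the invariant
\[
f = q_1 f_1 + \cdots + q_s f_s + p + r, \qquad r = 0 \text{ or } p = 0,
\]
maintained at the top of the outer \textbf{while} loop. At initialisation $p = f$, $q_i = 0$, $r = 0$, so the equality holds and $r = 0$. Inside one iteration, exactly one of two things happens: either some $\LT(f_i) \mid \LT(p)$ is found, in which case $q_i$ is increased by $\LT(p)/\LT(f_i)$ and $p$ is decreased by $(\LT(p)/\LT(f_i)) f_i$, so the sum $q_1 f_1 + \cdots + q_s f_s + p$ is unchanged and $r$ is untouched; or no such $i$ exists, in which case we move $p$ into $r$ and set $p = 0$, again preserving the total sum. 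Because termination occurs exactly when $p = 0$, at the end the invariant reduces to \eqref{eq:wosaj}.

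For the leading-term property of $r$, I would simply note that $r$ is modified only in the branch where $\text{DivOcc} = \texttt{false}$ after the inner \textbf{while} loop has tried every $i$ from $1$ to $s$ without finding $\LT(f_i) \mid \LT(p)$. Hence at the moment of assignment $\LT(p)$ is divisible by none of the $\LT(f_i)$, and since $r$ is assigned $p$ and then $p$ is zeroed (so $r$ is never modified again), $\LT(r) = \LT(p)$ inherits this property.

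For the multidegree bound on $q_i f_i$, I would track a second invariant: at every point in execution $\multideg(p) \le \multideg(f)$, with strict inequality after the first reduction step on the divisor branch. Indeed $\multideg(p)$ strictly decreases every time the divisor branch fires, since $p$ loses its leading term. Now each summand added to $q_i$ is a term $\LT(p)/\LT(f_i)$ for some value of $p$ during execution, so its product with $f_i$ has leading term $\LT(p)$ and thus multidegree $\multideg(p) \le \multideg(f)$. Because all these contributions have \emph{distinct} multidegrees $\multideg(p)$ (they strictly decrease across iterations in which $f_i$ is used), no cancellation occurs among their leading terms, and so $\multideg(q_i f_i)$ equals the maximum such $\multideg(p)$, which is $\le \multideg(f)$.

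The only genuinely subtle point, and the place I would be most careful, is the last observation: one has to rule out cancellation between the contributions to a single $q_i$. This is guaranteed precisely by the strict decrease of $\multideg(p)$ across the divisor branch (each such step replaces $\LT(p)$ by lower-order terms), so the summands in $q_i$ have strictly decreasing multidegrees and therefore so do the corresponding summands of $q_i f_i$; in particular $\multideg(q_i f_i)$ is realised by whichever summand appeared first and is bounded by $\multideg(f)$.
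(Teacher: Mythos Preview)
Your proof is correct and follows essentially the same approach as the paper: a loop invariant $f = \sum_i q_i f_i + p + r$ for \eqref{eq:wosaj}, inspection of the $\text{DivOcc} = \texttt{false}$ branch for the claim on $\LT(r)$, and the strict decrease of $\multideg(p)$ for the multidegree bound. The paper's own proof is terser, deferring termination and the multidegree bound to the standard division algorithm in \cite[\S 2.3]{Grobner}, whereas you spell out the no-cancellation argument for $q_i f_i$ explicitly; this is a welcome elaboration rather than a different route.
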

\begin{proof}
The expression $f = \sum_{i=1}^s q_i f_i + p + r$ holds at the end of every iteration of the outer while loop, and it terminates with $p = 0$ so \eqref{eq:wosaj} holds. If the algorithm returns $r \neq 0$ then it came from $r \gets p$ with $\operatorname{DivOcc} = \texttt{false}$ so the claim about divisibility of $\LT(r)$ follows. Termination and the other property follow as in the proof of the standard division algorithm \cite[\S 2.3]{Grobner}.
\end{proof}

\begin{defn} Let $G = (f_1,\ldots,f_s)$ be a sequence of polynomials and $f$ a polynomial.
We denote by $\ediv{f}{G}$ (resp. $\edives{f}{G}$) the remainder $r$ produced by the Euclidean division (resp. Algorithm \ref{alg:division_adapted}).
\end{defn}

To state our variation of Buchberger's algorithm we need some notation. For $i \neq j$ set
\[
[i,j] = \begin{cases} (i,j) &\text{if } i \le j\\ 
(j,i) &\text{if } i \ge j \end{cases}
\]
Recall that the \emph{$S$-polynomial} of $g,h \in k[X_1,\ldots,X_n]$ is
	\begin{equation}
		S(g,h) := \frac{X^\gamma}{\operatorname{LT}(g)}g - \frac{X^\gamma}{\operatorname{LT}(h)}h\,.
	\end{equation}
where $X^\gamma = \operatorname{LCM}(\LM(g),\LM(h))$. The only significant difference between Algorithm \ref{alg:elimination} below and the form of Buchberger's algorithm in \cite[\S 2.10 Theorem 9]{Grobner} is that we replace the line $S \gets \ediv{S(f_i,f_j)}{G}$ using normal Euclidean division with the line $S \gets \edives{S(f_i, f_j)}{G}$ using the division algorithm with early stopping. We also divide $S$ by its leading coefficient $\operatorname{LC}(S)$ before adding it to $G$, but this is insignificant.

%\begin{lemma}\label{lem:buchbergers_criterion}
%	Let $G = (f_1,\ldots, f_n)$ be a sequence of polynomials in $k[x_1,\ldots, x_n]$, denote by $\hat{G}$ the underlying set and $\langle \hat{G} \rangle$ the ideal generated by $\hat{G}$. The set $\hat{G}$ is a Gr\"{o}bner basis for $\langle \hat{G} \rangle$ if and only if for all $f_i,f_j \in G$ with $i < j$, division of $S(f_i, f_j)$ by $G$ gives remainder $0$.
%	\end{lemma}
%\begin{proof}
%	See \cite[Theorem 6, page 86]{Grobner}.
%	\end{proof}

\begin{algorithm}
	\caption{Buchberger with Early Stopping}\label{alg:elimination}
	\begin{algorithmic}
		\Require $F = (f_1,\ldots,f_s)$, returns a Gröbner basis for $\langle f_1,\ldots,f_s \rangle$.
		\State $B \gets \{ (i,j) \l 1 \le i < j \le s \}$
		\State $G \gets F$
		\State $t \gets s$
		\While{$B \neq \emptyset$}
		\State Let $(i,j) \in B$ be first in the lexicographic order.
		\If{$\operatorname{LCM}(\LM(f_i), \LM(f_j)) \neq \LM(f_i) \LM(f_j)$ \textbf{and} $\operatorname{Criterion}(f_i, f_j, B)$ \textbf{is false}}
		\State $S \gets \edives{S(f_i, f_j)}{G}$
		\If{$S \neq 0$}
		\State $t \gets t + 1$
		\State $f_t \gets \frac{1}{\operatorname{LC}(S)}S$
		\State $G \gets G \cup \{ f_t \}$
		\State $B \gets B \cup \{ (i,t) \l 1 \le i \le t - 1 \}$
		\EndIf
		\EndIf
		\State $B \gets B \setminus \{(i,j)\}$
		\EndWhile\\
	\Return{$G$}
	\end{algorithmic}
	where $\operatorname{Criterion}(f_i, f_j, B)$ is true provided that there is some $k \notin \{i,j\}$ for which the pairs $[i,k]$ and $[j,k]$ are \emph{not} in $B$ and $\LM(f_k)$ divides $\operatorname{LCM}(\LM(f_i),\LM(f_j))$.
\end{algorithm}

\begin{defn}
Given a set $G = \{ f_1, \ldots, f_s \}$ of polynomials we say that $f$ \emph{reduces to zero modulo $G$}, written $f \longrightarrow_G 0$ if $f$ can be written in the form $f = a_1 f_1 + \cdots + a_s f_s$ such that whenever $a_i f_i \neq 0$ we have $\operatorname{multideg}(f) \ge \operatorname{multideg}(a_if_i)$ (see \cite[\S 2.9 Definition 1]{Grobner}).
\end{defn}

\begin{proposition}
Algorithm \ref{alg:elimination} terminates and the output is a Gr\"{o}bner basis.
	\end{proposition}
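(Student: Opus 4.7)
The plan is to follow the standard correctness and termination proof for Buchberger's algorithm \cite[\S 2.7 Theorem 2, \S 2.10 Theorem 9]{Grobner}, checking that the only modification — replacing $\ediv{\cdot}{G}$ by $\edives{\cdot}{G}$ — does not break the argument. Correctness will reduce, via Buchberger's criterion, to verifying that for every pair $(i,j)$ examined during the main loop, $S(f_i, f_j) \longrightarrow_G 0$ with respect to the final $G$. Termination will reduce to showing that only finitely many new polynomials are ever appended to $G$.

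For termination, I would observe that whenever a new generator $f_t$ is added, $f_t$ is a scalar multiple of the remainder $S$ produced by Algorithm \ref{alg:division_adapted}. Lemma \ref{lemma:division_property_alt} guarantees that $\LT(S)$ is not divisible by any $\LT(f_k)$ for $k<t$, so $\LM(f_t) \notin \langle \LM(f_1),\ldots,\LM(f_{t-1}) \rangle$. Hence the monomial ideals $\langle \LM(f_1),\ldots,\LM(f_t) \rangle$ form a strictly ascending chain, and by the Noetherian property of $k[X_1,\ldots,X_n]$ only finitely many $f_t$ can be added. Once no further $f_t$ is appended, $B$ only shrinks and the main loop ends.

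For correctness, at termination $\langle G \rangle = I$, since every appended $f_t$ lies in $I$ by the division identity of Lemma \ref{lemma:division_property_alt}. By Buchberger's criterion it suffices to verify $S(f_i,f_j) \longrightarrow_G 0$ for each pair with $i<j$. For such a pair one of three cases occurred during execution: (i) $\operatorname{LCM}(\LM(f_i),\LM(f_j)) = \LM(f_i)\LM(f_j)$, handled by the coprimality argument \cite[\S 2.9 Proposition 4]{Grobner}; (ii) $\operatorname{Criterion}(f_i,f_j,B)$ held, handled by the standard chain criterion \cite[\S 2.10]{Grobner}; or (iii) the algorithm computed $S \gets \edives{S(f_i,f_j)}{G}$, and Lemma \ref{lemma:division_property_alt} supplied a representation $S(f_i,f_j) = \sum_k q_k f_k + S$ with $\multideg(q_k f_k) \le \multideg(S(f_i,f_j))$ whenever $q_k f_k \neq 0$. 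If $S = 0$ this is already the sort of representation witnessing $S(f_i,f_j) \longrightarrow_G 0$. If $S \neq 0$ then $f_t := S/\operatorname{LC}(S)$ was appended to $G$ and the identity rewrites as $S(f_i,f_j) = \sum_k q_k f_k + \operatorname{LC}(S) f_t$, witnessing $S(f_i,f_j) \longrightarrow_G 0$ modulo the enlarged $G$, which is contained in the final $G$.

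The main obstacle will be supplying one bookkeeping observation absent from Lemma \ref{lemma:division_property_alt}: that the remainder $r$ produced by Algorithm \ref{alg:division_adapted} satisfies $\multideg(r) \le \multideg(f)$, which is needed to declare the representation in case (iii) a valid witness of $\longrightarrow_G 0$ when $S \neq 0$. This is immediate from inspection of the algorithm, since the running polynomial $p$ begins equal to $f$ and each update $p \gets p - (\LT(p)/\LT(f_i))f_i$ either strictly decreases $\multideg(p)$ or leaves it unchanged, so when early stopping assigns $r \gets p$ the bound holds. With this observation in hand the classical optimized Buchberger proof adapts verbatim.
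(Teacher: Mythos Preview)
Your proposal is correct and follows essentially the same approach as the paper: both adapt the proof of \cite[\S 2.10 Theorem 9]{Grobner}, checking that replacing Euclidean division by the early-stopping variant preserves termination and the property $S(f_i,f_j)\longrightarrow_G 0$ for processed pairs. The only presentational difference is that the paper maintains the loop invariant ``$(i,j)\notin B$ implies $S(f_i,f_j)\longrightarrow_G 0$ or $\operatorname{Criterion}(f_i,f_j,B)$'' explicitly, whereas you organise the argument as a case analysis at termination; and for the $S\neq 0$ subcase the paper argues that $\edives{S(f_i,f_j)}{G'}=0$ directly, while you instead supply the bound $\multideg(r)\le\multideg(f)$ --- both routes are valid and equally short.
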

\begin{proof}
Algorithm \ref{alg:elimination} is a variation on \cite[\S 2.10 Theorem 9]{Grobner} and the proof here is similar. Suppose that in some pass through the while loop, $G$ is enlarged to $G \cup \{ f_t \}$. By Lemma \ref{lemma:division_property_alt} we have that $\LT(f_t)$ is not divisible by $\LT(f)$ for any $f \in G$ and therefore $\langle \LT(G) \rangle$ is a proper subset of $\langle \LT(G \cup \{ f_t \}) \rangle$. Since $k[X_1,\ldots,X_n]$ is Noetherian this means that at some point $G$ stops growing, but then in each subsequent step the size of $B$ decreases by one. Hence the Algorithm terminates in finitely many steps.
	
To prove the output is a Gröbner basis, we first establish the following claim: at the end of every pass through the while loop, $B$ has the property that if $1 \le i < j \le t$ and $(i,j) \notin B$ then
\begin{equation}\label{eq:two_cond}
S(f_i, f_j) \longrightarrow_G 0 \quad \text{ or } \quad \operatorname{Criterion}(f_i, f_j, B) \text{ holds}\,.
\end{equation}
This is initially true since $B$ is the set of all possible pairs. To prove the claim we need to argue that if it holds for $B$ then it continues to hold when $B$ changes to $B'$.

To prove this assume $(i,j) \notin B'$. If $(i,j) \in B$ then $B' = B \setminus \{(i,j)\}$. In the step before $(i,j)$ is removed if either $\operatorname{LCM}(\LT(f_i),\LT(f_j)) = \LT(f_i) \LT(f_j)$ or $\operatorname{Criterion}(f_i,f_j,B)$ is true then one argues as in the original proof from \cite{Grobner}. If not then the algorithm computes $S \gets \edives{S(f_i,f_j), G}$. If $S = 0$ then $S(f_i,f_j) \longrightarrow_G 0$ by Lemma \ref{lemma:division_property_alt}. If $S \neq 0$ then we enlarge $G$ to $G' = G \cup \{ \frac{1}{\operatorname{LC}(S)} S \}$. Examining the division algorithm with early stopping it clear that $\edives{S(f_i, f_j)}{G'} = 0$ so $S(f_i,f_j) \longrightarrow_{G'} 0$ and \eqref{eq:two_cond} holds for $B',G'$.

It remains to consider the case $(i,j) \notin B$. Then \eqref{eq:two_cond} holds for $B$. Let $G, G'$ be the state of the sequence of polynomials in the two steps, so $G' = G$ or $G'$ has one additional element. If $S(f_i, f_j) \longrightarrow_G 0$ then $S(f_i, f_j) \longrightarrow_{G'} 0$. If $\operatorname{Criterion}(f_i, f_j, B)$ holds then there is some $k \in \{i,j\}$ with $k \le |G|$ such that $[i,k],[j,k]$ are not in $B$ and $\LT(f_k)$ divides $\operatorname{LCM}(\LT(f_i),\LT(f_j))$. Any new pairs in $B'$ involve $|G| + 1$ so $[i,k],[j,k] \notin B'$ and hence $\operatorname{Criterion}(f_i,f_j,B')$ holds.

The argument that $G$ is a Gröbner basis when $B = \emptyset$ goes through as before.
\end{proof}

\begin{defn} Given a sequence $F = (f_1,\ldots,f_s)$ of polynomials and a monomial order $<$ on $k[X_1,\ldots,X_n]$ we denote by $\mathbb{B}_{es}(F, <)$ the output of Algorithm \ref{alg:elimination}.
\end{defn}

% Knowing a Gröbner basis gives us an effective algorithm for deciding whether or not an element $f$ belongs to an ideal $I \subseteq k[x_1,\ldots,x_n]$ in the following sense: if $G$ is a Gröbner basis for $I$ (say obtained by the Buchberger algorithm applied to any generating set for $I$) then $f \in I$ if and only if $\ediv{f}{G} = 0$ \cite[\S 2.6 Corollary 2]{Grobner}. While Gröbner bases are not unique, every nonzero ideal has a unique \emph{reduced} Gröbner basis \cite[\S 2.7 Definition 5]{Grobner}.

We state the main theorem of Elimination Theory. Let $\bold{X} = \{X_1,\ldots,X_n\}$ and $\bold{Y} = \{Y_1,\ldots,Y_m\}$ be variables. We suppose given an ideal $I \subseteq k[\bold{X}, \bold{Y}]$ and we are interested to know the equations between the $\bold{X}$-variables that are implied by the equations in $I$. We call this set of equations the elimination ideal:

\begin{defn} The \emph{elimination ideal} of $I$ is the ideal $I \cap k[\bold{X}]$ in $k[\bold{X}]$.
\end{defn}

In the situation of Section \ref{section:intro_example} we can think of the $\bold{X}$-variables as living at the ``boundary'' of the proof net, and $\bold{Y}$ as being the ``interior'' variables.

\begin{thm}[The Elimination Theorem]\label{thm:elimination} Let $I \subseteq k[\bold{X}, \bold{Y}]$ be an ideal and $G$ a Gröbner basis of $I$ with respect to a lexicographic order where $X_i < Y_j$ for all $1 \le i \le n, 1 \le j \le m$. Then $G \cap k[\bold{X}]$ is a Gröbner basis for $I \cap k[\bold{X}]$.
\end{thm}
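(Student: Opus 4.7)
The plan is to verify the Gröbner basis condition directly: given any nonzero $f \in I \cap k[\bold{X}]$, I will produce $g \in G \cap k[\bold{X}]$ such that $\operatorname{LT}(g) \mid \operatorname{LT}(f)$. This, together with the obvious inclusion $G \cap k[\bold{X}] \subseteq I \cap k[\bold{X}]$, is exactly what it means for $G \cap k[\bold{X}]$ to be a Gröbner basis for the elimination ideal with respect to the induced lex order on $k[\bold{X}]$.

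The first step is immediate from the hypothesis. Since $f \in I$ and $G$ is a Gröbner basis of $I$ in $k[\bold{X}, \bold{Y}]$, there exists some $g \in G$ with $\operatorname{LT}(g) \mid \operatorname{LT}(f)$. Because $f \in k[\bold{X}]$ its leading term $\operatorname{LT}(f)$ already lies in $k[\bold{X}]$, so the monomial $\operatorname{LM}(g)$ is a divisor of a monomial in $k[\bold{X}]$ and therefore involves only the $\bold{X}$-variables.

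The key step, and the main point of the proof, is to upgrade this from ``$\operatorname{LT}(g) \in k[\bold{X}]$'' to ``$g \in k[\bold{X}]$.'' This is precisely where the choice of lexicographic order with $X_i < Y_j$ for all $i,j$ is used: under the lex order monomials are compared by reading the exponent vector from the largest variable downward, so any monomial containing a positive power of some $Y_j$ strictly exceeds every monomial built from the $X_i$ alone. Consequently, if $g$ contained any term with a $\bold{Y}$-variable, that term would be lex-larger than $\operatorname{LT}(g)$, contradicting the definition of leading term. Hence every term of $g$ lies in $k[\bold{X}]$, so $g \in G \cap k[\bold{X}]$, as required.

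The main (and really only) obstacle is making this lex-order observation rigorous; everything else is a direct unwinding of the Gröbner basis definition. I would dispatch it by fixing the convention, say $X_1 < \cdots < X_n < Y_1 < \cdots < Y_m$, writing an arbitrary monomial as $\bold{X}^\alpha \bold{Y}^\beta$, and noting that in the lex comparison of two monomials one looks first at the $\beta$-coordinates, so any monomial with $\beta \neq 0$ dominates every monomial with $\beta = 0$. This observation immediately yields that $g$ is $\bold{Y}$-free and finishes the argument.
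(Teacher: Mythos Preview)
Your proof is correct and is essentially the standard argument found in Cox--Little--O'Shea \cite[\S 3.1 Theorem~2]{Grobner}; the paper itself does not give a proof but simply refers the reader to that source, so there is nothing further to compare.
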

\begin{proof}
See \cite[\S 3.1 Theorem $2$]{Grobner}.
\end{proof}

\subsection{Buchberger and Falling Roofs}\label{section:falling_roofs}

We introduce graphical notation for understanding the operation of Algorithm \ref{alg:elimination} on a simple class of ideals. Fix a polynomial ring $k[X_1,\ldots,X_n]$, let $<$ be a total order on the set $\{X_1,\ldots,X_n\}$ and take the lexicographic monomial order determined by $<$. Let $\sigma$ be the permutation uniquely defined by
\[
X_{\sigma^{-1} 1} < X_{\sigma^{-1} 2} < \cdots < X_{\sigma^{-1} n}\,.
\]
The position of $X_i$ in this sequence is $\sigma(i)$. We view $\sigma$ as assigning a \emph{height} to variables:

\begin{defn} The \emph{realisation} of $<$ is the oriented graph $\mathscr{R}_<$ with vertices
\begin{equation}\label{eq:vertices_realisation}
\big\{(i, \sigma i) \l 1 \le i \le n \big\} \subseteq \mathbb{R}^2
\end{equation}
with an edge between $(i, \sigma i), (i+1, \sigma(i+1))$ for $i < n$, and $(i, \sigma i)$ decorated with $X_i$. The orientation of the edge is from $(i, \sigma i)$ to $(j, \sigma j)$ if $X_i < X_j$.
\end{defn}

\begin{defn} A \emph{$<$-graph} is an oriented graph on the vertex set \eqref{eq:vertices_realisation} with the property that if there is an edge from a vertex $(i, \sigma i)$ decorated with $X_i$ to a vertex decorated with $X_j$ then $X_i < X_j$.
\end{defn}

\begin{defn} A $<$-graph is \emph{linear} if every vertex has valence at most two.
\end{defn}

Clearly $\mathscr{R}_{<}$ is a linear $<$-graph. In this section we simply write \emph{graph} for $<$-graph, since the ordering $<$ on the variables is fixed. We write $e: X_i \lto X_j$ for an edge from a vertex decorated with $X_i$ to a vertex decorated with $X_j$.

\begin{example}\label{example_allowedgraph_test} Let $X_1, \ldots, X_6$ be ordered by $X_5 < X_1 < X_6 < X_3 < X_2 < X_4$. Then $\mathscr{R}_<$ is
\begin{equation}\label{eq:realisation_1}
\begin{tikzcd}[row sep = small, column sep = small]
			& & & X_4 \\
			& X_2 \\
			& & X_3\\
			& & & & & X_6 \\
			X_1\\
			& & & & X_5\\
			\arrow[from=5-1, to=2-2]
			\arrow[from=3-3, to=2-2]
			\arrow[from=3-3, to=1-4]
			\arrow[from=6-5, to=1-4]
			\arrow[from=6-5, to=4-6]
		\end{tikzcd}
\end{equation}
\end{example}

\begin{remark}\label{remark:xaxis_vs_yaxis} The realisation involves two orderings on the variables: the order reading along the $x$-axis (the $X_1,\ldots,X_n$ order) and the order reading along the $y$-axis (the $<$-order). The realisation is a graphical presentation of the relation between these orders.
\end{remark}

\begin{defn} A \emph{roof} in a graph $\mathscr{S}$ is an ordered pair $(e,e')$ of edges $e: X_i \lto X_l, e': X_k \lto X_l$ with the same endpoint $X_l$ and $X_i < X_k$. We call $X_l$ the \emph{tip} of the roof.
\end{defn}

%\begin{remark} Two distinct roofs in a linear graph do not have any edges in common.
%\end{remark}

%In what follows $\mathscr{S}$ is an allowed graph for $<$. %By an \emph{allowed subgraph} $\mathscr{S}$ of $\mathscr{R}_{<}$ we will mean an allowed graph whose edge set is a subset of the edges in $\mathscr{R}_{<}$. 

\begin{defn}\label{defn:gen_set_allowed} Given a graph $\mathscr{S}$ we define
\[
G_{\mathscr{S}} = \big\{ X_j - X_i \l e: X_i \lto X_j \text{ is an edge in } \mathscr{S} \big\}\,.
\]
\end{defn}

\begin{defn} Let $\mathscr{S}$ be a graph with edges $e: X_i \lto X_j, e': X_k \lto X_l$. Then $e < e'$ if the pair $(X_j, X_i)$ precedes $(X_l, X_k)$ lexicographically, that is, either $X_j < X_l$ or $X_j = X_l$ and $X_i < X_k$. This is a total order on the set of edges of $\mathscr{S}$.
\end{defn}

The map sending $e: X_i \lto X_j$ to $X_j - X_i$ is a bijection between the edge set of $\mathscr{S}$ and $G_{\mathscr{S}}$ and we give the latter set the total order induced by this bijection and the total order on the former set just defined.

\begin{defn} A roof $(e,e')$ precedes a roof $(d, d')$ if $e < d$ or $e = d$ and $e' < d'$.
\end{defn}

We will only consider roofs in linear graphs: since in a linear graph distinct roofs have distinct tips, $(e,e') < (d,d')$ simply means that $e < d$, or what is the same, the tip of the first roof is lower in the realisation than the tip of the second.

%\begin{example}\label{example:roofs_in_realisation_1} The realisation in \eqref{eq:realisation_1} of Example \ref{example_allowedgraph_test} has two roofs, the roof with tip $X_2$ preceding the one with tip $X_4$. With $\mathscr{S} = \mathscr{R}_{<}$ the associated set of polynomials is
%\[
%G_{\mathscr{S}} = \{ X_2 - X_1, X_2 - X_3, X_4 - X_3, X_4 - X_5, X_6 - X_5 \}\,.
%\]
%\end{example}

We now explain the operation of Algorithm \ref{alg:elimination} on input $(G_{\mathscr{S}}, <)$ for a linear graph $\mathscr{S}$ by giving an ``isomorphic'' algorithm (Algorithm \ref{alg:elimination_roofs_falling} below) which is organised around roofs and is easier to understand. The algorithm works by marking a subset of edges in a graph $\mathscr{N}$ derived from $\mathscr{S}$ as \emph{dead} (an edge that is not dead is \emph{live}). A roof is \emph{live} if both the edges in it are live. We indicate dead edges diagrammatically by dotted lines.

\begin{algorithm}
	\caption{Falling Roofs}\label{alg:elimination_roofs_falling}
	\begin{algorithmic}
		\Require Linear graph $\mathscr{S}$
		\State $\mathscr{N} \gets \mathscr{S}$
		\State Mark all edges in $\mathscr{N}$ as live
		\While{$\mathscr{N}$ contains a live roof}
		\State $(e,e') \gets$ the first live roof in $\mathscr{N}$
		\State Mark $e,e'$ as dead
		\State If it does not exist, add to $\mathscr{N}$ a live edge $d$ as shown below:
		\begin{equation}\label{eq:divisor_roof_0}
		\xymatrix{
			& \bullet\\
			\bullet \ar@{.>}[ur]^-{e}\ar[rr]_-{d} & & \bullet \ar@{.>}[ul]_-{e'}
		}
		\end{equation}
		\While{$d$ is part of a live roof in $\mathscr{N}$}
			\If{$(d,e'')$ is a live roof in $\mathscr{N}$}
				\State Mark $e''$ as dead
				\State If it does not exist, add to $\mathscr{N}$ a live edge $d'$ as shown below:
				\begin{equation}\label{eq:divisor_roof_1}
				\xymatrix{
				& \bullet\\
				\bullet \ar[ur]^-{d}\ar[rr]_-{d'} & & \bullet \ar@{.>}[ul]_-{e''}
				}
				\end{equation}
				\State Remove $d$ from $\mathscr{N}$
				\State $d \gets d'$
			\ElsIf{$(e'',d)$ is a live roof in $\mathscr{N}$}
				\State Mark $e''$ as dead
				\State If it does not exist, add to $\mathscr{N}$ a live edge $d'$ as shown below:
				\begin{equation}\label{eq:divisor_roof_2}
				\xymatrix{
				& \bullet\\
				\bullet \ar@{.>}[ur]^-{e''}\ar[rr]_-{d'} & & \bullet \ar[ul]_-{d}
				}
				\end{equation}
				\State Remove $d$ from $\mathscr{N}$
				\State $d \gets d'$
			\EndIf
		\EndWhile
		\EndWhile\\
	\Return{$\mathscr{N}$}
	\end{algorithmic}
\end{algorithm}

\begin{proposition}\label{prop:buchberger_equals_fallingroof} For any linear graph $\mathscr{S}$:
\begin{itemize}
\item[(i)] Algorithm \ref{alg:elimination_roofs_falling} terminates.
\item[(ii)] At every step $\mathscr{N}$ is a graph and the subgraph of live edges in $\mathscr{N}$ is linear.
\item[(iii)] If the output of of Algorithm \ref{alg:elimination} on input $(G_{\mathscr{S}}, <)$ is $G$ and the output of Algorithm \ref{alg:elimination_roofs_falling} on input $\mathscr{S}$ is $\mathscr{N}$ then $G = G_{\mathscr{N}}$.
\end{itemize}
\end{proposition}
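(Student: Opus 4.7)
The approach is to recognize Algorithm \ref{alg:elimination_roofs_falling} as a graphical reinterpretation of Algorithm \ref{alg:elimination} specialized to the generating sequence $G_{\mathscr{S}}$, and then derive all three claims from an inductive invariant linking the two algorithms' states. The dictionary is: an edge $e \colon X_i \lto X_j$ in the current graph $\mathscr{N}$ corresponds to the polynomial $f_e = X_j - X_i$, whose leading term is $X_j$; a \emph{live} edge corresponds to a current element of $G$ in Buchberger; a roof $(e,e')$ with common tip $X_l$ corresponds to a pair of polynomials with identical leading monomial $X_l$, so $\operatorname{LCM}(\LM(f_e),\LM(f_{e'})) = X_l \neq X_l^2 = \LM(f_e)\LM(f_{e'})$, which is exactly the pairs that Buchberger does not discard by the first test of the \textbf{if}. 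Pairs of edges with distinct tips correspond to pairs with coprime leading monomials, which Buchberger skips. I would check that the edge ordering induces the same lexicographic ordering on pairs that Buchberger uses to select $(i,j) \in B$, so that ``first live roof'' and ``first pair in $B$'' coincide.

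Next I would analyse a single outer-loop iteration. For a roof $(e,e')$ with $e\colon X_i \lto X_l$, $e' \colon X_k \lto X_l$, $X_i < X_k$, one computes
\[
S(f_e, f_{e'}) = (X_l - X_i) - (X_l - X_k) = X_k - X_i,
\]
with leading term $X_k$. Running $\edives{\,\cdot\,}{G}$ from this point does at most one reduction step: we look for a live edge $e'' \colon X_m \lto X_k$; if there is one, we get $X_k - X_i - (X_k - X_m) = X_m - X_i$ and loop; if there is none, the remainder is the current $d$. This is exactly the cascade governed by the inner \textbf{while} of Algorithm \ref{alg:elimination_roofs_falling}: each pass through the inner loop kills one edge with the current tip of $d$, produces a new edge $d'$ with strictly smaller tip (either $X_i \lto X_m$ if $X_i < X_m$, or $X_m \lto X_i$ if $X_m < X_i$, matching diagrams \eqref{eq:divisor_roof_1} and \eqref{eq:divisor_roof_2}), and terminates when no live roof at $d$ survives. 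Multiplying by $1/\operatorname{LC}(S)$ is trivial because every $f_e$ has leading coefficient $1$. I would now prove by induction on the step number the invariant:
\[
\text{(live edges of } \mathscr{N}) \leftrightarrow G, \qquad \text{(live roofs of } \mathscr{N}) \leftrightarrow B,
\]
modulo the pairs already dropped by Buchberger's $\operatorname{Criterion}$ or the disjointness test. Under this invariant (iii) is immediate, since on termination $B = \emptyset$ and the output $G$ consists of exactly the polynomials $f_e$ for $e$ a live edge.

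The invariant also gives (ii): when processing a roof at tip $X_l$, both edges at $X_l$ are killed and the new $d$ has tip strictly below $X_l$; during the inner cascade we likewise remove the previous $d$ whenever a new $d'$ with lower tip is introduced. Hence the live subgraph can never accumulate three edges at a single vertex, and linearity is preserved. Claim (i), termination, then follows from termination of Algorithm \ref{alg:elimination}, which has already been proved; alternatively it is visible directly because the multiset of tips of live edges is well-founded under the extension of $<$ and each inner-loop step strictly lowers the tip of $d$, while each outer-loop step consumes the currently lowest live roof without creating any roof at a lower tip. The main obstacle will be the bookkeeping around Buchberger's $\operatorname{Criterion}(f_i,f_j,B)$: I must check that whenever Buchberger skips a pair via the Criterion, Algorithm \ref{alg:elimination_roofs_falling} has already arranged that the corresponding roof is not live (because one of its edges has been killed), and conversely that no live roof is overlooked. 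This reduces to a careful case analysis of the tip $X_l$ configurations, since for binomials with $\LM(f_k)$ dividing $\operatorname{LCM}(\LM(f_i),\LM(f_j)) = X_l$ we necessarily have $\LM(f_k) = X_l$, so $f_k$ also has tip $X_l$ and the Criterion is the statement that an earlier roof at $X_l$ was processed first.
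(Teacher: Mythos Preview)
Your overall strategy—running Algorithms~\ref{alg:elimination} and~\ref{alg:elimination_roofs_falling} in parallel and maintaining an invariant linking their states—is exactly what the paper does. However, your stated invariant is wrong in a way that breaks part (iii).

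You write that a \emph{live} edge corresponds to a current element of $G$, and conclude that on termination ``the output $G$ consists of exactly the polynomials $f_e$ for $e$ a live edge''. But Algorithm~\ref{alg:elimination} never removes elements from $G$; it only appends. In Algorithm~\ref{alg:elimination_roofs_falling}, edges marked \emph{dead} remain in $\mathscr{N}$—only the intermediate $d$'s in the inner loop are actually removed, and these correspond to intermediate dividends in the early-stopping division, which are never added to $G$ in the first place. Hence the correct correspondence is $G = G_{\mathscr{N}}$ with $G_{\mathscr{N}}$ ranging over \emph{all} edges of $\mathscr{N}$, dead and live alike. This is precisely what (iii) asserts, and your invariant would yield the strictly smaller set $G_{\mathscr{N}^+}$ instead.

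This misidentification has a second cost. Since dead edges remain in $G$, their polynomials are eligible divisors in $\edives{\,\cdot\,}{G}$. To ensure that the division step in Algorithm~\ref{alg:elimination} nonetheless tracks the inner \textbf{while} loop of Algorithm~\ref{alg:elimination_roofs_falling} (which only consults \emph{live} roofs), the paper carries an extra clause in its invariant which you omit: no dead edge in $\mathscr{N}$ ends where a live edge begins. Combined with linearity of the live subgraph, this guarantees that whenever the current $d$ ends at a vertex $X_k$, there is at most one polynomial in $G$ with leading term $X_k$, and it comes from a live edge. Without this clause, your inner-loop analysis (``we look for a live edge $e''\colon X_m \to X_k$'') is not justified, because a dead edge ending at $X_k$ could be the divisor chosen. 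The remaining pieces of your plan—the computation of $S$-polynomials, the ``tip strictly decreases'' argument for the inner loop, and the observation that $\operatorname{Criterion}$ amounts to a third edge at the same tip and is thus vacuous by linearity—are fine and match the paper.
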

\begin{proof}
Given an edge $e: X_i \lto X_j$ we write $f_e$ for $X_j - X_i$. We prove (iii) and along the way (i), (ii) by stepping through Algorithm \ref{alg:elimination} on input $G_{\mathscr{S}}$ (with intermediate variables $G,B,S$) and Algorithm \ref{alg:elimination_roofs_falling} on input $\mathscr{S}$ (with intermediate variable $\mathscr{N}$). At each step let $B_{rel} \subseteq B$ denote the set of pairs $(e,e')$ which are \emph{relevant} in the sense that
\[
\operatorname{LCM}(\LM(f_e), \LM(f_{e'})) \neq \LM(f_e) \LM(f_{e'}) \textbf{ and } \operatorname{Criterion}(f_e, f_{e'}, B) \textbf{ is false}\,.
\]
We will prove by induction on the number of steps through Algorithm \ref{alg:elimination} that at each step the following proposition $\textbf{IP}$ holds:
\begin{itemize}
\item[\textbf{IP}:]$\mathscr{N}$ is a graph, the live edges in $\mathscr{N}$ form a linear graph, no dead edge in $\mathscr{N}$ ends where a live edge begins, $G = G_{\mathscr{N}}$, and $B_{rel}$ is the set of live roofs in $\mathscr{N}$.
\end{itemize}

It is clear that these conditions are all initially true, except perhaps the last claim about $B_{rel}$. To examine this condition at the beginning of the algorithm, let $e: X_i \lto X_j$ and $e': X_k \lto X_l$ be edges in $\mathscr{N} = \mathscr{S}$. Then $\LT(f_{e}) = X_j, \LT(f_{e'}) = X_l$ and
\[
\operatorname{LCM}(\LM(f_e), \LM(f_{e'})) = \operatorname{LCM}(X_j, X_l) = \begin{cases} X_j X_l & X_j \neq X_l \\ X_j & X_j = X_l \end{cases}
\]
so $\operatorname{LCM}(\LM(f_e), \LM(f_{e'})) \neq \LM(f_e) \LM(f_{e'})$ if and only if $X_j = X_l$, that is, if and only if $e, e'$ have a common endpoint in $\mathscr{N}$. Suppose this is the case. Then $\operatorname{Criterion}(f_e, f_{e'}, B)$ is true if there exists some edge $e'' \notin \{ e, e'\}$ for which $[e,e''], [e',e'']$ are \emph{not} in $B$ and the edges $e, e', e''$ all have the same endpoint. But since $\mathscr{N}$ is linear this is impossible, so the criterion is false. Hence a pair $(e,e')$ is in $B_{rel}$ if and only if it is a roof in $\mathscr{S}$, and since initially all roofs are live this proves the claim.
\\

Suppose that \textbf{IP} is true up to some step of the outer \textbf{while} loops of the two algorithms. Let $\mathscr{N}_0$ denote the state of $\mathscr{N}$ at the beginning of this step of Algorithm \ref{alg:elimination_roofs_falling}. Algorithm \ref{alg:elimination} looks in order through pairs $(e,e')$ for one that is relevant, or what is the same by the inductive hypothesis, for a live roof in $\mathscr{N}$. Let $(e,e')$ be a live roof with $e: X_i \lto X_j, e': X_k \lto X_j$. Algorithm \ref{alg:elimination} computes the $S$-polynomial
\begin{equation}
S := S(f_e, f_{e'}) = f_e - f_{e'} = X_k - X_i
\end{equation}
which is $f_d$ for $d: X_i \lto X_k$ in Algorithm \ref{alg:elimination_roofs_falling}. The next step of Algorithm \ref{alg:elimination} is to compute $\edives{S}{G}$. By \textbf{IP} we have $G = G_{\mathscr{N}}$. A polynomial in $G$ with leading term $X_k$ is an edge in $\mathscr{N}$ ending at $X_k$, which must be live by \textbf{IP} as $e'$ was live. If there is no polynomial in $G$ with leading term $X_k$ then the division algorithm (with early stopping) returns $S$, $\frac{1}{\operatorname{LC}(S)} S$ is added to $G$, $d$ is added to $\mathscr{N}$, $e,e'$ are marked as dead and the step is complete. It is straightforward to check that \textbf{IP} still holds.

Otherwise there is a polynomial $f$ in $G$ with leading term $X_k$. If $f = S$ then $\edives{S}{G} = 0$ so nothing is added to $G$ by Algorithm \ref{alg:elimination}, nothing is added to $\mathscr{N}$ by Algorithm \ref{alg:elimination_roofs_falling} and the step is complete; again \textbf{IP} still holds. We are left with the case where $f \neq S$ exists in $G$ with leading term $X_k$. For this we analyse the inner \textbf{while} loop in Algorithm \ref{alg:elimination_roofs_falling}. The key observations are that at the beginning of each step of this inner loop:
\begin{itemize}
\item[(a)] $d$ is part of at most one live roof in $\mathscr{N}$.
\item[(b)] there is no dead edge in $\mathscr{N}$ ending where $d$ ends.
\end{itemize}
Both claims are true for the initial $d$ by \textbf{IP}. We call $d$ Type $1$ if it comes from $d'$ in \eqref{eq:divisor_roof_1} and Type $2$ if it comes from $d'$ in \eqref{eq:divisor_roof_2}. If $d$ is Type $1$ both claims (a),(b) follow from \textbf{IP}. In Type $2$, we have to analyse the previous steps; if the previous step was Type $2$ (or there was no previous step) then there is a live edge in $\mathscr{N}_0$ beginning where $d$ ends and from this we deduce (a),(b). Otherwise there are some Type $1$ steps intervening between the current step and the most recent Type $2$ step (or the original roof \eqref{eq:divisor_roof_0} if there are no previous Type $2$ steps). However, in Type $1$ steps the $d,d'$ edges start from the same vertex, so the most recent Type $2$ step (or original roof) provides a live edge beginning where $d$ ends, from which we deduce (a),(b) again from \textbf{IP}.
\\

We now prove that the inner \textbf{while} loop of Algorithm \ref{alg:elimination_roofs_falling} aligns with the division step of Algorithm \ref{alg:elimination}. By (a) there is a unique polynomial $f = f_{e''}$ in $G$ with the same leading term as $f_d$. The division algorithm replaces $S = f_d$ by the dividend
\begin{equation}\label{eq:dividend}
S \leftarrow S - (\LT(S)/\LT(f_{e''})) f_{e''} = f_d - f_{e''} = \pm f_{d'}
\end{equation}
There are two cases, corresponding to the two \textbf{if} statements in Algorithm \ref{alg:elimination_roofs_falling}. In the first case $f_{d'} = S$ and in the second $f_{d'} = -S$. 

If $f_{d'} \in G$ then by linearity of $\mathscr{N}_0$ and (b) there is no other polynomial in $G$ with the same leading term as $f_{d'}$ and so the division algorithm returns zero. Likewise, the inner \textbf{while} loop of Algorithm \ref{alg:elimination_roofs_falling} terminates because there is no live roof involving $d'$. To prove \textbf{IP} note that the only changes to $\mathscr{N}$ during the inner \textbf{while} loop are to mark some edges as dead. Marking the $e''$ as dead does not change the set of live roofs by (a).

If $f_{d'} \notin G$ then by (b) the leading term of $f_{d'}$ is the same as the leading term of some element of $G$ (that is, division continues) if and only if $d'$ is part of a live roof in $\mathscr{N}$ (that is, the inner \textbf{while} loop continues). If division and the \textbf{while} loop both stop, then $f_{d'} = \frac{1}{\operatorname{LC}S} S$ is added to $G$ and $d'$ is added to $\mathscr{N}$. As before one can check that \textbf{IP} is still true. Otherwise the division and \textbf{while} loop continue, and we repeat the above analysis. This completes the proof of the inductive step, showing that \textbf{IP} holds in the next iteration of the outer \textbf{while} loop and so (iii) follows by induction.

(i) Algorithm \ref{alg:elimination} terminates when $B$ is empty, at which step $B_{rel}$ must be empty and so there are no live roofs in $\mathscr{N}$ so Algorithm \ref{alg:elimination_roofs_falling} also terminates.
\end{proof}

Figure \ref{figure:falling_roofs} shows the Falling Roofs algorithm applied to the graph of Example \ref{example_allowedgraph_test}.

\begin{cor}\label{cor:falling_roofs_computes_grobner} If $\mathscr{S}$ is a linear graph and $\mathscr{N}$ is the output of Algorithm \ref{alg:elimination_roofs_falling} on $\mathscr{S}$ then $G_{\mathscr{N}}$ is a Gröbner basis for the ideal generated by $G_{\mathscr{S}}$.
\end{cor}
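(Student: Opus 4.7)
The plan is to deduce this corollary directly by chaining together two results already established in the paper: the correctness of Algorithm \ref{alg:elimination} (Buchberger with Early Stopping), and the equivalence between that algorithm and Algorithm \ref{alg:elimination_roofs_falling} (Falling Roofs) established in Proposition \ref{prop:buchberger_equals_fallingroof}(iii). There should be essentially no new work to do.

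First I would apply Algorithm \ref{alg:elimination} to the input $(G_{\mathscr{S}}, <)$. By the proposition asserting that Algorithm \ref{alg:elimination} terminates and returns a Gröbner basis, the output $G$ is a Gröbner basis for $\langle G_{\mathscr{S}} \rangle$ with respect to the given lexicographic order. Next, by Proposition \ref{prop:buchberger_equals_fallingroof}(iii), this output satisfies $G = G_{\mathscr{N}}$, where $\mathscr{N}$ is the result of running Algorithm \ref{alg:elimination_roofs_falling} on $\mathscr{S}$. Combining these two facts gives that $G_{\mathscr{N}}$ is a Gröbner basis for $\langle G_{\mathscr{S}} \rangle$, as required.

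There is no real obstacle here: the corollary is a one-line consequence of Proposition \ref{prop:buchberger_equals_fallingroof}. The only thing worth being explicit about is that, since the map $e \mapsto f_e = X_j - X_i$ (for $e : X_i \to X_j$) is a bijection between edges of $\mathscr{N}$ and elements of $G_{\mathscr{N}}$, the equality $G = G_{\mathscr{N}}$ is literally an equality of generating sets, so ``Gröbner basis for $\langle G \rangle$'' and ``Gröbner basis for $\langle G_{\mathscr{N}} \rangle = \langle G_{\mathscr{S}} \rangle$'' are the same statement. Hence the proof reduces to a single sentence citing the two prior results.
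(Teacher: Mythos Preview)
Your proposal is correct and matches the paper's approach: the corollary is stated without proof because it follows immediately from Proposition \ref{prop:buchberger_equals_fallingroof}(iii) together with the proposition that Algorithm \ref{alg:elimination} returns a Gr\"obner basis, exactly as you describe.
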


Examining the algorithm it is easy to verify the following:

\begin{lemma}\label{lemma:valence_one_pres} Let $\mathscr{S}$ be a linear graph and $\mathscr{N}$ the output of Algorithm \ref{alg:elimination_roofs_falling} on $\mathscr{S}$. If a vertex has valence one in $\mathscr{S}$ then it is incident with precisely one live edge in $\mathscr{N}$.
\end{lemma}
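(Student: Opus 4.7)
I would prove the stronger invariant, maintained throughout Algorithm \ref{alg:elimination_roofs_falling}, that every vertex $v$ of valence one in $\mathscr{S}$ is incident with exactly one live edge in $\mathscr{N}$. The base case is immediate: initially $\mathscr{N} = \mathscr{S}$ and every edge is live, so the unique edge incident with $v$ in $\mathscr{S}$ is the single live edge at $v$.

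For the inductive step I would analyse how one roof-processing step, whether in the outer \textbf{while} loop or an inner \textbf{while} loop iteration of Algorithm \ref{alg:elimination_roofs_falling}, changes the number of live edges incident with $v$. The central observation is that $v$ cannot be the tip of any live roof: being a tip requires two distinct live edges ending there, which would contradict the inductive hypothesis. Hence every processed live roof has tip $T \neq v$, and the only vertices at which live edges change are $T$ and the two sources $s_1, s_2$ of the roof. If $v \notin \{s_1, s_2\}$ then the live valence at $v$ is unchanged, so assume $v \in \{s_1, s_2\}$: exactly one live edge incident to $v$ then dies (the roof edge with source $v$ in the outer step, or in an inner step the analogous $e''$, or the just-removed $d$), while a new live edge (the $d$ of \eqref{eq:divisor_roof_0}, or the $d'$ of \eqref{eq:divisor_roof_1} or \eqref{eq:divisor_roof_2}) with $v$ as one endpoint is simultaneously introduced; so the live valence at $v$ remains one.

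The main obstacle is the ``if it does not exist'' clause guarding the creation of each new edge: if the new edge were already in $\mathscr{N}$, the compensating $+1$ contribution would not occur and the argument breaks. I would handle this by ruling out both possibilities under the inductive hypothesis. If the new edge were already present as a live edge, then $v$ would already have two distinct live edges incident to it before the step (the dying roof edge together with the pre-existing would-be new edge), contradicting the inductive hypothesis. If it were present as a dead edge, then its endpoint at the other source $s_j$ would be a vertex where a live roof edge begins (since $s_j$ is the source of a live edge of the roof being processed), contradicting the invariant \textbf{IP} established in the proof of Proposition \ref{prop:buchberger_equals_fallingroof} that no dead edge in $\mathscr{N}$ ends where a live edge begins. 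Therefore the new edge really is added fresh and live, the live valence at $v$ is preserved at one through every step of the algorithm, and evaluating at the terminal state yields the lemma.
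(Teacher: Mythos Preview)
Your proof is correct and far more detailed than the paper's, which simply asserts the lemma with ``Examining the algorithm it is easy to verify the following.'' Your inductive invariant and case analysis are the right way to make that verification explicit.

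One small point of care: you invoke the invariant \textbf{IP} from Proposition \ref{prop:buchberger_equals_fallingroof} to rule out the ``already present as a dead edge'' case, but in that proof \textbf{IP} is only stated at the boundaries of the \emph{outer} while loop, whereas you need it at each inner-loop step as well. This is not a real obstacle: the clause ``no dead edge ends where a live edge begins'' is in fact preserved through every inner-loop modification, since the only edges newly marked dead are the roof edges $e,e',e''$, each of which ends at a tip where (by linearity of the live subgraph) both incident live edges end and none begins, while removing a $d$ and adding a live $d'$ cannot create such a violation. Once you note this, your appeal to \textbf{IP} at inner steps is justified and the argument goes through exactly as written.
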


\begin{figure}
		\begin{center}
			\begin{tabular}{ c c }
			\begin{tikzcd}[row sep = small, column sep = small]
			& & & X_4 \\
			& X_2 \\
			& & X_3\\
			& & & & & X_6 \\
			X_1\\
			& & & & X_5\\
			\arrow[from=5-1, to=2-2]
			\arrow[from=3-3, to=2-2]
			\arrow[from=3-3, to=1-4]
			\arrow[from=6-5, to=1-4]
			\arrow[from=6-5, to=4-6]
		\end{tikzcd}
		&
		\begin{tikzcd}[row sep = small, column sep = small]
			& & & X_4 \\
			& X_2 \\
			& & X_3\\
			& & & & & X_6 \\
			X_1\\
			& & & & X_5\\
			\arrow[dotted, from=5-1, to=2-2]
			\arrow[dotted, from=3-3, to=2-2]
			\arrow[from=5-1, to=3-3]
			\arrow[from=3-3, to=1-4]
			\arrow[from=6-5, to=1-4]
			\arrow[from=6-5, to=4-6]
		\end{tikzcd}
		\\
		\begin{tikzcd}[row sep = small, column sep = small]
			& & & X_4 \\
			& X_2 \\
			& & X_3\\
			& & & & & X_6 \\
			X_1\\
			& & & & X_5\\
			\arrow[dotted, from=5-1, to=2-2]
			\arrow[dotted, from=3-3, to=2-2]
			\arrow[from=5-1, to=3-3]
			\arrow[dotted, from=3-3, to=1-4]
			\arrow[dotted, from=6-5, to=1-4]
			\arrow[from=6-5,to=3-3]
			\arrow[from=6-5, to=4-6]
		\end{tikzcd}
		&
		\begin{tikzcd}[row sep = small, column sep = small]
			& & & X_4 \\
			& X_2 \\
			& & X_3\\
			& & & & & X_6 \\
			X_1\\
			& & & & X_5\\
			\arrow[dotted, from=5-1, to=2-2]
			\arrow[dotted, from=3-3, to=2-2]
			\arrow[dotted, from=5-1, to=3-3]
			\arrow[dotted, from=3-3, to=1-4]
			\arrow[dotted, from=6-5, to=1-4]
			\arrow[from=6-5,to=5-1]
			\arrow[from=6-5, to=4-6]
		\end{tikzcd}
			\end{tabular}
		\end{center}
\caption{Algorithm \ref{alg:elimination_roofs_falling} applied to the graph of Example \ref{example_allowedgraph_test}, reading from left to right and top to bottom.}
\label{figure:falling_roofs}
\end{figure}
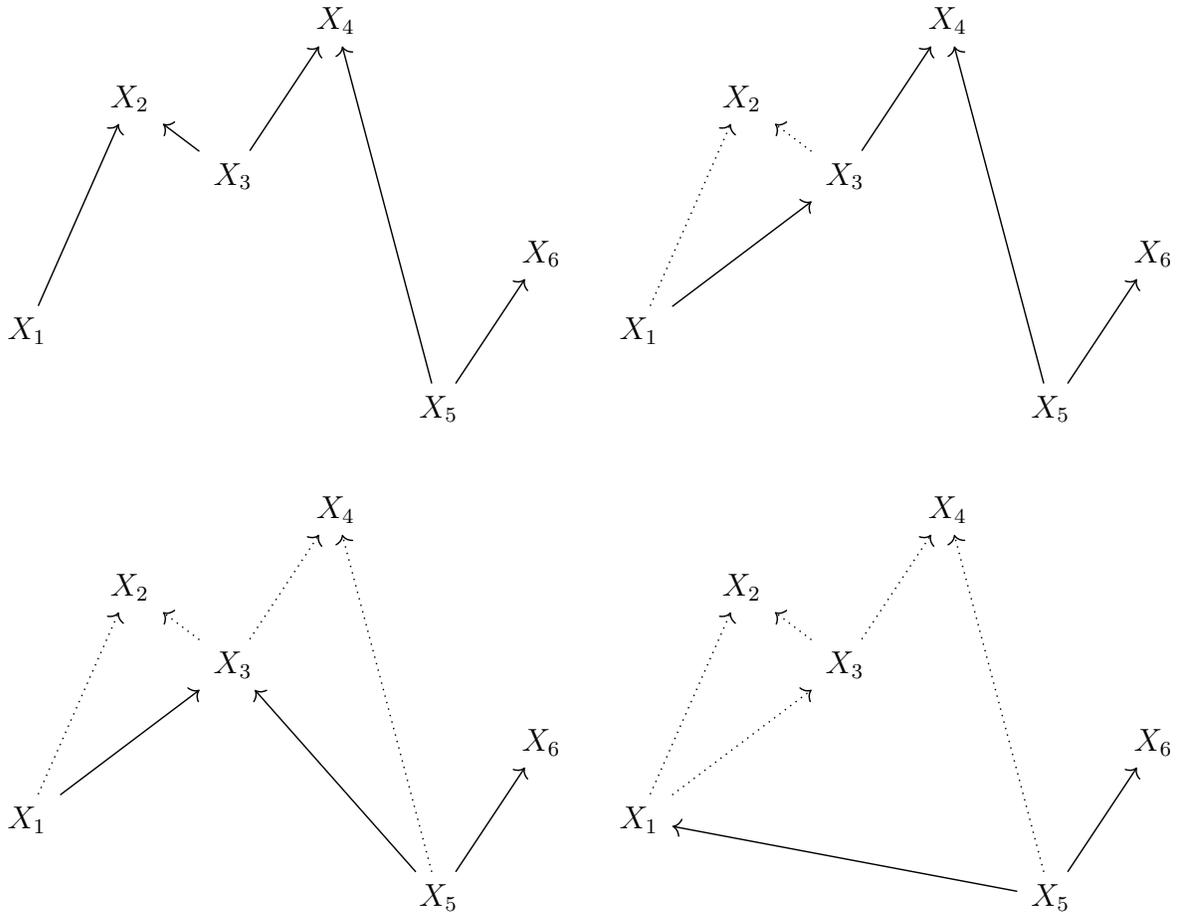

\subsection{Falling Roofs and Elimination}\label{section:falling_roofs_elimination}

We consider the Falling Roofs algorithm (Algorithm \ref{alg:elimination_roofs_falling}) in the context of Elimination Theory. Let $\bold{Z} = \{Z_1,\ldots,Z_r\}$ be a set of variables and $<$ a total order on $\bold{Z}$. We denote the associated lexicographic monomial order also  by $<$. We assume given a partition $\bold{Z} = \bold{X} \cup \bold{Y}$ such that if $Z_i \in \bold{X}$ and $Z_j \in \bold{Y}$ then $Z_i < Z_j$. That is, the $\bold{X}$-variables are ``low'' and the $\bold{Y}$-variables are ``high''. Let $\mathscr{S}$ be a linear $<$-graph with associated ideal
\[
I = \langle G_{\mathscr{S}}\rangle \subseteq k[\bold{X},\bold{Y}]\,.
\]
Let $\mathscr{N}$ be the output of the Falling Roofs algorithm on $\mathscr{S}$. Then by Corollary \ref{cor:falling_roofs_computes_grobner} $G_{\mathscr{N}}$ is a Gröbner basis for $I$ and hence by the Elimination Theorem (Theorem \ref{thm:elimination})
\[
G_{\mathscr{N}} \cap k[\bold{X}] \text{ is a Gröbner basis for } I \cap k[\bold{X}]\,.
\]
If we write $X_1,\ldots,X_n$ for the variables in $\bold{X}$ and similarly $Y_1,\ldots,Y_m$ for the variables in $\bold{Y}$ then $G_{\mathscr{N}} \cap k[\bold{X}]$ is simply all the polynomials $f_e = X_j - X_i$ where $e: X_i \lto X_j$ is an edge in $\mathscr{N}$ between $\bold{X}$-variables, that is, we delete any edges involving $\bold{Y}$-variables. This has a natural interpretation as ``cutting off the top'' of the realisation: if we draw a line through the realisation at the height of the highest $\bold{X}$-variable, intersecting with $k[\bold{X}]$ means deleting any vertices above the line and all edges incident with such vertices.

\begin{example}\label{example:cut_the_realisation} In Example \ref{example_allowedgraph_test} take $\bold{X} = \{ X_1, X_5, X_6 \}$ and $\bold{Y} = \{ X_2, X_3, X_4 \}$. Then the set $G_{\mathscr{N}} \cap k[\bold{X}]$ is a Gröbner basis for $I \cap k[\bold{X}]$ where $G_{\mathscr{N}} \cap k[\bold{X}] = \{ X_6 - X_5, X_1 - X_5 \}$ is determined by the subgraph in the box below:
\[
\tikz[%remember picture, 
overlay]{
    \filldraw[fill=white,draw=blue!50!yellow] (-0.2,-2.8) rectangle (7.8,0.3);
}
\begin{tikzcd}[row sep = small, column sep = small]
			& & & X_4 \\
			& X_2 \\
			& & X_3\\
			& & & & & X_6 \\
			X_1\\
			& & & & X_5\\
			\arrow[dotted, from=5-1, to=2-2]
			\arrow[dotted, from=3-3, to=2-2]
			\arrow[dotted, from=5-1, to=3-3]
			\arrow[dotted, from=3-3, to=1-4]
			\arrow[dotted, from=6-5, to=1-4]
			\arrow[from=6-5,to=5-1]
			\arrow[from=6-5, to=4-6]
		\end{tikzcd}
\]
Note that we follow the notational convention introduced just prior to Proposition \ref{prop:buchberger_equals_fallingroof}, of denoting some edges of $\mathscr{N}$ with solid lines (the \emph{live} edges) and some with dotted lines (the \emph{dead}) edges. Shown is the state of these labels at the end of the algorithm. These are all edges of $\mathscr{N}$ and so for example $X_3 - X_1 \in G_{\mathscr{N}}$.
\end{example}

We assume $X_1,\ldots,X_n$ is the order these variables appear in the list $Z_1,\ldots,Z_r$, that is, the order the $\bold{X}$-variables are enumerated is their order of appearance from left to right in the realisation (which is \emph{not} necessarily the $<$ order). %The Falling Roofs algorithm computes not just $\mathscr{N}$ but also a labelling of edges as dead or live, as recalled in Example \ref{example:cut_the_realisation}. %It turns out that, under some conditions, ``cutting off the top'' of the graph just means taking the live edges:

\begin{proposition}\label{prop:cutting_is_living} Suppose that $\mathscr{S}$ is a subgraph of the realisation $\mathscr{R}_{<}$ satisfying the following conditions
\begin{itemize}
\item[(i)] Every vertex in $\mathscr{S}$ of valence one is labelled with an element from $\bold{X}$.
\item[(ii)] We have $X_1 < X_2 < \cdots < X_n$.
\end{itemize}
Let $\mathscr{N}^+$ denote the subgraph of live edges in $\mathscr{N}$. Then $\mathscr{N}^+$ is a linear graph, every vertex of nonzero valence in $\mathscr{N}^+$ is labelled with an element of $\bold{X}$, and $G_{\mathscr{N}^+} = G_{\mathscr{N}} \cap k[\bold{X}]$ is a Gröbner basis for $I \cap k[\bold{X}]$.
\end{proposition}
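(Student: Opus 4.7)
The plan is to maintain three invariants through Algorithm \ref{alg:elimination_roofs_falling} by induction on the number of algorithm steps. Linearity of $\mathscr{N}^+$ is immediate from Proposition \ref{prop:buchberger_equals_fallingroof}(ii). Once we establish that at termination every vertex of nonzero valence in $\mathscr{N}^+$ lies in $\bold{X}$ and every dead edge in $\mathscr{N}$ has at least one $\bold{Y}$-endpoint, the two inclusions giving $G_{\mathscr{N}^+} = G_{\mathscr{N}} \cap k[\bold{X}]$ follow directly, and the Gr\"obner basis claim follows from the Elimination Theorem (Theorem \ref{thm:elimination}) applied to the Gr\"obner basis $G_{\mathscr{N}}$ from Corollary \ref{cor:falling_roofs_computes_grobner}.

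At each step $t$ of the algorithm let $V^{(t)}$ denote the union of $\bold{X}$ with the set of vertices of positive live valence in $\mathscr{N}$, and let $\mathscr{R}^{(t)}$ be the linear graph on $V^{(t)}$ whose edges connect $x$-consecutive vertices of $V^{(t)}$ oriented from lower to higher in $<$. The invariants are
\begin{itemize}
\item[(a)] $\mathscr{N}^{+}$ is a subgraph of $\mathscr{R}^{(t)}$;
\item[(b)] every $\bold{Y}$-vertex in $V^{(t)}$ has live valence exactly $2$;
\item[(c)] every dead edge in $\mathscr{N}$ has at least one endpoint in $\bold{Y}$.
\end{itemize}
The base case is immediate from conditions (i), (ii) and the inclusion $\mathscr{S} \subseteq \mathscr{R}_<$.

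The key observation for the inductive step is that by condition (ii), together with $\bold{X}$-vertices being lower than $\bold{Y}$-vertices in $<$, every $\bold{X}$-vertex $X_i \in V^{(t)}$ has at most one incoming edge in $\mathscr{R}^{(t)}$: its right $x$-neighbor in $V^{(t)}$ is always higher in $<$ (so receives an outgoing edge from $X_i$), and its left $x$-neighbor contributes an incoming edge only when that neighbor is itself in $\bold{X}$, in which case it equals $X_{i-1}$. Consequently no $\bold{X}$-vertex is ever the tip of a live roof, so every tip $V$ processed by the algorithm lies in $\bold{Y}$. The outer loop then marks the two edges at $V$ dead (preserving (c), as both have $V$ as endpoint) and adds the new edge $d : A \to B$ between the two sources, which sit on opposite sides of $V$ in $x$-order; this makes $\mathscr{R}^{(t+1)}$ equal to $\mathscr{R}^{(t)}$ with $V$ contracted out, so (a) is preserved, while (b) remains valid since $V$ drops out of $V^{(t+1)}$ and the live valences at $A, B$ are unchanged. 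Each pass of the inner loop, via \eqref{eq:divisor_roof_1} or \eqref{eq:divisor_roof_2}, repeats this single-vertex contraction, the same argument forcing the new tip to lie in $\bold{Y}$.

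At termination $\mathscr{N}$ has no live roof. If some $\bold{Y}$-vertex still had positive live valence, let $Y^*$ be a $<$-maximal such vertex; by (b), $Y^*$ has live valence $2$. Any outgoing live edge at $Y^*$ would target a higher vertex, necessarily in $\bold{Y}$ and with positive live valence (from the incoming edge), contradicting the maximality of $Y^*$. Hence both live edges at $Y^*$ are incoming, making $Y^*$ the tip of a live roof, a contradiction. Therefore every live edge has both endpoints in $\bold{X}$, so $G_{\mathscr{N}^+} \subseteq G_{\mathscr{N}} \cap k[\bold{X}]$; conversely by (c) any edge of $\mathscr{N}$ with both endpoints in $\bold{X}$ must be live, giving the reverse inclusion, and the Elimination Theorem completes the proof. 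The main obstacle is the bookkeeping in the inner loop: one has to verify that the edge $d'$ added at each inner iteration is genuinely new to $\mathscr{N}$, which follows because its endpoints sit on opposite sides of the newly eliminated tip and hence were never previously $x$-consecutive in any $V^{(s)}$.
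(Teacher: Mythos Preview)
Your proof is correct and takes a genuinely different route from the paper's. The paper argues at termination: it shows no $\bold{Y}$-vertex can have positive live valence by tracing back to the connected component in $\mathscr{S}$ (whose endpoints are in $\bold{X}$ by (i)), invoking Lemma~\ref{lemma:valence_one_pres} and a connectivity claim to see that the same component persists in $\mathscr{N}^+$, and then applying a discrete Rolle's theorem (low--high--low forces a local maximum, i.e.\ a live roof). For the equality $G_{\mathscr{N}^+} = G_{\mathscr{N}} \cap k[\bold{X}]$ the paper argues that if an edge $X_i \to X_j$ between $\bold{X}$-vertices were dead then $X_j$ was once a tip with two incoming live edges from $\bold{X}$, and dismisses this as ``impossible by (ii)'' without further detail.

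Your approach instead carries a running structural invariant, namely that the live subgraph always sits inside a \emph{contracted realisation} $\mathscr{R}^{(t)}$ obtained by deleting the $\bold{Y}$-vertices already processed as tips. This is the key idea the paper's terse final step implicitly needs: once you know live edges only join $x$-consecutive vertices of $V^{(t)}$, condition (ii) immediately forces any $\bold{X}$-vertex to have at most one incoming live edge, so tips are always in $\bold{Y}$, and (c) follows for free. Your endgame argument (take a $<$-maximal $\bold{Y}$-vertex of positive live valence) is a local version of the paper's Rolle argument and avoids the connectivity bookkeeping. In short, the paper's proof is more geometric in flavour for the first claim but leaves the second claim under-justified; your invariant (a) is exactly what closes that gap and gives a uniform treatment of both claims.
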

\begin{proof}
By Proposition \ref{prop:buchberger_equals_fallingroof}(ii) the graph $\mathscr{N}^+$ is linear and by construction it contains no roofs. Suppose there were a vertex with nonzero valence in $\mathscr{N}^+$ labelled by $Y \in \bold{Y}$. By examination of the Falling Roofs algorithmn it is easy to see that $Y$ must have had nonzero valence in $\mathscr{S}$. It belongs to a connected component of $\mathscr{S}$ of the form
\begin{equation}\label{eq:segment_Y}
\xymatrix{
Z_{i_1} \ar@{-}[r] & \cdots \ar@{-}[r] & Y \ar@{-}[r] & \cdots \ar@{-}[r] & Z_{i_l}
}
\end{equation}
where $Z_{i_1}, Z_{i_l} \in \bold{X}$ by (i). By Lemma \ref{lemma:valence_one_pres} the vertices $Z_{i_1}, Z_{i_l}$ have valence one in $\mathscr{N}^+$ and again by examining the Algorithm we can see that $Y$ is still connected to these vertices in $\mathscr{N}^+$ (one proves by induction that if two vertices connected by an unoriented path in $\mathscr{S}$ have nonzero valence in $\mathscr{N}^+$ then they are still connected by an unoriented path in $\mathscr{N}^+$). That is, we have also in $\mathscr{N}^+$ a connected component that looks like \eqref{eq:segment_Y}.

Now we invoke a discrete version of Rolle's theorem: since \eqref{eq:segment_Y} starts low (at $Z_{i_1}$) ends low (at $Z_{i_l}$) and is high somewhere in between (at $Y$) it must have a local maxima (that is, a roof) as is easily proven by induction. But this contradicts the non-existence of live roofs in $\mathscr{N}$. Hence all vertices with nonzero valence in $\mathscr{N}^+$ are labelled by $\bold{X}$. 

For the last statement, since $G_{\mathscr{N}} \cap k[\bold{X}]$ is a Gröbner basis for $I \cap k[\bold{X}]$ it suffices for the last claim to prove that 
\begin{equation}\label{eq:equal_as_sets_gs}
G_{\mathscr{N}^+} = G_{\mathscr{N}} \cap k[\bold{X}]\,.
\end{equation}
Suppose $f \in G_{\mathscr{N}} \cap k[\bold{X}]$ is $f = f_e$ for an edge $e: X_i \lto X_j$ which is not live. In the Falling Roofs algorithm the only way $e$ can be marked as dead is if at some point during the algorithm there are two live edges with target $X_j$. Since by hypothesis every $\bold{Y}$ variable is greater in the $<$-order than every $\bold{X}$ variable, the source of both of these edges with target $X_j$ must be in $\bold{X}$. But this is impossible by (ii). This contradiction proves \eqref{eq:equal_as_sets_gs}.
\end{proof}

\section{Monomial Orders}\label{sec:monomial_orders_before_cut}

Let $\pi$ be a proof net with single conclusion $A$. We learned in Proposition \ref{prop:permutation} that the variables of $P_\pi$ may be organised into sequences called persistent paths. It is natural to use these sequences to define a monomial order on $P_\pi$. Let
\begin{equation}\label{eq:persistent_paths0}
\mathscr{P}_1,\ldots,\mathscr{P}_m
\end{equation}
be the persistent paths of $\pi$ ordered so that if $U_i$ is the last unoriented atom in $\mathscr{P}_i$ then $U_1, \ldots, U_m$ is the order that these atoms appear in $A$. Let us name the variables $X_i$ so that \eqref{eq:persistent_paths0} is $X_1, \ldots, X_n$ and $P_\pi = k[X_1,\ldots,X_n]$.

\begin{defn}\label{defn:monomial_order0} We write $U <_0 V$ if $U$ is before $V$ in \eqref{eq:persistent_paths0}. The monomial order $<_0$ on $P_\pi$ is the lexicographic order determined by $<_0$ on the variables.
\end{defn}

In this section the monomial order on $P_\pi$ is $<_0$.

%To a proof net $\pi$ with single conclusion $A$ we have associated an ideal $I_\pi \subseteq P_\pi$. To apply the Buchberger algorithm to $I_\pi$ we need to choose a monomial order. We define such an order $<_0$ and then use it to refine $G_\pi$ (Definition \ref{defn:ideal_proof}) to a Gröbner basis $G^{(0)}_\pi$ for $I_\pi$.

\begin{lemma}\label{lemma:LMdiff} If $f,g$ are distinct elements of $G_\pi$ then $\LM(f) \neq \LM(g)$, where leading monomials are defined with respect to $<_0$.
\end{lemma}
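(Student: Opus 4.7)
The plan is to prove the stronger statement that the map $\LM : G_\pi \to U_\pi$ sending each generator to its leading monomial under $<_0$ is injective. By Definitions \ref{ideal_axiomcut} and \ref{ideal_axiompartensor}, every element of $G_\pi$ is a polynomial of the form $\pm(U - V)$ where $U \sim V$ (Definition \ref{defn:simtilde}) are partnered at a single link of $\pi$, and conversely each $\sim$-pair $\{U,V\}$ contributes exactly one such polynomial to $G_\pi$. Its leading monomial is whichever of $U, V$ is larger in $<_0$, independent of the sign.

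The key structural input I would use is Proposition \ref{prop:permutation}(iii), which identifies each $\approx$-equivalence class on $U_\pi$ with the underlying set of a persistent path $\mathscr{P}_i = (Z_1^{(i)}, \ldots, Z_{r_i}^{(i)})$ whose consecutive entries are $\sim$-related. I would then refine this to a bijection between $G_\pi$ and the disjoint union of consecutive pairs across the persistent paths. The refinement rests on the observation that every atom $W \in U_\pi$ has at most two $\sim$-neighbours, one for each link incident with the edge carrying $W$, since by Definitions \ref{ideal_axiomcut} and \ref{ideal_axiompartensor} the generating set at a link pairs each atom on one edge with exactly one atom on another edge at that link. Consequently the $\sim$-graph restricted to $\mathscr{P}_i$ has maximum valence two, is connected, and is not a cycle (its endpoints lie on a conclusion edge, incident with only one link), so it is a path graph; the $\sim$-pairs inside $\mathscr{P}_i$ are therefore exactly the consecutive pairs $\{Z_k^{(i)}, Z_{k+1}^{(i)}\}$.

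By Definition \ref{defn:monomial_order0}, $<_0$ is the lexicographic order induced by the concatenation of the persistent paths in a specified order, so in particular $Z_1^{(i)} <_0 Z_2^{(i)} <_0 \cdots <_0 Z_{r_i}^{(i)}$ inside each path. Hence the generator associated with $\{Z_k^{(i)}, Z_{k+1}^{(i)}\}$ has leading monomial $Z_{k+1}^{(i)}$. Ranging over all $k, i$, the resulting leading monomials form the set $\{Z_k^{(i)} : 2 \le k \le r_i, \ 1 \le i \le m\}$, which is pairwise distinct because persistent paths are disjoint as subsets of $U_\pi$. This gives injectivity of $\LM$ on $G_\pi$, which is the desired conclusion.

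The only non-routine step is the bijection between $G_\pi$ and consecutive pairs in persistent paths; once the bounded-valence observation is in hand this is essentially a repackaging of Proposition \ref{prop:permutation}(iii), and I do not anticipate any real obstacle beyond this small piece of bookkeeping.
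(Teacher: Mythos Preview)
Your proof is correct. The paper states this lemma without proof, treating it as immediate from the setup of persistent paths and the definition of $<_0$. Your argument spells out precisely the implicit reasoning: once one knows (via Proposition \ref{prop:permutation}(iii)) that each $\approx$-class is the underlying set of a sequence $\mathscr{P}_i$ with consecutive $\sim$-relations, and that $<_0$ orders each $\mathscr{P}_i$ monotonically, the distinctness of leading monomials follows. The bounded-valence observation you isolate --- that each atom lies on a single edge and hence participates in at most two generators of $G_\pi$, one per incident link --- is exactly what upgrades Proposition \ref{prop:permutation}(iii) from ``the consecutive pairs in $\mathscr{P}_i$ are $\sim$-related'' to ``the $\sim$-pairs are \emph{exactly} the consecutive pairs'', and this is the only point requiring any care.
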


Recall the relation $\sim$ on the set $U_\pi$ of unoriented atoms of $\pi$, which generates the equivalence relation $\approx$ whose equivalence classes are persistent paths. We have $U \sim V$ if $U,V$ appear next to each other on a persistent path.

\begin{defn}\label{defn:s_0} Let $\mathscr{S}_0$ be the oriented graph with vertex set $U_\pi$ where two variables $U, V \in U_\pi$ are connected by an edge $e: U \lto V$ if $U \sim V$ and $U <_0 V$.
\end{defn}

In the terminology of Section \ref{section:falling_roofs} this is a linear $<_0$-graph. This is a subgraph of the realisation $\mathscr{R}_{<_0}$.

\begin{defn}\label{defn:sequence_gpi0} We denote by $G^{(0)}_\pi$ the ordered set of polynomials $G_{\mathscr{S}_0}$ of Definition \ref{defn:gen_set_allowed},
\begin{equation}
G^{(0)}_\pi = \big\{ V - U \l e: U \lto V \text{ is an edge in } \mathscr{S}_0 \big\}\,.
\end{equation}
\end{defn}

Up to signs, $G^{(0)}_\pi$ is just $G_\pi$.

\begin{lemma}\label{lemma:edivesG0} If $f,g$ are distinct elements of $G = G^{(0)}_\pi$ then $S(f,g) \longrightarrow_G 0$.
\end{lemma}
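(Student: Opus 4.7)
The plan is to reduce the claim to a standard Gröbner basis fact: if two polynomials have coprime leading monomials, then their $S$-polynomial reduces to zero modulo the pair.

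First I would observe that every element of $G^{(0)}_\pi$ has the form $V - U$ with $U <_0 V$, so its leading monomial with respect to $<_0$ is the single variable $V$. By Lemma \ref{lemma:LMdiff}, whose conclusion transfers from $G_\pi$ to $G^{(0)}_\pi$ since the two collections differ only by signs on generators, distinct elements of $G^{(0)}_\pi$ have distinct leading monomials. Since each such leading monomial is a single variable, distinctness already forces coprimality: if $f = V - U$ and $g = V' - U'$ are distinct elements of $G^{(0)}_\pi$ then $V \neq V'$ and hence $\operatorname{LCM}(\LM(f), \LM(g)) = V V' = \LM(f)\LM(g)$.

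At that point the claim follows from the standard result \cite[\S 2.9 Proposition 4]{Grobner}: if $\operatorname{LCM}(\LM(f), \LM(g)) = \LM(f)\LM(g)$ then $S(f,g) \longrightarrow_{\{f,g\}} 0$, and a fortiori $S(f,g) \longrightarrow_G 0$. Alternatively, one can give a self-contained verification using the identity
\[
S(V - U,\, V' - U') \;=\; U'(V - U) \;-\; U(V' - U')\,,
\]
and checking that each of the two summands on the right has multidegree at most that of $S(V - U, V' - U') = VU' - V'U$, which is exactly the multidegree condition defining $\longrightarrow_G 0$.

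There is no real obstacle in this lemma: it is essentially Buchberger's first criterion applied in the degenerate case where every leading monomial in the generating set has degree one. The only point warranting a sentence of justification is the passage from Lemma \ref{lemma:LMdiff} (stated for $G_\pi$) to the analogous statement for $G^{(0)}_\pi$, which is immediate since negating a polynomial does not alter its leading monomial.
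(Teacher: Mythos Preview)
Your proof is correct. Your primary route---observing that the leading monomials are distinct single variables, hence coprime, and then invoking Buchberger's first criterion \cite[\S 2.9 Proposition~4]{Grobner}---is a slight abstraction of what the paper does. The paper instead carries out the computation by hand: with $f = X - X'$, $g = Y - Y'$ and $X >_0 Y$, it computes $S(f,g) = XY' - X'Y$ and then rewrites this as $Y'f - X'g$, checking the multidegree bounds directly. Your ``alternative self-contained verification'' is exactly this computation (with $U',U$ in place of $Y',X'$), so the two arguments coincide at that level. The only gain from your primary route is that it names the phenomenon (coprime leading monomials) rather than reproving a special case of it; the paper's version has the virtue of being entirely self-contained. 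Your remark about transferring Lemma~\ref{lemma:LMdiff} from $G_\pi$ to $G^{(0)}_\pi$ is correct and worth including, though the paper silently makes the same passage.
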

\begin{proof}
	Suppose $f = X - X', g = Y - Y'$. By Lemma \ref{lemma:LMdiff} we may assume $X >_0 Y$. Then
	\begin{align*}
		S(f,g) &= \frac{XY}{X}(X - X') - \frac{XY}{Y}(Y - Y')\\
		&= Y(X - X') - X(Y - Y')\\
		&= XY' - X'Y
	\end{align*}
	Hence $\LT(S(f,g)) = XY'$. Rearranging
	\begin{align*}
		&= Y'( X - X' ) - X'( Y - Y' )\\
		&= Y'f - X' g
	\end{align*}
	and $\LT(Y'f) = XY'$, $\LT(X'g) = X'Y < XY'$ so $S(f,g) \longrightarrow_G 0$.
%	\begin{equation}
%		\begin{array}{rll}
%			\makecell{q_0 \\ q_1} & \makecell{Y' \\ -X'}\\
%			\makecell{f \\ g} & \showdiv{XY' - YX'}\\
%			%
%			&\hspace{0.45em} XY' - X'Y'\\
%			\CMidRule[0.0ex][9.0ex]{2-2}
%			&\hspace{0.45em} \hphantom{XY' - {}}  X'Y' - X'Y\\
%			& \hspace{0.45em} \hphantom{XY' - {}} X'Y'-X'Y\\
%			\CMidRule[9.0ex][0.0ex]{2-2}
%			& \hspace{0.45em} \hphantom{XY'-X'Y'+X} 0
%		\end{array}
%	\end{equation}
\end{proof}

\begin{proposition}\label{prop:minimal_grob} $G^{(0)}_\pi$ is a minimal Gröbner basis for $I_\pi$ with respect to $<_0$.
\end{proposition}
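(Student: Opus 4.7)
The plan is to combine Buchberger's criterion with the two lemmas immediately preceding the statement. First I would observe that $G^{(0)}_\pi$ generates $I_\pi$: by Definition \ref{defn:sequence_gpi0} its elements are, up to sign, exactly the generators in $G_\pi$, and $I_\pi = \langle G_\pi \rangle$ by Definition \ref{defn:ideal_proof}.

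Next I would invoke the standard Buchberger criterion (for instance the version in \cite[\S 2.9 Theorem 3]{Grobner}) which says that a generating set $G$ of an ideal is a Gröbner basis with respect to a fixed monomial order if and only if $S(f,g) \longrightarrow_G 0$ for every pair of distinct elements $f,g \in G$. This hypothesis is exactly the content of Lemma \ref{lemma:edivesG0}, so $G^{(0)}_\pi$ is a Gröbner basis for $I_\pi$ with respect to $<_0$.

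For minimality, recall that a Gröbner basis $G$ is minimal if each element has leading coefficient $1$ and no $\LT(g)$ lies in the ideal generated by $\LT(G \setminus \{g\})$. Every $f \in G^{(0)}_\pi$ has the form $V - U$ with $U <_0 V$, so $\LT(f) = V$ is a single variable with leading coefficient $1$. By Lemma \ref{lemma:LMdiff}, the leading monomials of distinct elements of $G^{(0)}_\pi$ are distinct single variables, and divisibility among single variables forces equality; hence no leading monomial of an element of $G^{(0)}_\pi$ is divisible by the leading monomial of a different element. This establishes minimality.

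There is no real obstacle here: both the Gröbner property and minimality reduce immediately to the two preceding lemmas together with the observation that each leading term is a single variable with coefficient $1$.
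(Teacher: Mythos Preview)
Your proof is correct and follows essentially the same approach as the paper: the Gr\"obner property via Buchberger's criterion and Lemma \ref{lemma:edivesG0}, and minimality from the fact that leading terms are distinct single variables. Your minimality argument, which invokes Lemma \ref{lemma:LMdiff} directly, is slightly more streamlined than the paper's, which instead works per persistent path and explicitly writes out $\langle \LT(G \setminus \{f\}) \rangle$; both amount to the same observation.
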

\begin{proof}
To prove $G = G^{(0)}_\pi$ is a Gröbner basis we need to (by \cite[\S 2.9 Theorem 3]{Grobner}) prove that $S(f,g) \longrightarrow_G 0$ for distinct elements $f,g \in G$. But this is Lemma \ref{lemma:edivesG0}. To prove $G$ is a minimal Gröbner basis we can treat each persistent path $\mathscr{P} = X_1, \ldots, X_r$ separately. The part of $G$ associated to $\mathscr{P}$ is $\{ X_{i+1} - X_i \}_{i=1}^{r-1}$. With $f = X_{i+1} - X_i$ it is clear that $\operatorname{LC}(f) = 1$ and $\langle \operatorname{LT}( G \setminus \{ f \}) \rangle = \langle X_2, \ldots, X_{i}, X_{i+2}, \ldots, X_r \rangle$ which does not contain $\LT(f)$.
\end{proof}

\begin{cor} $G_\pi$ is a Gröbner basis for $I_\pi$.
\end{cor}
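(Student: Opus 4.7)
The plan is to deduce this corollary directly from Proposition \ref{prop:minimal_grob} by observing that $G_\pi$ and $G^{(0)}_\pi$ agree up to signs on generators. Recall that $G_\pi = \bigcup_{l \in \mathcal{L}} G_l$ where each $G_l$ consists of differences of the form $X_i - X'_i$ (or $Y_j - Y'_j$), whereas $G^{(0)}_\pi$ consists of elements $V - U$ where $U \sim V$ and specifically $U <_0 V$. By definition of $\sim$ (Definition \ref{defn:simtilde}), $U \sim V$ holds precisely when $U - V$ or $V - U$ lies in $G_\pi$, so the set $G^{(0)}_\pi$ is obtained from $G_\pi$ by, wherever necessary, replacing a generator $f$ by $-f$ so that its leading monomial (with respect to $<_0$) appears with positive sign.

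First I would state explicitly that $G_\pi = \{ \pm f : f \in G^{(0)}_\pi\}$ as a set, or equivalently that each generator of $G_\pi$ equals $\pm g$ for a unique $g \in G^{(0)}_\pi$. Then the key observation is that the property of being a Gröbner basis depends only on the ideal generated and on the set of leading monomials of its elements (see \cite[\S 2.5]{Grobner}); multiplying a generator by a nonzero scalar (here $-1$) leaves both the generated ideal and the leading monomial unchanged. Hence $\langle \LT(G_\pi) \rangle = \langle \LT(G^{(0)}_\pi) \rangle$ and $\langle G_\pi \rangle = \langle G^{(0)}_\pi \rangle = I_\pi$.

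Therefore, since Proposition \ref{prop:minimal_grob} asserts that $G^{(0)}_\pi$ is a Gröbner basis for $I_\pi$ with respect to $<_0$, namely that $\langle \LT(I_\pi)\rangle = \langle \LT(G^{(0)}_\pi)\rangle$, we immediately conclude $\langle \LT(I_\pi) \rangle = \langle \LT(G_\pi) \rangle$, which is the definition of $G_\pi$ being a Gröbner basis for $I_\pi$. There is no real obstacle here; the only thing worth being careful about is making clear that the sign ambiguity in the definition of $G_l$ (Definitions \ref{ideal_axiomcut} and \ref{ideal_axiompartensor}) is precisely what is resolved by the orientation convention in Definition \ref{defn:s_0}, so that the bijection between $G_\pi$ and $G^{(0)}_\pi$ up to signs is unambiguous.
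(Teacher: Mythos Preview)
Your proposal is correct and follows exactly the implicit reasoning of the paper: the corollary is stated immediately after Proposition \ref{prop:minimal_grob}, and the paper had already remarked (just before Lemma \ref{lemma:edivesG0}) that ``Up to signs, $G^{(0)}_\pi$ is just $G_\pi$.'' Your argument simply makes explicit that multiplying generators by $-1$ preserves both the generated ideal and the leading monomials, which is precisely why the paper treats this as an immediate corollary without further proof.
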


\begin{remark} While $G^{(0)}_\pi$ is minimal, it is not the \emph{reduced} Gröbner basis of $I_\pi$ as soon as there is a persistent path with more than two variables in $\pi$. The reduced Gröbner basis for $I_\pi$ is the union, over all persistent paths $\mathscr{P}$ in the above notation, of $X_2 - X_1, \ldots, X_i - X_1, \ldots, X_r - X_1$.
\end{remark}

\subsection{Reduction and monomial orders}\label{sec:monomial_order}

In this section $\Gamma: \pi \lto \pi'$ is a reduction sequence between proof nets with single conclusion $A$. Let $\mathscr{P}_1,\ldots,\mathscr{P}_m$ denote the persistent paths of $\pi$ ordered as in \eqref{eq:persistent_paths0} and let us name the variables $X_i$ so that $\mathscr{P}_1,\ldots,\mathscr{P}_m$ is $X_1, \ldots, X_n$ and $P_\pi = k[X_1,\ldots,X_n]$. That is, we name the variables according to their $<_0$-order.

Let $\mathscr{Q}_i$ be the subsequence of $\mathscr{P}_i$ consisting just of those unoriented atoms in $\pi'$ (those in the image of $T_\Gamma$) so that $\mathscr{Q}_1, \ldots, \mathscr{Q}_m$ is a partition of the unoriented atoms of $\pi'$. Let $\mathscr{P}_i \setminus \mathscr{Q}_i$ denote the complement of the subsequence $\mathscr{Q}_i$. Then
\begin{equation}\label{eq:order_gamma}
\mathscr{Q}_1, \ldots, \mathscr{Q}_m, \mathscr{P}_1 \setminus \mathscr{Q}_1,\ldots,\mathscr{P}_m \setminus \mathscr{Q}_m
\end{equation}
is the set of unoriented atoms of $\pi$ arranged in an order that depends on the reduction $\Gamma$. Note that $\mathscr{P}_i \setminus \mathscr{Q}_i$ are the variables in $\mathscr{P}_i$ eliminated during the reduction sequence.

\begin{defn}\label{defn:monomial_order} We write $U <_\Gamma V$ if $U$ is before $V$ in \eqref{eq:order_gamma}, reading from left to right. The monomial order $<_\Gamma$ on $P_\pi$ is the lexicographic order determined by $<_\Gamma$ on the variables.
\end{defn}

%To relate the defining ideal $I_\pi$ to that of a reduction $\pi'$ of $\pi$, we need a cohesive choice of generators for these two ideals. The following Definition gives a precise choice of such generators with respect to an order on the set of indeterminants of the indeterminants of the polynomial associated polynomial ring.
%We write $\LM_\Gamma, \LT_\Gamma$ for the leading monomial and leading term respectively, defined with respect to $<_\Gamma$. 

From the point of view of the realisation $\mathscr{R}_{<_\Gamma}$ of Section \ref{section:falling_roofs} the order $<_\Gamma$ ``lifts'' the variables in $P_\pi$ that are to be eliminated above those that persist in $P_{\pi'}$. Observe that this realisation refers to the relation between $<_0$ and $<_\Gamma$ (Remark \ref{remark:xaxis_vs_yaxis}).

\begin{defn}\label{defn:s_gamma} Let $\mathscr{S}_\Gamma$ be the oriented graph with vertex set $U_\pi$, where two variables $U, V \in U_\pi$ are connected by an edge $e: U \lto V$ if $U \sim V$ and $U <_\Gamma V$.
\end{defn}

In the terminology of Section \ref{section:falling_roofs} this is a linear $<_\Gamma$-graph which is a subgraph of the realisation $\mathscr{R}_{<_\Gamma}$. Note that the connected components of $\mathscr{S}_\Gamma$ correspond to persistent paths, since unoriented atoms in different persistent paths are not related by $\sim$.

\begin{defn}\label{defn:sequence_gpi} Let $G^{(\Gamma)}_\pi$ be the ordered set of polynomials $G_{\mathscr{S}_\Gamma}$ of Definition \ref{defn:gen_set_allowed},
\begin{equation}
G^{(\Gamma)}_\pi = \big\{ V - U \l e: U \lto V \text{ is an edge in } \mathscr{S}_\Gamma \big\}\,.
\end{equation}
\end{defn}

\begin{example}\label{example:canonical_detour_graph} Let $\gamma$ be the reduction of the $m$-redex of the canonical detour $\pi$ in Example \ref{example:canonical_detour}. The sequence \eqref{eq:order_gamma} is
\[
\mathscr{Q}, \underline{\mathscr{P} \setminus \mathscr{Q}} = X_1, X_2, X_3, X_6, X_7, X_{10}, X_{11}, X_{12}, \underline{X_4, X_5, X_8, X_9}
\]
where we underline the atoms in $\mathscr{P} \setminus \mathscr{Q}$ for the reader's convenience. The graph $\mathscr{S}_\gamma$ is shown in Figure \ref{figure:s_gamma_deotour}.

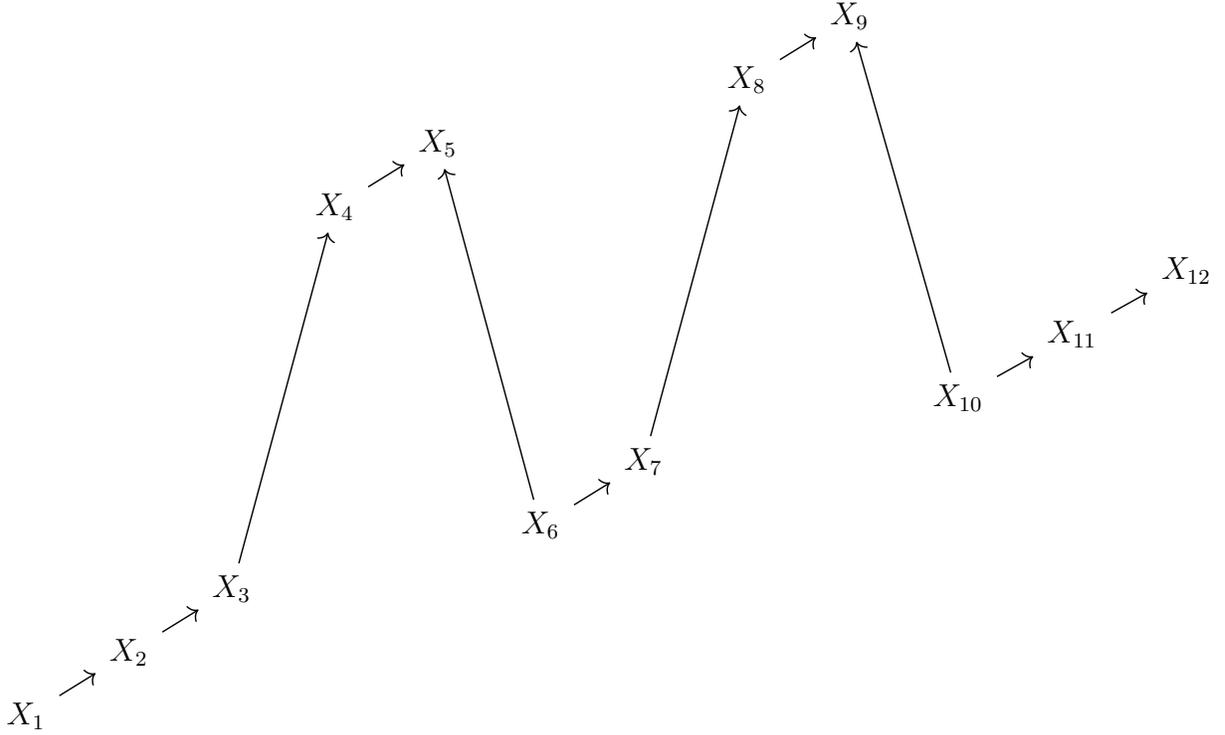
\begin{figure}[h]
\begin{center}
\begin{tikzcd}[row sep = tiny, column sep = small]
     &     &     &     &     &     &     &     & X_9 &                 \\
     &     &     &     &     &     &     & X_8                         \\
     &     &     &     & X_5                                           \\
     &     &     & X_4                                                 \\
     &     &     &     &     &     &     &     &     &     &     & X_{12}\\
     &     &     &     &     &     &     &     &     &     & X_{11}    \\
     &     &     &     &     &     &     &     &     & X_{10}          \\
     &     &     &     &     &     & X_7                               \\
     &     &     &     &     & X_6                                     \\
     &     & X_3\\
     & X_2\\
X_1
	\arrow[from=2-8,to=1-9]
	\arrow[from=4-4,to=3-5]
	\arrow[from=6-11,to=5-12]
	\arrow[from=7-10,to=6-11]
	\arrow[from=7-10,to=1-9]
	\arrow[from=8-7,to=2-8]
	\arrow[from=9-6,to=8-7]
	\arrow[from=9-6,to=3-5]
	\arrow[from=10-3,to=4-4]
	\arrow[from=11-2,to=10-3]
	\arrow[from=12-1,to=11-2]
\end{tikzcd}
\end{center}
\caption{The graph $\mathscr{S}_\gamma$ for the canonical detour of Example \ref{example:canonical_detour}.}
\label{figure:s_gamma_deotour}
\end{figure}
The sequence $G^{(\gamma)}_\pi$ is
\begin{align*}
& X_2 - X_1, X_3 - X_2, X_7 - X_6, X_{11} - X_{10}, X_{12} - X_{11},\\
&\qquad X_4 - X_3, X_5 - X_6, X_5 - X_4, X_8 - X_7, X_9 - X_{10}, X_9 - X_8\,.
\end{align*}
Note that the persistent path $\mathscr{P}$ passes through the $\cut$ link twice, which we can see clearly if we underline the atoms in $\mathscr{P} \setminus \mathscr{Q}$:
\begin{equation}\label{eq:persistent_path_good_ex}
X_1, X_2, X_3, \underline{X_4}, \underline{X_5}, X_6, X_7, \underline{X_8}, \underline{X_9}, X_{10}, X_{11}, X_{12}\,.
\end{equation}
Each pass contributes two consecutive unoriented atoms in $\mathscr{P} \setminus \mathscr{Q}$. Note that the sequence \eqref{eq:persistent_path_good_ex}, in which the atoms are ordered by appearance in the persistent path, is $<_\gamma$ increasing except at two points of ``defect'' where the path exits an edge incident at a $\cut$ link:
\begin{equation}
X_1 <_\gamma X_2 <_\gamma X_3 <_\gamma \underline{X_4} <_\gamma \underline{X_5} >_\gamma X_6 <_\gamma X_7 <_\gamma \underline{X_8} <_\gamma \underline{X_9} >_\gamma X_{10} <_\gamma X_{11} <_\gamma X_{12}\,.
\end{equation}
These are of course the roofs in $\mathscr{S}_{\gamma}$.
\end{example}

%\begin{lemma}\label{lem:distinct_leading_monomials}
%	If $f, g \in G^{(\gamma)}_\pi$ with $\LT(f) \neq \LT(g)$ then $\edives{S(f,g)}{G^{(\gamma)}_\pi} = 0$.
%\end{lemma}

\begin{lemma} $G^{(\Gamma)}_\pi$ is not a Gröbner basis for $I_\pi$ with respect to $<_\Gamma$.
\end{lemma}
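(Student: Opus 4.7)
The plan is to argue by contradiction using the Elimination Theorem (Theorem \ref{thm:elimination}). Recall that $<_\Gamma$ is a lexicographic order in which the variables of $\mathscr{Q} = \bigcup_i \mathscr{Q}_i$ (identified via $T_\Gamma$ with the generators of $P_{\pi'}$) all lie strictly below the eliminated variables $\bigcup_i (\mathscr{P}_i \setminus \mathscr{Q}_i)$. Iterating Corollary \ref{cor:ideals_intersect} along $\Gamma$ gives $I_\pi \cap k[\mathscr{Q}] = I_{\pi'}$. Suppose for contradiction that $G^{(\Gamma)}_\pi$ is a Gr\"obner basis for $I_\pi$ with respect to $<_\Gamma$. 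By Theorem \ref{thm:elimination}, the set $G^{(\Gamma)}_\pi \cap k[\mathscr{Q}]$ would then be a Gr\"obner basis, hence a generating set, for $I_{\pi'}$.

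I will exhibit a polynomial in $I_{\pi'}$ not in the ideal generated by $G^{(\Gamma)}_\pi \cap k[\mathscr{Q}]$. Since $\Gamma$ is nonempty, at least one atom $V$ of $\pi$ is eliminated. By Proposition \ref{prop:permutation}(iii), $V$ lies on some persistent path $\mathscr{P}_i$ of $\pi$, and since the conclusion of $\pi$ is preserved under reduction the two endpoints of $\mathscr{P}_i$ both lie in $\mathscr{Q}_i$. Choose $U, W \in \mathscr{Q}_i$ to be the persistent atoms closest to $V$ on either side of $V$ along $\mathscr{P}_i$, so that every atom strictly between $U$ and $W$ in the path is eliminated. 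By Remark \ref{rmk:persistency}, $U$ and $W$ are then consecutive in the reduced persistent path of $\pi'$; consequently $U \sim W$ in $\pi'$ and $W - U \in G_{\pi'} \subseteq I_{\pi'}$.

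It remains to show $W - U \notin \langle G^{(\Gamma)}_\pi \cap k[\mathscr{Q}] \rangle$. Note that $G^{(\Gamma)}_\pi \cap k[\mathscr{Q}]$ is precisely the set of polynomials $W' - U'$ where $U' \sim W'$ in $\pi$ and both $U', W' \in \mathscr{Q}$. Let $\approx_{\mathscr{Q}}$ denote the transitive closure of this restricted relation on $\mathscr{Q}$. A degree-comparison argument identical to the proof of Lemma \ref{lemma:simvsequal} shows that $W - U$ lies in the ideal $\langle G^{(\Gamma)}_\pi \cap k[\mathscr{Q}] \rangle$ if and only if $U \approx_{\mathscr{Q}} W$. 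But any $\sim$-chain in $\pi$ joining $U$ and $W$ must lie within a single persistent path (the equivalence classes of $\approx$ are persistent paths), and within $\mathscr{P}_i$ any such chain must traverse the eliminated atoms between $U$ and $W$. Hence no $\sim_{\mathscr{Q}}$-chain connects $U$ to $W$, giving the contradiction.

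The main point of care is the application of Remark \ref{rmk:persistency} together with the iterated Corollary \ref{cor:ideals_intersect} to guarantee that the pair $U, W$ has the required ``gap'' property; the remainder is bookkeeping with definitions. The statement then motivates the passage in the next section to the algorithmic content of Buchberger with Early Stopping, which bridges $G^{(\Gamma)}_\pi$ and a genuine Gr\"obner basis for $I_\pi$ under $<_\Gamma$.
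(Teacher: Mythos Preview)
Your argument is correct. The paper actually states this lemma without proof; its placement immediately after Example~\ref{example:canonical_detour_graph} suggests the intended justification is the observation that $\mathscr{S}_\Gamma$ necessarily contains a roof whenever $\Gamma$ is nonempty. From that one can argue directly: a roof at some $Z_j$ gives two elements of $G^{(\Gamma)}_\pi$ sharing the leading monomial $Z_j$, so along a persistent path of length $r$ the set $\{\LM(f) : f \in G^{(\Gamma)}_\pi\}$ contributes at most $r-2$ distinct variables, whereas $\langle \LT(I_\pi) \rangle$ contains all $r-1$ non-minimal variables on that path; hence $\langle \LT(G^{(\Gamma)}_\pi) \rangle \subsetneq \langle \LT(I_\pi) \rangle$.

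Your route via the Elimination Theorem is genuinely different but equally valid, and arguably cleaner in that it invokes only results already established in the paper (Corollary~\ref{cor:ideals_intersect}, Remark~\ref{rmk:persistency}, and the degree argument of Lemma~\ref{lemma:simvsequal}) rather than an ad hoc leading-term count. It also makes explicit the structural reason the lemma matters: the failure of $G^{(\Gamma)}_\pi$ to be a Gr\"obner basis is precisely the gap that Theorem~\ref{thm:elimination_ours} closes via the Buchberger step. One small point of presentation: your claim that any $\sim$-chain from $U$ to $W$ within $\mathscr{P}_i$ must pass through the eliminated block relies on the fact that the $\sim$-graph restricted to a persistent path is a simple linear chain (each atom has at most two $\sim$-neighbours); this is implicit in the linearity of $\mathscr{S}_0$ but worth stating.
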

%\begin{proof}
%Choose a $\gamma$-defect in some persistent path and let $f = f_{e},g = f_{e'}$ be as above. Then
%\[
%S(f,g) = f - g = Z_{i+1} - Z_{i-1}
%\]
%has leading term $Z_{i+1}$. Suppose that $S(f,g) = a f + b g$. If $af \neq 0$ then $Z_i \mid \LT_\gamma(af)$ and hence $\LT_\gamma(af) >_\gamma \LT_\gamma(S(f,g))$. Similarly if $bg \neq 0$ then $\LT_\gamma(bg) >_\gamma \LT_\gamma(S(f,g))$. Hence it is not the case that $S(f,g) \longrightarrow_G 0$. The claim now follows \cite[\S 2.9 Theorem 3]{Grobner}.
%\end{proof}

From this it is also clear that $G_\pi$ is not a Gröbner basis with respect to $<_\Gamma$.

\begin{remark}\label{remark:order_on_persistent_paths} The definition of the monomial orders $<_0, <_\Gamma$ involves ordering the persistent paths by the order of appearance of their last unoriented atom in $A$. This order could be chosen arbitrarily without changing our results; some choice must be made and this one appears relatively natural. The ordering on the persistent paths matters insofar as the Falling Roofs Algorithm chooses roofs in earlier persistent paths before later ones.
\end{remark}

\section{Main Theorems}\label{section:main_theorem}

%Let $U_i, V_i$ be such that $f_i = U_i - V_i$. Some notation: given $f \in P_\pi$ let $\ediv{f}{G^{(\gamma)}_\pi}$ denote the remainder of $f$ upon Euclidean division by the sequence $G^{(\gamma)}_\pi$ with respect to the monomial order $<_\gamma$ (that is, $\overline{f}$ is the output of the Division Algorithm (Algorithm \ref{alg:division} below) when run on input $(G_\pi^{(\gamma)}, f)$). 

Let $\pi$ be a proof net with single conclusion $A$. We wrote informally in Section \ref{section:intro_example} about how the structure of $\pi$ implicitly defines a relationship between unoriented atoms on the ``boundary'' of the proof net, and how computation is the process of making this implicit relationship explicit. Let us now make these comments more precise.

Let $\bold{U}$ be the subsequence of positive unoriented atoms in $A$, and $\bold{V}$ the subsequence of negative unoriented atoms in $A$. The composite
\begin{equation}\label{eq:quotient_map_kernel}
k[\bold{U},\bold{V}] \lto P_\pi \lto P_\pi/I_\pi
\end{equation}
is by Proposition \ref{prop:permutation} a surjective morphism of $k$-algebras, and we denote by $I_{\partial \pi}$ the ideal which is its kernel. There is a commutative diagram
\begin{equation}
\xymatrix@C+2pc@R+2pc{
k[\bold{U}, \bold{V}] \ar[d]\ar[r]^-{\operatorname{inc}} & P_\pi \ar[d]\\
k[\bold{U}, \bold{V}]/I_{\partial \pi} \ar[r]_-{\cong} & P_\pi/I_\pi
}
\end{equation}
By Proposition \ref{prop:permutation}(iii)
\begin{equation}
I_{\partial \pi} = \big\langle \{ U_i - V_{\sigma i} \}_i \big\rangle
\end{equation}
for a permutation $\sigma$ which contains the structural information of the normal form of $\pi$ (see Appendix \ref{section:mult_goi0}). The structure of $\pi$, as represented say by the generating set $G_\pi$ for the ideal $I_\pi$, contains the same information but in an implicit form; the computational process of reduction of $\pi$ to normal form may thus be identified with a process of discovering the generating set $\{ U_i - V_{\sigma i} \}_i$ for $I_{\partial \pi}$ from the generating set $G_\pi$ for $I_\pi$.

There are many such processes: indeed, we can now sketch a machine for generating them. Let $<$ be a total order on the set of unoriented atoms in $\pi$
\[
U_\pi = \bold{U} \sqcup \bold{V} \sqcup \bold{W} \text{ with } \bold{V} < \bold{U} < \bold{W}
\]
in the sense that for any $i,j,k$ we have $V_i < U_j < W_k$. Let $\mathscr{S} = \mathscr{S}_{<}$ denote the graph (see Section \ref{sec:monomial_orders_before_cut}) with an edge $X \lto X'$ if $X \sim X'$ and $X < X'$. Then $G_{\mathscr{S}}$ is, up to signs, the generating set $G_\pi$ of $I_\pi$. As our computational process we take the Buchberger Algorithm with Early Stopping applied to $G_{\mathscr{S}}$, or what is the same, the Falling Roofs Algorithm applied to $\mathscr{S}$. These algorithms return $G_{\mathscr{N}}$ and $\mathscr{N}$ respectively (Proposition \ref{prop:buchberger_equals_fallingroof}). Let $\mathscr{N}^+$ be the subgraph of live edges in $\mathscr{N}$.

\begin{thm} $G_{\mathscr{N}^+} = \big\{ U_i - V_{\sigma i} \big\}_i$ is a Gröbner basis for $I_{\partial \pi}$.
\end{thm}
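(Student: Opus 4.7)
The plan is to string together three results already established in the paper. First, Corollary \ref{cor:falling_roofs_computes_grobner} yields that $G_{\mathscr{N}}$ is a Gröbner basis for $I_\pi$ with respect to $<$, since $G_{\mathscr{S}}$ coincides with $G_\pi$ up to signs. Second, the Elimination Theorem (Theorem \ref{thm:elimination}) applies because the hypothesis $\bold{V} < \bold{U} < \bold{W}$ places the $\bold{W}$-variables maximally in the lex order, giving that $G_{\mathscr{N}} \cap k[\bold{U}, \bold{V}]$ is a Gröbner basis for $I_\pi \cap k[\bold{U}, \bold{V}] = I_{\partial \pi}$. Third, Proposition \ref{prop:cutting_is_living} (taken with $\bold{X} = \bold{U} \sqcup \bold{V}$ and $\bold{Y} = \bold{W}$) identifies $G_{\mathscr{N}} \cap k[\bold{U}, \bold{V}]$ as $G_{\mathscr{N}^+}$, completing the ``Gröbner basis for $I_{\partial \pi}$'' half of the statement.

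To invoke Proposition \ref{prop:cutting_is_living} I would verify its two hypotheses. Condition (i), that every $\sim$-valence-one vertex is in $\bold{X}$, holds because an unoriented atom has $\sim$-valence one precisely when it sits on an edge incident at a $\operatorname{c}$-link (interior edges participate in two links and thus acquire two $\sim$-neighbours), and these are exactly the atoms of the conclusion $A$, namely $\bold{U} \sqcup \bold{V}$. Condition (ii), that the $\bold{X}$-subsequence of the horizontal enumeration of the realisation is $<$-increasing, is arranged by choosing the enumeration so that all $\bold{V}$-variables precede all $\bold{U}$-variables, each block listed in $<$-order, with the $\bold{W}$-variables slotted in where persistent-path adjacency demands so that $\mathscr{S}$ remains a subgraph of $\mathscr{R}_<$; since the Falling Roofs algorithm depends only on $<$ and not on the horizontal coordinate, this is purely bookkeeping.

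It remains to identify the set $G_{\mathscr{N}^+}$ with $\{U_i - V_{\sigma i}\}_i$, which I would do one persistent path at a time. By Proposition \ref{prop:permutation}(iii) the $i$-th persistent path has the form $V_{\sigma i} = Z_1 \sim \cdots \sim Z_r = U_i$ with all interior atoms $Z_j$ in $\bold{W}$. Proposition \ref{prop:cutting_is_living} prohibits any $\bold{W}$-vertex from carrying nonzero valence in $\mathscr{N}^+$, so on this component $\mathscr{N}^+$ touches only the endpoints $V_{\sigma i}$ and $U_i$. Lemma \ref{lemma:valence_one_pres} says each endpoint carries exactly one live edge, and linearity of $\mathscr{N}^+$ forces these two live edges to coincide in a single edge oriented $V_{\sigma i} \to U_i$ (low-to-high, since $V_{\sigma i} < U_i$), contributing the polynomial $U_i - V_{\sigma i}$. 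Taking the disjoint union over $i$ exhausts $\mathscr{N}^+$ and yields the stated equality.

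The main obstacle I anticipate is the reconciliation of condition (ii) of Proposition \ref{prop:cutting_is_living} with the requirement that $\mathscr{S}$ be a subgraph of $\mathscr{R}_<$: each persistent path carries one $\bold{V}$-endpoint and one $\bold{U}$-endpoint, and any enumeration that traverses paths as contiguous blocks interleaves low $V$'s with higher $U$'s. The conceptual resolution is that $\mathscr{N}$ is intrinsic to $<$ alone, so the horizontal enumeration is a bookkeeping choice that can be made to satisfy (ii) without altering the output of the algorithm. Once this is dispensed with, everything else is an application of previously established machinery.
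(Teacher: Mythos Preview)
Your approach matches the paper's almost exactly: invoke Proposition \ref{prop:cutting_is_living} with $\bold{X} = \bold{U} \sqcup \bold{V}$ to obtain $G_{\mathscr{N}^+} = G_{\mathscr{N}} \cap k[\bold{U},\bold{V}]$ as a Gröbner basis for $I_{\partial\pi}$, then argue per persistent path that the only surviving live edge is $V_{\sigma i} \to U_i$. The paper's proof is terser and simply asserts that the hypotheses of Proposition \ref{prop:cutting_is_living} hold, so you are in fact being more careful than the paper in flagging condition (ii).

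That said, your proposed resolution of (ii) does not work as written. The requirement that $\mathscr{S}$ be a subgraph of the realisation $\mathscr{R}_<$ forces each persistent path to occupy a contiguous horizontal block, since $\sim$-edges must join horizontally adjacent vertices. With two or more paths, the horizontal order of the $\bold{X}$-variables then necessarily interleaves a $U$-endpoint of one path with the $V$-endpoint of the next, and $\bold{V} < \bold{U}$ makes (ii) fail. So you cannot simultaneously have $\mathscr{S} \subseteq \mathscr{R}_<$ and (ii) by any choice of horizontal enumeration; appealing to the enumeration being ``bookkeeping'' does not discharge the hypothesis.

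The clean fix is the one you are already implicitly using in your final paragraph: the Falling Roofs algorithm acts independently on each connected component of $\mathscr{S}$, so one may apply Proposition \ref{prop:cutting_is_living} one persistent path at a time. For a single path $V_{\sigma i}, W_1, \ldots, W_k, U_i$ the path-order enumeration makes $\mathscr{S}$ a subgraph of $\mathscr{R}_<$ and satisfies (ii) trivially, since the only two $\bold{X}$-vertices are $V_{\sigma i}$ (leftmost) and $U_i$ (rightmost) with $V_{\sigma i} < U_i$. Alternatively, inspect where (ii) is actually used in the proof of Proposition \ref{prop:cutting_is_living}: it only serves to rule out two distinct $\bold{X}$-sources targeting a common $\bold{X}$-vertex, and that is impossible here because each component contains exactly two $\bold{X}$-vertices.
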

\begin{proof}
The graph $\mathscr{S}$ satisfies the hypotheses of Proposition \ref{prop:cutting_is_living} with $\bold{X} = \bold{V} \cup \bold{U}$, so the subgraph $\mathscr{N}^+$ of live edges in $\mathscr{N}$ satisfies that $G_{\mathscr{N}^+} = G_{\mathscr{N}} \cap k[\bold{U}, \bold{V}]$ is a Gröbner basis of $I_\pi \cap k[\bold{U},\bold{V}] = I_{\partial \pi}$. For each $i$ there is a persistent path beginning at $V_{\sigma i}$ and ending at $U_i$, and these endpoints maintain valence one in the graph of live edges throughout the Falling Roofs algorithm; since only vertices within the same persistent path may be connected by an edge in $\mathscr{N}$ this means that only edges in $\mathscr{N}^+$ are of the form $V_{\sigma i} \lto U_i$, as claimed.
\end{proof}

In this section we answer two further questions:
\begin{itemize}
\item Is there a choice of total order on $U_\pi$ under which the Gröbner basis algorithms match the familiar process of cut-elimination as represented by the reduction rules of Definition \ref{def:reduction}? Theorem \ref{thm:elimination_ours} answers this in the affirmative.

\item The normal form of $\pi$ contains more unoriented atoms than just $\bold{U}, \bold{V}$, so is there a choice of total order on $U_\pi$ for which the Gröbner basis algorithms produce the normal proof net $\widetilde{\pi}$ with its ideal $I_{\widetilde{\pi}}$?	
\end{itemize}

\subsection{Elimination vs Cut-Elimination}
	
Let $\Gamma: \pi \lto \pi'$ be a reduction sequence between proof nets with single conclusion $A$. We view $P_{\pi'}$ as a subring of $P_\pi$ via the algebra homomorphism $T_\Gamma$ of Definition \ref{defn:reduction_sequence}. Recall the following notation:
\begin{itemize}
\item $<_\Gamma$ is a monomial order; variables eliminated by $\Gamma$ are ``large'' (Definition \ref{defn:monomial_order}).
\item $G_{\pi}^{(\Gamma)}$ is a generating set for $I_\pi$ (Definition \ref{defn:sequence_gpi}). Up to signs it is $G_\pi$.
\item $\mathbb{B}_{es}(G_{\pi}^{(\Gamma)}, <_\Gamma)$ is a Gröbner basis for $I_\pi$ obtained by applying the Buchberger with Early Stopping algorithm (Algorithm \ref{alg:elimination}) to $G_{\pi}^{(\Gamma)}$.
\item $G_{\pi'}^{(0)}$ is a (minimal) Gröbner basis for $I_{\pi'}$ (Definition \ref{defn:sequence_gpi0}).
\end{itemize}

\begin{thm}\label{thm:elimination_ours}
	There is an equality of sets
	\begin{equation}\label{eq:thm_elimination}
		G_{\pi'}^{(0)} = \mathbb{B}_{es}(G_{\pi}^{(\Gamma)}, <_\Gamma) \cap P_{\pi'}\,.
	\end{equation}
\end{thm}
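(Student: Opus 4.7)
The plan is to apply the two main tools of Section \ref{section:falling_roofs} in sequence. Since $G_\pi^{(\Gamma)} = G_{\mathscr{S}_\Gamma}$ by Definition \ref{defn:sequence_gpi}, Proposition \ref{prop:buchberger_equals_fallingroof}(iii) identifies $\mathbb{B}_{es}(G_\pi^{(\Gamma)}, <_\Gamma)$ with $G_{\mathscr{N}}$, where $\mathscr{N}$ is the output of the Falling Roofs algorithm on $\mathscr{S}_\Gamma$. I would then apply Proposition \ref{prop:cutting_is_living} with $\bold{X} = U_{\pi'}$ (so that $k[\bold{X}]$ is identified with $P_{\pi'}$ via $T_\Gamma$) and $\bold{Y} = U_\pi \setminus U_{\pi'}$. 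Hypothesis (i) holds because the vertices of valence one in $\mathscr{S}_\Gamma$ are the endpoints of persistent paths of $\pi$, which by Proposition \ref{prop:permutation} are the atoms of the conclusion $A$, and so lie in $U_{\pi'}$ since $\pi'$ shares that conclusion. Hypothesis (ii) holds because the $x$-axis of the realisation $\mathscr{R}_{<_\Gamma}$ uses the $<_0$-enumeration of $U_\pi$, and comparing Definitions \ref{defn:monomial_order0} and \ref{defn:monomial_order} shows that both $<_0$ and $<_\Gamma$ restrict on $U_{\pi'}$ to the same sequence $\mathscr{Q}_1, \ldots, \mathscr{Q}_m$. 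Proposition \ref{prop:cutting_is_living} then yields $G_{\mathscr{N}^+} = G_{\mathscr{N}} \cap P_{\pi'}$, where $\mathscr{N}^+$ denotes the subgraph of live edges in $\mathscr{N}$.

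It remains to show $\mathscr{N}^+$ coincides with the graph $\mathscr{S}_0$ of Definition \ref{defn:s_0} for $\pi'$, as this will give $G_{\mathscr{N}^+} = G_{\pi'}^{(0)}$. By Remark \ref{rmk:persistency} iterated along $\Gamma$, the persistent paths of $\pi'$ are precisely the subsequences $\mathscr{Q}_1, \ldots, \mathscr{Q}_m$, and these also index the connected components of $\mathscr{S}_0$ for $\pi'$, which has edges joining consecutive atoms of each $\mathscr{Q}_i$ in positional order. On the other hand, Proposition \ref{prop:cutting_is_living} tells us $\mathscr{N}^+$ is linear, has nonzero-valence vertices only in $U_{\pi'}$, and contains no live roofs, so each of its connected components is a $<_\Gamma$-monotone linear path. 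By Lemma \ref{lemma:valence_one_pres}, the endpoints $V_{\sigma i}, U_i$ of each $\mathscr{P}_i$ keep valence one in $\mathscr{N}^+$; and the connectivity-preservation argument embedded in the proof of Proposition \ref{prop:cutting_is_living} forces all atoms of $\mathscr{Q}_i$ into a single connected component of $\mathscr{N}^+$. That component is therefore the unique $<_\Gamma$-monotone linear path on $\mathscr{Q}_i$, which—because $<_\Gamma$ restricted to $\mathscr{Q}_i$ matches the positional order of $\mathscr{Q}_i$—is exactly the corresponding component of $\mathscr{S}_0$ for $\pi'$.

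The main obstacle I anticipate is this second step: reading off from Proposition \ref{prop:cutting_is_living} and Lemma \ref{lemma:valence_one_pres} the precise combinatorial shape of $\mathscr{N}^+$ on each $\mathscr{Q}_i$. One must be sure that the Falling Roofs algorithm genuinely straightens each persistent path of $\pi$ into a monotone $<_\Gamma$-path through the retained atoms, which rests on the endpoints lying in $U_{\pi'}$ (hence low in $<_\Gamma$) together with the agreement $<_\Gamma|_{U_{\pi'}} = <_0|_{U_{\pi'}}$. Once this is in hand, the chain of equalities
\[
G_{\pi'}^{(0)} = G_{\mathscr{N}^+} = G_{\mathscr{N}} \cap P_{\pi'} = \mathbb{B}_{es}(G_\pi^{(\Gamma)}, <_\Gamma) \cap P_{\pi'}
\]
completes the proof.
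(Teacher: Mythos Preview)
Your proposal is correct and follows essentially the same route as the paper's proof: identify $\mathbb{B}_{es}(G_\pi^{(\Gamma)}, <_\Gamma)$ with $G_{\mathscr{N}}$ via Proposition \ref{prop:buchberger_equals_fallingroof}, apply Proposition \ref{prop:cutting_is_living} to get $G_{\mathscr{N}^+} = G_{\mathscr{N}} \cap P_{\pi'}$, and then argue that $\mathscr{N}^+$ coincides with $\mathscr{S}_0$ for $\pi'$ because $<_\Gamma$ and $<_0$ agree on $U_{\pi'}$. The paper's own proof compresses your second paragraph into a single sentence (``it is clear that $\mathscr{N}^+$ is just the graph $\mathscr{S}_0$''), whereas you spell out the verification of hypotheses (i) and (ii) and the combinatorial identification of $\mathscr{N}^+$ with $\mathscr{S}_0$ more carefully; this is all to the good, and your anticipation of the second step as the delicate point is exactly right.
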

%We postpone the proof to establish a Lemma first. We will consider the Buchberger Algorithm applied to a set of polynomials of a particularly simple form, the next Lemma states that in many cases, the remainder of these divisions is zero.

\begin{proof}
By definition $G^{(\Gamma)}_\pi = G_{\mathscr{S}}$ where $\mathscr{S}$ is the graph of Definition \ref{defn:s_gamma}. If $\mathscr{N}$ is the output of the Falling Roofs algorithm applied to $\mathscr{S}$ then by Proposition \ref{prop:buchberger_equals_fallingroof}
\begin{equation}
\mathbb{B}_{es}(G_{\pi}^{(\Gamma)}, <_\Gamma) = G_{\mathscr{N}}\,.
\end{equation}
We are in the situation of Section \ref{section:falling_roofs_elimination} where the $\bold{X}$-variables are those in $P_{\pi'}$. The monomial order $<_\Gamma$ satisfies the conditions of Proposition \ref{prop:cutting_is_living}, from which we deduce that $G_{\mathscr{N}^+} = G_{\mathscr{N}} \cap P_{\pi'}$ is a Gröbner basis for $I_{\pi'}$. Since the $\bold{X}$ variables are ordered by $<_\Gamma$ in $P_{\pi}$ as they are by $<_0$ in $P_{\pi'}$ it is clear that $\mathscr{N}^+$ is just the graph $\mathscr{S}_0$ of Definition \ref{defn:s_0} and hence $G_{\mathscr{N}^+} = G^{(0)}_{\pi'}$.
\end{proof}

The core idea of the theorem can be seen in the following example.

\begin{example} Let $\pi$ denote the canonical detour as in Example \ref{example:canonical_detour}. We continue Example \ref{example:canonical_detour_graph} by presenting the Falling Roofs algorithm on the graph of Figure \ref{figure:s_gamma_deotour}. The result of the algorithm is the graph $\mathscr{N}$ in Figure \ref{figure:s_gamma_deotour2}. Note that in the context of Section \ref{section:falling_roofs_elimination} if we partition the variables of our polynomial ring
\[
P_\pi = k[X_1, X_2, X_3, \underline{X_4}, \underline{X_5}, X_6, X_7, \underline{X_8}, \underline{X_9}, X_{10}, X_{11}, X_{12}]
\]
into the set $\bold{X}$ of those variables \emph{without} underlines and the set $\bold{Y}$ of those variables \emph{with} underlines, then ``cutting off the top'' of the graph $\mathscr{N}$ means cutting it at a height with all the underlined variables above and all the other variables below. Recall from Proposition \ref{prop:buchberger_equals_fallingroof} that the Buchberger algorithm with Early Stopping on $G^{(\gamma)}_\pi$ computes $\mathscr{N}$:
\[
\mathbb{B}_{es}(G^{(\gamma)}_\pi, <_\gamma) = G_{\mathscr{N}}\,.
\]
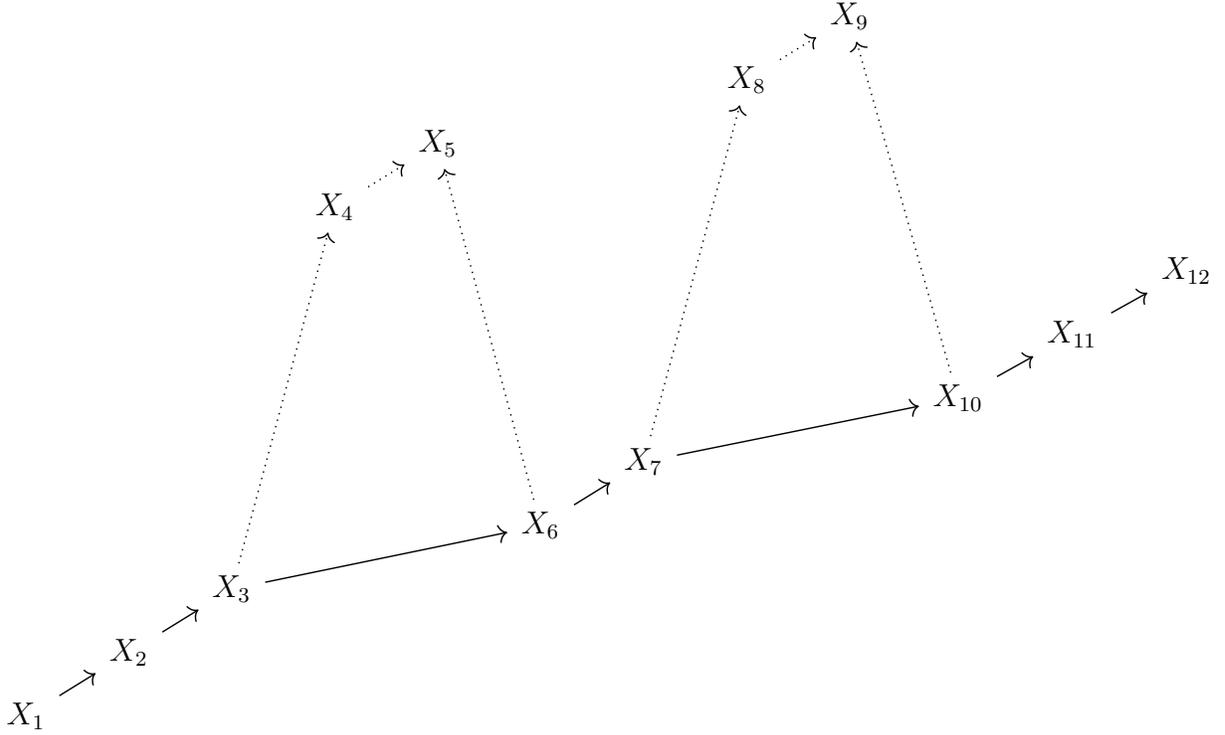
\begin{figure}
\begin{center}
\begin{tikzcd}[row sep = tiny, column sep = small]
     &     &     &     &     &     &     &     & X_9 &                 \\
     &     &     &     &     &     &     & X_8                         \\
     &     &     &     & X_5                                           \\
     &     &     & X_4                                                 \\
     &     &     &     &     &     &     &     &     &     &     & X_{12}\\
     &     &     &     &     &     &     &     &     &     & X_{11}    \\
     &     &     &     &     &     &     &     &     & X_{10}          \\
     &     &     &     &     &     & X_7                               \\
     &     &     &     &     & X_6                                     \\
     &     & X_3\\
     & X_2\\
X_1
	\arrow[dotted, from=2-8,to=1-9]
	\arrow[dotted, from=4-4,to=3-5]
	\arrow[from=6-11,to=5-12]
	\arrow[from=7-10,to=6-11]
	\arrow[dotted, from=7-10,to=1-9]
	\arrow[dotted, from=8-7,to=2-8]
	\arrow[from=9-6,to=8-7]
	\arrow[dotted, from=9-6,to=3-5]
	\arrow[dotted, from=10-3,to=4-4]
	\arrow[from=11-2,to=10-3]
	\arrow[from=12-1,to=11-2]
	\arrow[from=10-3,to=9-6]
	\arrow[from=8-7,to=7-10]
\end{tikzcd}
\end{center}
\caption{The Falling Roofs algorithm on the canonical detour.}
\label{figure:s_gamma_deotour2}
\end{figure}
The Elimination Theorem (Theorem \ref{thm:elimination}) says that
\begin{align*}
G_{\mathscr{N}} \cap k[\bold{X}] &= \big\{ X_2 - X_1, X_3 - X_2, X_6 - X_3, X_7 - X_6,\\
&\qquad X_{10} - X_7, X_{11} - X_{10}, X_{12} - X_{11} \big\}
\end{align*}
is a Gröbner basis for $I_\pi \cap k[\bold{X}]$. Note that eliminating the cut in the canonical detour leads to the proof net $\pi'$, the main part of which is shown in Figure \ref{figure:cut_reduction_detour}. By construction $T_\gamma: P_{\pi'} \lto P_{\pi}$ is the inclusion of
\[
P_{\pi'} = k[\bold{X}] = k[X_1, X_2, X_3, X_6, X_7, X_{10}, X_{11}, X_{12}]
\]
into $P_\pi$ (note that $X_1, X_{12}$ are not shown in the Figure for compactness). %The Buchberger with Early Stopping algorithm mirrors the reduction $\gamma$ in the following precise sense: $G_{\mathscr{N}} \cap P_{\pi'}$ is the generating set $G^{(0)}_{\pi'}$ associated by Definition \ref{defn:sequence_gpi0} to the proof $\pi'$, based purely on the order that unoriented atoms appear in the (single) persistent path.
\end{example}

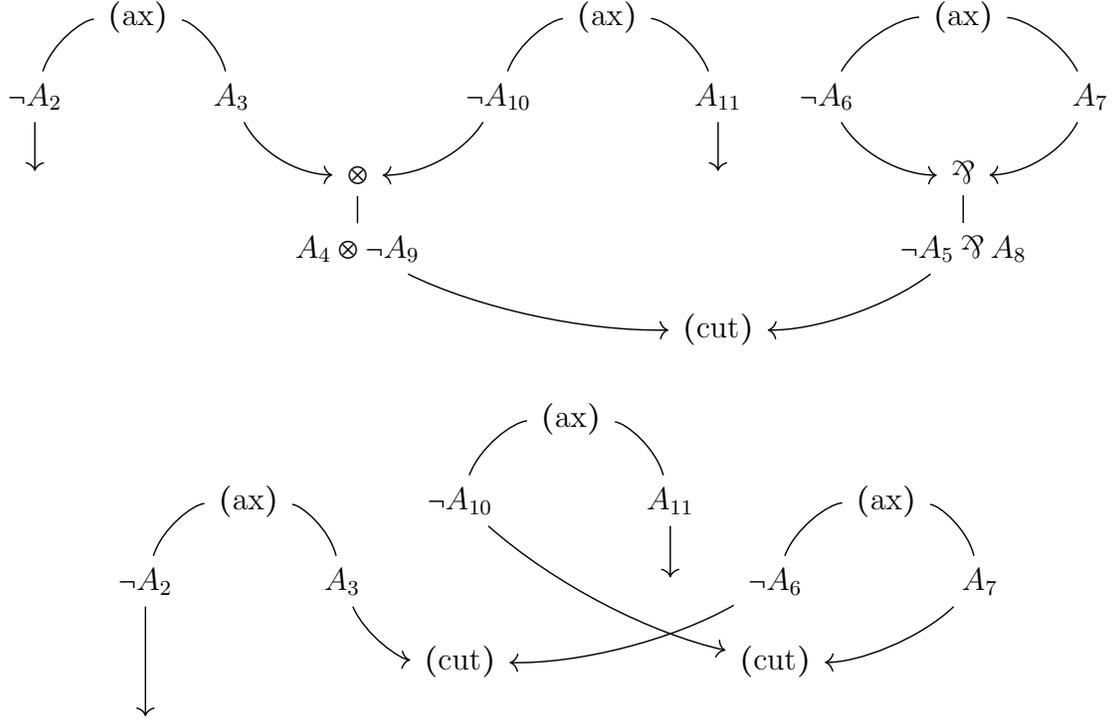
\begin{figure}
\begin{center}
	\[\begin{tikzcd}[column sep = tiny, row sep = small]
		& \ax &&&& \ax &&& \ax \\
		{\neg A_2} && A_3 && \neg A_{10} && A_{11} & \neg A_6 && A_7 \\
		\, &&& \otimes &&& \, && \parr \\
		&&& A_4 \otimes \neg A_9 &&&&& \neg A_5 \parr A_8 \\
		&&&&&& \cut
		\arrow[curve={height=12pt}, no head, from=1-2, to=2-1]
		\arrow[curve={height=-12pt}, no head, from=1-2, to=2-3]
		\arrow[from=2-1, to=3-1]
		\arrow[curve={height=12pt}, no head, from=1-6, to=2-5]
		\arrow[curve={height=-12pt}, no head, from=1-6, to=2-7]
		\arrow[from=2-7, to=3-7]
		\arrow[curve={height=-12pt}, from=2-5, to=3-4]
		\arrow[curve={height=12pt}, from=2-3, to=3-4]
		\arrow[curve={height=12pt}, no head, from=1-9, to=2-8]
		\arrow[curve={height=-12pt}, no head, from=1-9, to=2-10]
		\arrow[curve={height=-12pt}, from=2-10, to=3-9]
		\arrow[curve={height=12pt}, from=2-8, to=3-9]
		\arrow[no head, from=3-9, to=4-9]
		\arrow[curve={height=-12pt}, from=4-9, to=5-7]
		\arrow[curve={height=12pt}, from=4-4, to=5-7]
		\arrow[no head, from=3-4, to=4-4]
	\end{tikzcd}\]
\[\begin{tikzcd}[column sep = tiny, row sep = small]
	&&&&& \ax \\
	& \ax &&& \neg A_{10} && A_{11} && \ax \\
	{\neg A_2} && A_3 &&&& \, & {\neg A_6} && A_7 \\
	&&&& \cut &&& \cut \\
	\,
	\arrow[curve={height=12pt}, no head, from=2-2, to=3-1]
	\arrow[curve={height=-12pt}, no head, from=2-2, to=3-3]
	\arrow[from=3-1, to=5-1]
	\arrow[curve={height=12pt}, no head, from=1-6, to=2-5]
	\arrow[curve={height=-12pt}, no head, from=1-6, to=2-7]
	\arrow[from=2-7, to=3-7]
	\arrow[curve={height=12pt}, no head, from=2-9, to=3-8]
	\arrow[curve={height=-12pt}, no head, from=2-9, to=3-10]
	\arrow[curve={height=12pt}, from=3-3, to=4-5]
	\arrow[curve={height=-12pt}, from=3-8, to=4-5]
	\arrow[curve={height=12pt}, from=2-5, to=4-8]
	\arrow[curve={height=-12pt}, from=3-10, to=4-8]
\end{tikzcd}\]
\end{center}
\caption{Cut reduction on the canonical detour.}
\label{figure:cut_reduction_detour}
\end{figure}

\subsection{Normal Buchberger and Normal Forms}

In Theorem \ref{thm:elimination_ours} we made a connection between our modification to the Buchberger algorithm (Algorithm \ref{alg:elimination}) and single step cut reduction. We now examine the ``standard'' form of Buchberger's Algorithm \cite[\S 2.10 Theorem 9]{Grobner} and show that it relates to \emph{normalisation}, that is, a maximal sequence of cut reductions yielding a normal form. In this section $\pi$ denotes a proof net with single conclusion $A$. %Recall from Definition \ref{defn:reduction_sequence} the notion of a reduction sequence.

\begin{lemma} If $\Gamma, \Omega: \pi \lto \pi'$ are reduction sequences between proof nets with $\pi'$ cut-free then $T_\Gamma = T_\Omega$.
	\end{lemma}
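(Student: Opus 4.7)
The plan is to proceed by strong induction on the weight $|\pi|$ (as defined in the proof of Proposition~\ref{prop:permutation}), which strictly decreases under any reduction, together with a local diamond property for cut-elimination in MLL compatible with the $T$-maps.

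First I would establish the diamond lemma: for any two distinct reductions $\gamma: \pi \lto \pi_1$ and $\omega: \pi \lto \pi_2$, there exist reduction sequences $\Omega_0: \pi_1 \lto \pi_{12}$ and $\Gamma_0: \pi_2 \lto \pi_{12}$ (possibly empty) converging to a common proof net such that $T_\gamma T_{\Omega_0} = T_\omega T_{\Gamma_0}$. The net-level commutativity is standard in MLL, and the $T$-map compatibility follows from Definition~\ref{def:morphisms}, where $T_\gamma$ acts as the identity on atoms not incident to the edges of the redex; so the effect of composing two $T$-maps can be analysed in a small neighbourhood of the two redexes. When $\gamma$ and $\omega$ involve disjoint sets of links the commutativity of $T$-maps is immediate, and the remaining overlapping configurations (e.g.\ two $a$-redexes sharing an axiom, or an $a$-redex adjacent to an $m$-redex sharing a cut edge) reduce to a short case analysis against Figure~\ref{figure:tgamma}.

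For the inductive step, write $\Gamma = \gamma \cdot \Gamma'$ and $\Omega = \omega \cdot \Omega'$, adopting the convention $T_\emptyset = \mathrm{id}$. If $\gamma = \omega$, then $\pi_1 = \pi_2$ has strictly smaller weight, and the induction hypothesis applied to $\Gamma', \Omega': \pi_1 \lto \pi'$ yields $T_{\Gamma'} = T_{\Omega'}$ and hence $T_\Gamma = T_\Omega$. If $\gamma \neq \omega$, invoke the diamond lemma to produce $\pi_{12}, \Omega_0, \Gamma_0$ and pick any reduction sequence $\Xi: \pi_{12} \lto \pi'$ (which exists by strong normalisation for MLL and the fact that $\pi'$ is the unique cut-free form). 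Applying the induction hypothesis at $\pi_1$ (to the two sequences $\Gamma'$ and $\Omega_0 \cdot \Xi$) and at $\pi_2$ yields $T_{\Gamma'} = T_{\Omega_0} T_\Xi$ and $T_{\Omega'} = T_{\Gamma_0} T_\Xi$. Combining with the diamond equality gives
\[
T_\Gamma = T_\gamma T_{\Gamma'} = T_\gamma T_{\Omega_0} T_\Xi = T_\omega T_{\Gamma_0} T_\Xi = T_\omega T_{\Omega'} = T_\Omega\,.
\]

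The main obstacle will be the diamond lemma with $T$-map compatibility: while commutation of redex pairs in MLL is essentially folklore, verifying the agreement of $T$-maps in every overlap configuration requires a careful case analysis based on Figure~\ref{figure:tgamma}, tracking exactly which atoms on which upper edges of the two redexes are hit by the composed maps. A more conceptual alternative, which I would try as a parallel approach, is to characterise $T_\Gamma(U)$ for $U \in U_{\pi'}$ directly as the unique atom of $U_\pi$ obtained by tracing the edge carrying $U$ backward through the reduction sequence, and to argue that this trace depends only on the (unique) correspondence between edges of $\pi'$ and $\pi$ cut out by the common cut-free normal form, not on the order of reductions used to exhibit it.
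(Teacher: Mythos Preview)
Your proposal is correct and follows essentially the same approach as the paper: both reduce the global statement to the local claim that $T$-maps commute on a single confluence square, and both defer that local check to inspection of the schematic in Figure~\ref{figure:tgamma}. The paper is terser, simply citing confluence and equal-length of reduction paths and asserting that it then suffices to verify commutativity of $T$ on one diamond, whereas you spell out the induction on weight explicitly; but the content is the same.
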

\begin{proof}
Let the reduction sequences be
		% https://q.uiver.app/?q=WzAsOCxbMCwxLCJcXHBpXzEiXSxbMSwwLCJcXHBpXzIiXSxbMywwLCJcXHBpX3tuLTF9Il0sWzQsMSwiXFxwaV9uIl0sWzEsMiwiXFx6ZXRhXzIiXSxbMywyLCJcXHpldGFfe24tMX0iXSxbMiwyLCJcXGxkb3RzIl0sWzIsMCwiXFxsZG90cyJdLFs1LDNdLFswLDRdLFsxLDddLFs3LDJdLFsyLDNdLFs0LDZdLFs2LDVdLFswLDFdXQ==
		\[\begin{tikzcd}
			& {\pi_1} & \cdots & {\pi_{n-1}} \\
			{\pi} &&&& {\pi'} \\
			& {\zeta_1} & \cdots & {\zeta_{n-1}}
			\arrow[from=3-4, to=2-5]
			\arrow[from=2-1, to=3-2]
			\arrow[from=1-2, to=1-3]
			\arrow[from=1-3, to=1-4]
			\arrow[from=1-4, to=2-5]
			\arrow[from=3-2, to=3-3]
			\arrow[from=3-3, to=3-4]
			\arrow[from=2-1, to=1-2]
		\end{tikzcd}\]
	with $\Gamma$ on top and $\Omega$ on the bottom. Recall that any two reduction paths have the same length \cite[Corollary 3.0.7]{Troiani}. Cut reduction is confluent \cite[Proposition 3.0.3]{Troiani}, so it suffices to show that any commutative diagram of reductions
	\begin{equation}
		\begin{tikzcd}
			\zeta_1\arrow[r,"{\gamma_1}"]\arrow[d,swap,"{\gamma_2}"] & \zeta_2\arrow[d,"{\gamma_3}"]\\
			\zeta_3\arrow[r,swap,"{\gamma_4}"] & \zeta_4
			\end{tikzcd}
		\end{equation}
	induces a commutative diagram
	\begin{equation}\label{eq:induced_confluence}
		\begin{tikzcd}[column sep = large, row sep = large]
			P_{\zeta_1} & P_{\zeta_2}\arrow[l,swap,"{T_{\gamma_1}}"]\\
			P_{\zeta_3}\arrow[u,"{T_{\gamma_2}}"] & P_{\zeta_4}\arrow[u,swap,"{T_{\gamma_3}}"]\arrow[l,"{T_{\gamma_4}}"]
			\end{tikzcd}
		\end{equation}
	For any reduction $\gamma: \pi \lto \pi'$, the schematics used to define $T_\gamma$ indicate a mapping from the edges of $\pi$ to the edges of $\pi'$. Commutativity of \eqref{eq:induced_confluence} amounts to confluence of this induced map. This follows from inspection of Figure \ref{figure:tgamma}.
	\end{proof}
	
Let $\Gamma$ denote any reduction path from $\pi$ to its normal form $\widetilde{\pi}$ (the unique cut-free multiplicative proof net equivalent under cut reduction to $\pi$). By the previous lemma the $k$-algebra morphism
\[
T = T_\Gamma: T_{\widetilde{\pi}} \lto T_\pi
\]
is independent of the choice of $\Gamma$ and in this section we simply denote it by $T$. There is a simple characterisation of the variables of $P_\pi$ in the image of $T$. Recall that a proof net is an oriented multigraph, and a \emph{directed path} in $\pi$ means a sequence of edges traversed in the same direction as the orientation.

\begin{lemma}\label{lem:elimination_characterisation}
An unoriented atom $U$ of $\pi$ is in the image of $T$ if and only if it is \emph{above a conclusion} in the sense that the unique maximal directed path in $\pi$ starting at the edge associated to $U$ ends at a conclusion.
	\end{lemma}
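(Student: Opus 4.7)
The plan is to induct on the length of a reduction sequence $\Gamma: \pi \lto \widetilde{\pi}$, which is legitimate because the preceding lemma ensures $T = T_\Gamma$ is independent of the choice of $\Gamma$. The base case is $\pi$ already cut-free, in which case $\widetilde{\pi} = \pi$ and $T$ is the identity: every atom lies in the image, and every maximal directed path terminates at a conclusion since there are no cut links.

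For the inductive step, factor $\Gamma$ as a single reduction $\gamma: \pi \lto \pi_1$ followed by a shorter sequence $\Gamma': \pi_1 \lto \widetilde{\pi}$, so $T = T_\gamma \circ T_{\Gamma'}$. By the inductive hypothesis, an atom $U'$ of $\pi_1$ lies in $\operatorname{im}(T_{\Gamma'})$ if and only if $U'$ is above a conclusion in $\pi_1$. Combined with the description of $\operatorname{im}(T_\gamma)$ from Definition \ref{def:morphisms} and Figure \ref{figure:tgamma}, the problem reduces to a single-step claim: for every atom $U \in U_\pi$, the atom $U$ is above a conclusion in $\pi$ if and only if $U \in \operatorname{im}(T_\gamma)$ with $T_\gamma^{-1}(U)$ above a conclusion in $\pi_1$. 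The verification splits by the type of $\gamma$ (either an $a$-redex or an $m$-redex) and by where the edge carrying $U$ sits relative to the redex subgraph --- outside the redex, on a surviving input edge identified to an edge of $\pi_1$ by a blue arrow of Figure \ref{figure:tgamma}, or on an edge eliminated by $\gamma$.

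The main obstacle is comparing maximal directed paths in $\pi$ and $\pi_1$ across the reduction site. The atoms excluded from the image by $\gamma$ are, by inspection of Figure \ref{figure:tgamma}, exactly those on edges whose maximal directed path in $\pi$ terminates at the cut link being removed, which matches the negation of ``above a conclusion'' in $\pi$. For the atoms that survive $\gamma$, the blue arrows in the schematic for $T_\gamma$ are drawn so that $T_\gamma^{-1}(U)$ lies on the edge of $\pi_1$ whose downstream directed structure continues the downstream structure of $U$ past the reduction site, bridging the eliminated cut via the paired axiom (for an $a$-redex) or via the two new cut links (for an $m$-redex). With this alignment the property ``ends at a conclusion'' transfers faithfully between $\pi_1$ and $\pi$ in both directions, and checking the finitely many boundary cases completes the induction.
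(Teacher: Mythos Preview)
Your induction on the length of a reduction sequence is a clean way to organise the argument, and it is close in spirit to the paper's proof, which treats the two implications separately but also ultimately works one reduction step at a time (using induction on the length of the downward path for the direction ``above a cut $\Rightarrow$ not in the image''). Unfortunately the two arguments share the same gap at the single-step level, in the $a$-redex case.

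Your assertion that ``the atoms excluded from the image by $\gamma$ are exactly those on edges whose maximal directed path in $\pi$ terminates at the cut link being removed'' is not correct. In an $a$-redex, suppose the cut-premise \emph{not} coming from the axiom is the conclusion of a $\otimes$ or $\parr$ link $l$. Any atom $U$ on a premise of $l$ lies outside the redex subgraph, so $T_\gamma$ is the identity on it and $U$ survives; yet the maximal directed path from $U$ in $\pi$ passes through $l$ and terminates at the removed cut. Worse, in $\pi_1$ that path now continues through the merged edge to whatever lay below the axiom's \emph{other} conclusion, which may be a conclusion of the proof net, so $T_\gamma^{-1}(U)$ can be above a conclusion in $\pi_1$ while $U$ is above a cut in $\pi$: your single-step biconditional fails outright. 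A concrete instance: cut the tensor-introduction proof of $\vdash \neg X, \neg Y, X\otimes Y$ against an axiom on $X\otimes Y$; the atom on the $X$-premise of the tensor is above a cut in $\pi$ but survives to the cut-free normal form. The paper's inductive step (``reducing this cut link results in a proof net where $A$ is still above a cut link'') breaks on the same example. Both arguments go through if one restricts to proof nets whose axiom links are on atomic formulas, since then in any $a$-redex the cut formula is atomic and the non-axiom premise of the cut must also issue from an axiom, leaving nothing strictly upstream of the redex on that side.
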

\begin{proof}
Let $\rho$ be the maximal directed path whose first edge is the one associated to $U$. First we prove that if $\rho$ ends at a conclusion, then every unoriented atom of every formula labelling an edge in $\rho$ is in the image of $T$.
	
	Write $\rho = (e_1,\ldots, e_n)$ and notice first that none of the edges $e_2,\ldots, e_n$ can appear in any redex. So we only need to consider $e_1$, which is labelled by $A$. If $e_1$ is part of a redex, it is necessarily part of an $a$-redex. Inspection of \eqref{eq:a_redex} shows that reducing this $a$-redex results in a proof with the edge labelled $A$ still present. This resulting edge now either may be part of an $a$-redex or not. Reducing any other $a$-redex not involving the edge labelled $A$ does not remove this edge labelled $A$. Reducing any $m$-redex also does not remove this edge labelled $A$. It follows that the edge labelled $A$ appears in the normal form $\pi'$. That is, $U$ is in the image of $T$.
	
	Now we prove that if $\rho$ ends at a cut link then none of the unoriented atoms of $A$ are in the image of $T$. We proceed by induction on the length $n$ of $\rho$. Say $n = 1$, if the edge labelled $A$ is premise to a cut link, then it is part of an $a$-redex which when reduced, removes the edge labelled $A$. Thus $U$ cannot be in the image of $T$.
	
	Now assume that $n > 1$ and the result has been proved in the setting where $\rho$ is of length $n-1$. If $A$ is above a cut link, then reducing this cut link results in a proof net where $A$ is still above a cut link, the result follows directly from the inductive hypothesis.
	\end{proof}
	
Note that an unoriented atom of $\pi$ which is not above a conclusion must be \emph{above a cut} in the sense that the unique maximal directed path in $\pi$ starting at the edge associated to the atom ends at a cut link. These are the variables eliminated by $\Gamma$, or what is the same, the variables \emph{not} in the image of $T$.

\begin{lemma}\label{lem:r1_r2}
	Let $\scr{P} = (Z_1,\ldots, Z_r)$ be a persistent path in $\pi$ which traverses a cut link. Then there exists a (necessarily unique) pair of indices $1 < t_1, t_2 < r$ such that $Z_i$ is above a cut if and only if $t_1 \le i \le t_2$.
	\end{lemma}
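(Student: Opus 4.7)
The plan is to exhibit the zigzag structure of any persistent path in $\pi$ and then read the above-cut region off it. First I would observe that two consecutive atoms $Z_i, Z_{i+1}$ in $\scr{P}$ are related through a unique link $l$, and the type of $l$ determines whether the path continues in the same direction through the directed graph of $\pi$ or reverses. At an $\otimes$ or $\parr$ link the $\sim$-relation only pairs an atom on a premise edge with the corresponding atom on the conclusion (there is no $\sim$-edge between the two premise atoms), so the step moves between edges of opposite orientation at $l$ and the direction of traversal (upstream or downstream) is preserved. At an $\ax$ link both atoms lie on outgoing edges, giving a ``peak'' where the direction flips from upstream to downstream. At a $\cut$ link both atoms lie on incoming edges, giving a ``valley'' where the direction flips from downstream to upstream.

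Because $Z_1$ lies on the conclusion edge, whose only non-$\operatorname{c}$ endpoint is its source link, the first step of $\scr{P}$ is necessarily upstream; dually the last step is downstream. Hence $\scr{P}$ decomposes uniquely into maximal monotone segments arranged in the pattern
\[
\mathrm{up} \to \ax \to \mathrm{down} \to \cut \to \mathrm{up} \to \cdots \to \ax \to \mathrm{down}
\]
with some number $k \ge 1$ of $\ax$-peaks and $k-1$ $\cut$-valleys. The hypothesis that $\scr{P}$ traverses a cut is equivalent to $k \ge 2$, in which case intermediate segments exist between the initial upstream segment and the final downstream segment.

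Next I would verify that for any $Z_i$ lying on a monotone segment $s$ with bottom endpoint $v_s$, the maximal directed path starting at the edge of $Z_i$ runs down $s$ and terminates at $v_s$; this uses the fact that each $\otimes$ or $\parr$ link has a unique outgoing edge, so the downstream path from any edge along $s$ simply follows $s$ to $v_s$. Consequently $Z_i$ is above a cut precisely when $v_s$ is a $\cut$ link, i.e.\ exactly when $s$ is one of the intermediate segments (whose bottom is a $\cut$-valley) rather than the initial or final segment (whose bottom is the conclusion vertex). The above-cut indices therefore form the contiguous block occupied by the intermediate segments; letting $t_1, t_2$ be its first and last indices, $Z_{t_1-1}$ and $Z_{t_1}$ are the pair of atoms at the first $\ax$-peak while $Z_{t_2}, Z_{t_2+1}$ are the pair at the last. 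The bounds $1 < t_1 \le t_2 < r$ are immediate since $Z_1$ and $Z_r$ lie in the initial and final segments, and uniqueness of $(t_1,t_2)$ follows from contiguity. The main obstacle is the case analysis justifying the zigzag structure, particularly the non-existence of a $\sim$-edge between the two premise atoms at an $\otimes$ or $\parr$ link, which is what prevents $\scr{P}$ from turning direction at such a link.
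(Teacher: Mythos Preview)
Your argument is correct. The zigzag decomposition into maximal monotone segments, with direction reversals occurring exactly at $\ax$-peaks and $\cut$-valleys, is sound: the crucial observation that the $\sim$-relation at an $\otimes$ or $\parr$ link never connects the two premise atoms (only premise with conclusion) is exactly what forces direction to be preserved there, and you identify this correctly. From the decomposition the contiguity of the above-cut block is immediate.

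The paper organises the argument differently. Rather than exhibiting the global zigzag shape, it argues locally: for any $Z$ above a conclusion, the downstream directed path from the edge of $Z$ reaches the conclusion vertex, and hence one of the two tails $(Z_1,\ldots,Z)$ or $(Z,\ldots,Z_r)$ of the persistent path consists entirely of atoms above a conclusion. From this it extracts $t_1$ and $t_2$ as the extremal indices of the ``above a conclusion'' prefix and suffix. The step from ``the directed path from $Z$ hits the conclusion'' to ``one tail of $\scr{P}$ is entirely above a conclusion'' is asserted rather than argued; what justifies it is precisely the fact that a downstream run of the persistent path through $\otimes/\parr$ links tracks the directed path edge-for-edge until hitting $\operatorname{c}$ --- which is a consequence of your zigzag analysis. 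So your route is more explicit and in fact supplies the structural fact the paper's ``Thus'' leans on. The paper's version is shorter; yours gives a reusable description of the shape of $\scr{P}$.
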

\begin{proof}
	Let $Z$ be a variable above a conclusion. By Lemma \ref{lem:elimination_characterisation} every unoriented atom of every formula labelling an edge in the directed path starting at the edge of $Z$ and ending at a conclusion is also above a conclusion. Thus, either $(Z_1,\ldots, Z)$ or $(Z, \ldots, Z_r)$ is a subsequence of $\scr{P}$ consisting only of variables above a conclusion. Thus, $t_1$ can be taken to be the maximal integer such that $(Z_1,\ldots, Z_{r_1-1})$ is a subsequence of $\scr{P}$ consisting only of variables above a conclusion, and $t_2$ can be taken to be the maximal integer such that $(Z_{t_2+1},\ldots, Z_r)$ is a subsequence of $\scr{P}$ consisting only of variables above a conclusion.
	\end{proof}

Let $\scr{P}_1,\ldots, \scr{P}_m$ denote the persistent paths of $\pi$, ordered in the way described at the beginning of Section \ref{sec:monomial_order}. Let $\mathscr{Q}_i$ denote the subsequence of $\mathscr{P}_i$ consisting of those unoriented atoms in the normalisation $\widetilde{\pi}$, that is, those atoms in the image of $T$. In light of Lemma \ref{lem:r1_r2} we can divide the sequence $\mathscr{P}_i$ into three parts:
\begin{equation}
\mathscr{P}_i = \mathscr{Q}_i^1, \mathscr{P}_i \setminus \mathscr{Q}_i, \mathscr{Q}_i^2
\end{equation}
where $\mathscr{Q}_i^1$ is the subsequence of $\mathscr{P}_i$ consisting of those unoriented atoms in the persistent path coming \emph{before} all the atoms above a cut, and $\mathscr{Q}_i^2$ is the subsequence of atoms coming \emph{after} all those above a cut. By definition $\mathscr{Q}_i = \mathscr{Q}_i^1 \sqcup \mathscr{Q}_i^2$.

Consider the following permutation of $\mathscr{P}_i$
\begin{equation}\label{eq:normal_order_part}
\widetilde{\mathscr{P}}_i = (\mathscr{Q}_i^1)^{\textrm{op}}, \mathscr{Q}_i^2, (\mathscr{P}_i \setminus \mathscr{Q}_i)^{\textrm{op}}
\end{equation}
where $\mathscr{R}^{\textrm{op}}$ means the reversal of the sequence $\mathscr{R}$. Combining these re-ordered persistent paths we obtain a total order on the set of unoriented atoms of $\pi$:
\begin{equation}\label{eq:normal_order}
\widetilde{\scr{P}}_1, \ldots, \widetilde{\scr{P}}_m\,.
\end{equation}

%Let $q$ be an index such that $\scr{P}_1,\ldots,\scr{P}_q$ do \emph{not} traverse cut links, and $\scr{P}_{q+1},\ldots, \scr{P}_{m}$ do. For each $i \geq q$, let $(t_1^i,t_2^i)$ be the pair of integers which separate the elimination and persistent variables, whose existence is guaranteed by Lemma \ref{lem:r1_r2}. Then, $\scr{P}_i$ can be written as the concatenation of sequences
%\begin{equation}
%	\scr{P}_i = \scr{R}_i^1, \scr{Q}_i, \scr{R}_i^2
%	\end{equation}
%where the length of $\scr{R}_i^1$ is $t_1^i - 1$. If $\scr{D}$ denotes any of these sequences, then $\scr{D}^{-1}$ denotes the sequence $\scr{D}$ written backwards. The sequence
%\begin{equation}\label{eq:normal_order}
%	\scr{P}_1,\ldots, \scr{P}_q, (\scr{R}_{q+1}^1)^{-1}, \ldots, (\scr{R}_m^1)^{-1}, \scr{R}_{q+1}^2, \ldots, \scr{R}_m^2, (\scr{Q}_{q+1})^{-1},\ldots, (\scr{Q}_m)^{-1}
%	\end{equation}
%is the set of unoriented atoms of $\pi$ arranged in an order which does \emph{not} depend on any reduction order, and in fact is completely determined by $\pi$.

%Definitions \ref{def:normal_order} and \ref{def:callGpi} are completely analogous to Definitions \ref{defn:monomial_order} and \ref{defn:sequence_gpi} respectively. On the other hand, Definition \ref{def:defect} and Lemmas \ref{lem:unique_defect}, \ref{lem:defect_properties} exhibit subtle differences to the similar looking Definition \ref{defn:gamma_defect}, and Lemma \ref{lem:gf_properties}, \ref{lemma:defects_are_lts}.
\begin{defn}\label{def:normal_order}
	We write $U <_n V$ if $U$ is before $V$ in \eqref{eq:normal_order}, reading from left to right. The monomial order $<_n$ on $P_\pi$ is the lexicographic order determined by $<_n$ on the variables.% \emph{normal order on $P_\pi$} is the lexicographic order on the monomials of $P_\pi$ determined by the order $<_n$ on the variables.
	\end{defn}

\begin{defn} Let $\mathscr{S}_n$ be the oriented graph with vertex set $U_\pi$, where two variables $U,V \in U_\pi$ are connected by an edge $e: U \lto V$ if $U \sim V$ and $U <_n V$.
\end{defn}

\begin{defn}\label{def:callGpi} Let $G^{(n)}_\pi$ be the ordered set of polynomials $G_{\mathscr{S}_n}$ of Definition \ref{defn:gen_set_allowed},
\begin{equation}
G^{(n)}_\pi = \big\{ V - U \l e: U \lto V \text{ is an edge in } \mathscr{S}_n \big\}\,.
\end{equation}
\end{defn}

\begin{example}\label{example:n_canonical_detour} Let $\Gamma$ be the normalisation of the canonical detour $\pi$ in Example \ref{example:canonical_detour}. Then $X_1, X_2, X_{11}, X_{12}$ are above a conclusion and the rest are above a cut, so $\mathscr{P}$ is
\[
X_1, X_2, \underline{X_3, X_4, X_5, X_6, X_7, X_8, X_9, X_{10}}, X_{11}, X_{12}
\]
where we underline the atoms in $\mathscr{P} \setminus \mathscr{Q}$. Hence $\mathscr{Q}^1$ is $X_1,X_2$, $\mathscr{Q}^2$ is $X_{11}, X_{12}$ and \eqref{eq:normal_order_part} is
\[
(\mathscr{Q}^1)^{\textrm{op}}, \mathscr{Q}^2, (\mathscr{P} \setminus \mathscr{Q})^{\textrm{op}} = X_2, X_1, X_{11}, X_{12}, \underline{X_{10}, X_9, X_8, X_7, X_6, X_5, X_4, X_3}\,.
\]
The graph $\mathscr{S}_n$ is shown in Figure \ref{figure:s_n_deotour}
\end{example}

%\begin{defn}\label{def:callGpi}
%	The underlying set of the sequence $\call{G}_\pi$ is
%	\begin{equation}\label{eq:underlying_callG}
%		\{(-1)^{\delta(U <_n V)}(U - V) \mid U - V \in G_\pi\}
%		\end{equation}
%	A total order $>_n$ is defined on this set as follows: if $f = U - V$ and $g = X- Y$ then
%	\begin{itemize}
%		\item if $U \neq X$ then $f >_ng$ if and only if $U  >_n X$.
%		\item if $U = X$ then $f >_n g$ if and only if $V >_n Y$.
%		\end{itemize}
%	As a sequence $\call{G}_{\pi}$ is the set \eqref{eq:underlying_callG}, say $\{f_1, \ldots, f_s\}$, in \emph{descending }order according to $<_n$
%	\begin{equation}
%		f_1 >_n ... >_n f_s
%		\end{equation}
%	\end{defn}

\begin{figure}[h]
\begin{center}
\begin{tikzcd}[row sep = tiny, column sep = small]
     &     & X_3\\
     &     &     & X_4\\
     &     &     &     & X_5\\
     &     &     &     &     & X_6  \\
     &     &     &     &     &     & X_7                                 \\
     &     &     &     &     &     &     & X_8                          \\
     &     &     &     &     &     &     &     & X_9                     \\
     &     &     &     &     &     &     &     & & X_{10}                \\
     &     &     &     &     &     &     &     & & & & X_{12}                \\
          &     &     &     &     &     &     &     & & & X_{11}                \\
X_1\\
     & X_2
	\arrow[to=6-8,from=7-9]
	\arrow[to=2-4,from=3-5]
	\arrow[from=10-11,to=9-12]
	\arrow[to=8-10,from=10-11]
	\arrow[from=8-10,to=7-9]
	\arrow[to=5-7,from=6-8]
	\arrow[to=4-6,from=5-7]
	\arrow[from=4-6,to=3-5]
	\arrow[to=1-3,from=2-4]
	\arrow[from=12-2,to=1-3]
	\arrow[from=12-2,to=11-1]
\end{tikzcd}
\end{center}
\caption{The graph $\mathscr{S}_n$ for the canonical detour of Example \ref{example:n_canonical_detour}.}
\label{figure:s_n_deotour}
\end{figure}

We identify $P_{\widetilde{\pi}}$ as a subring of $P_\pi$ using $T$.

\begin{thm}\label{thm:execution}
	Let $\pi$ be a proof net and $\widetilde{\pi}$ the normal form. Let $\mathbb{B}(G^{(n)}_\pi, <_n)$ denote the result of applying the Buchberger algorithm. Then there is an equality of sets
	\begin{equation}\label{eq:execution}
		G^{(n)}_{\widetilde{\pi}} = \mathbb{B}(G^{(n)}_\pi, <_n) \cap P_{\widetilde{\pi}}\,.
		\end{equation}
	\end{thm}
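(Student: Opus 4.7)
The plan is to mirror the proof of Theorem \ref{thm:elimination_ours}, but with standard Buchberger in place of Early Stopping. The first step is to show that on the input $G^{(n)}_\pi$ the two variants agree, i.e.\ $\mathbb{B}(G^{(n)}_\pi, <_n) = \mathbb{B}_{es}(G^{(n)}_\pi, <_n)$. Every generator in $G^{(n)}_\pi$ is a difference of two variables; every $S$-polynomial that survives the LCM criterion is again a difference of two variables; and every intermediate dividend produced during Euclidean division stays in this class. Hence standard and early-stopping Euclidean division return the same remainder at every step, and by Proposition \ref{prop:buchberger_equals_fallingroof} the common output equals $G_{\mathscr{N}}$, where $\mathscr{N}$ is the result of running the Falling Roofs algorithm on the linear graph $\mathscr{S}_n$.

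The next step is to analyse $\mathscr{N}$ one persistent path at a time, using that persistent paths are $\sim$-disjoint so Falling Roofs factorises across them. Fix a cut-traversing persistent path $\mathscr{P} = (Z_1,\ldots,Z_r)$ with cut-region indices $t_1 \le i \le t_2$ supplied by Lemma \ref{lem:r1_r2}. The definition of $<_n$ via $\widetilde{\mathscr{P}} = (\mathscr{Q}^1)^{\textrm{op}},\mathscr{Q}^2,(\mathscr{P}\setminus\mathscr{Q})^{\textrm{op}}$ makes the cut-region variables the largest in the block, and a direct inspection of the resulting subgraph of $\mathscr{S}_n$ shows that it has exactly one initial roof, at the tip $Z_{t_1}$. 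Running Falling Roofs from this roof triggers a chain of successive roofs descending through the reversed cut segment; the chain terminates when the advancing edge meets the second boundary edge $Z_{t_2+1} \to Z_{t_2}$, leaving one new live bridging edge $Z_{t_1-1} \to Z_{t_2+1}$ between $\mathscr{Q}^1$ and $\mathscr{Q}^2$, while every middle-segment and both boundary edges become dead. For a persistent path that does not traverse a cut the corresponding portion of $\mathscr{S}_n$ is roof-free and Falling Roofs does nothing.

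The third step is to identify the live-edge subgraph $\mathscr{N}^+$ restricted to $P_{\widetilde{\pi}}$ with $\mathscr{S}_n$ for $\widetilde{\pi}$. By Remark \ref{rmk:persistency} the persistent path of $\widetilde{\pi}$ associated to $\mathscr{P}$ is $(Z_1,\ldots,Z_{t_1-1},Z_{t_2+1},\ldots,Z_r)$, and with the convention that a persistent path not traversing a cut has $\mathscr{Q} = \mathscr{Q}^1 \sqcup \mathscr{Q}^2$ split at the axiom on the path, the edges of $\mathscr{S}_n$ in $\widetilde{\pi}$ comprise exactly the surviving first-segment edges of $\mathscr{P}$, the bridging edge $Z_{t_1-1} \to Z_{t_2+1}$, and the surviving last-segment edges. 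Hence as sets of polynomials $G_{\mathscr{N}^+} = G_{\mathscr{N}} \cap P_{\widetilde{\pi}} = G^{(n)}_{\widetilde{\pi}}$. To confirm that the intersection $\mathbb{B}(G^{(n)}_\pi,<_n) \cap P_{\widetilde{\pi}}$ really equals $G_{\mathscr{N}^+}$ I would invoke Proposition \ref{prop:cutting_is_living} on a per-persistent-path basis: within each block $\widetilde{\mathscr{P}}_i$ the kept variables are smaller than the eliminated ones in $<_n$, which is the hypothesis of the proposition, and since persistent paths are $\sim$-independent the ideal and the generating sets decompose accordingly.

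The main obstacle I anticipate is the careful case analysis of the Falling Roofs chain inside each cut region: one must verify that no new roof arises outside the cut region at any intermediate step of the chain, and that the chain always propagates through the entire reversed cut segment and terminates precisely at the second boundary edge, regardless of the lengths of $\mathscr{Q}^1$ and $\mathscr{Q}^2$. A secondary subtlety is handling the persistent paths that do not traverse a cut (including all paths of $\widetilde{\pi}$), where the indices $t_1, t_2$ of Lemma \ref{lem:r1_r2} are undefined and the decomposition $\mathscr{Q}_i = \mathscr{Q}_i^1 \sqcup \mathscr{Q}_i^2$ must be made by splitting at the axiom on the persistent path rather than at the cut region.
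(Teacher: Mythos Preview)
Your route through the Falling Roofs machinery is different from the paper's, which works directly with $S$-polynomials and an explicit division chain. The paper simply computes: for the unique pair $f_i = Z_{r_1+1}-Z_{r_1+2}$, $f_j = Z_{r_1+1}-Z_{r_1}$ sharing a leading term on each cut-traversing path, it runs standard Euclidean division on $S(f_i,f_j)$ and watches the remainder march down the reversed cut segment until it lands at $Z_{r_2}-Z_{r_1}$, then checks that both $Z_{r_1}$ and $Z_{r_2}$ are non-leading so division stops. Both inclusions of \eqref{eq:execution} are then read off directly. Your graphical version of this march (the chain of roofs in step two) is essentially the same computation in the language of Section~\ref{section:falling_roofs}.

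There are however two genuine gaps in your reduction to the Early-Stopping machinery. First, ``every intermediate dividend is a difference of two variables'' does \emph{not} by itself imply $\ediv{S}{G}=\edives{S}{G}$: if the dividend is $V-U$ with $V$ not a leading term but $U$ a leading term, standard division continues (moving $V$ to the remainder and reducing $-U$) while early stopping halts. What you need is that when the chain terminates at $Z_{t_2+1}-Z_{t_1-1}$, \emph{both} endpoints are local minima of $\mathscr{S}_n$; this is true (each has two outgoing edges and none incoming) but must be said. Second, your appeal to Proposition~\ref{prop:cutting_is_living} fails: hypothesis (ii) there requires the kept variables to be $<$-increasing in their $x$-axis (persistent-path) order, but in $<_n$ the segment $\mathscr{Q}^1$ is \emph{reversed}, so $Z_1 >_n Z_2 >_n \cdots >_n Z_{t_1-1}$. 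The conclusion $G_{\mathscr{N}^+}=G_{\mathscr{N}}\cap P_{\widetilde{\pi}}$ still holds, but you must verify it directly from your step-two trace (every dead edge produced has at least one endpoint in the cut region $\{Z_{t_1},\ldots,Z_{t_2}\}$), not by citing the proposition. The paper sidesteps both issues by never invoking the Early-Stopping variant or Proposition~\ref{prop:cutting_is_living} at all.
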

\begin{proof}
We write $G^{(n)}_\pi = \{ f_1, \ldots, f_s \}$. Buchberger's Algorithm \cite[\S 2.10 Theorem 9]{Grobner} begins by choosing a pair $(i,j)$ with $i < j$ and considering the $S$-polynomial $S(f_i, f_j)$. The only interesting case is when $f_i, f_j$ share a leading term, so we assume this below.
	
The polynomials $f_i, f_j$ must lie on a common persistent path $\scr{P}$ which traverses a cut. We have that $f_i = Z_{r_1+1} - Z_{r_1 + 2}$ and $f_j = Z_{r_1+1} - Z_{r_1}$ for some unoriented atoms in the persistent path $\mathscr{P} = Z_1, \ldots, Z_l$ and
	\begin{align*}
		S(f_i, f_j) &= (Z_{r_1+1} - Z_{r_1 + 2}) - (Z_{r_1+1} - Z_{r_1})\\
		&= Z_{r_1 } - Z_{r_1 + 2}\,.
		\end{align*}
	We now divide by the sequence $(f_1,\ldots, f_s)$.
	\begin{equation}
		\begin{array}{rll}
			(f_1,...,f_s) & \showdiv{Z_{r_1} - Z_{r_1 + 2}}\\
			&\hspace{0.45em} Z_{r_1 + 3} - Z_{r_1 + 2}\\
			\CMidRule[0.0ex][18.0ex]{2-2}
			&\hspace{0.45em} \hphantom{Y' - {}}  Z_{r_1 + 3} - Z_{r_1 }\\
			& \hspace{0.45em} \hphantom{Y' - {}} Z_{r_1 + 3} - Z_{r_1 + 4}\\
			\CMidRule[6.0ex][10.0ex]{2-2}
			& \hspace{0.45em} \hphantom{Y' - Z_{r_1 + 4} - {}} Z_{r_1 + 4} - Z_{r_1}\\
			& \hspace{0.45em} \hphantom{Y' - Z_{r_1 + 4} - Z_{r_1 + 4}} \vdots\\
			\CMidRule[15.0ex][6.0ex]{2-2}
			& \hspace{0.45em} \hphantom{Y' - Z_{r_1 + 4}  + Z_{r_1 + 4}} Z_{r_2-1} - Z_{r_1}\\
			& \hspace{0.45em} \hphantom{Y' - Z_{r_1 + 4}  + Z_{r_1 + 4}} Z_{r_2-1} - Z_{r_2}\\
			\CMidRule[18.0ex][0.0ex]{2-2}
			& \hspace{0.45em} \hphantom{Y' - Z_{r_1 + 4} + Z_{r_1 + 4} + Z_{r_2-1}} Z_{r_2} - Z_{r_1 }  
		\end{array}
	\end{equation}
	The division algorithm necessarily terminates here as there is no polynomial in $G^{(n)}_\pi$ with leading term either $Z_{r_2}$ or $Z_{r_1}$. It remains to show that $Z_{r_2} - Z_{r_1}$ appears in $G^{(n)}_{\pi'}$.
	
	The sequence $(Z_{r_1}, Z_{r_1+1}, \ldots, Z_{r_2-1}, Z_{r_2})$ is a subsequence of $\scr{P}$ and there is an associated persistent path $\scr{P}'$ in $\pi'$ for which $(Z_{r_1}, Z_{r_2})$ is a subsequence. By the way that $<_n$ is defined, we have that $Z_{r_2} - Z_{r_1} \in G^{(n)}_{\pi'}$. This proves the containment $\supseteq$ in \eqref{eq:execution}.
	
	Next we show the reverse inclusion. Let $f \in G^{(n)}_{\pi'}$ be given. If $f \in G^{(n)}_\pi$ then there is nothing to show, as $G^{(n)}_\pi \subseteq \mathbb{B}(G^{(n)}_\pi, <_n)$. So suppose $f \notin G^{(n)}_\pi$. Then $f = X' - X$ for some persistent path in $\pi$ with subsequence $(X, W_1, \ldots, W_t, X')$ where $W_1,\ldots, W_t$ are variables not in $P_{\pi'}$. Let $g = W_1 - W_2$ and $g' = W_1 - X$. Then $g, g' \in G^{(n)}_\pi$ and the first half of this proof shows that division of $S(g,g')$ by $G^{(n)}_{\pi}$ yields $f$, so $f \in \mathbb{B}(G^{(n)}_\pi, <_n)$.
	\end{proof}

\section{Conclusion}

Let us summarise the dictionary between proofs and ideals developed in this paper. We have shown how to associate to a proof net $\pi$ in multiplicative linear logic a triple
\begin{equation}\label{eq:proof_to_ideal}
\pi \longmapsto (P_\pi, G_\pi, <_0)
\end{equation}
consisting of a polynomial ring $P_\pi$, a generating set $G_\pi$ for an ideal $I_\pi$ in this ring, and a monomial order $<_0$ on $P_\pi$. We have shown that $G_\pi$ is a Gröbner basis for $I_\pi$ with respect to this monomial order (Proposition \ref{prop:minimal_grob}). 

There are many other monomial orders on $P_\pi$. With respect to some monomial orders $G_\pi$ will be ``stable'' in the sense that it is a Gröbner basis, while it will be ``unstable'' (not a Gröbner basis) with respect to others and running the Buchberger algorithm makes nontrivial changes. The correspondence in \eqref{eq:proof_to_ideal} maps proof nets to stable triples. However, if $\pi$ contains $\cut$ links then we can destabilise it by using a monomial order $<_\Gamma$ derived from some reduction sequence. The ``restabilisation'' of $(P_\pi, G_\pi, <_\Gamma)$ is the process of running the Buchberger algorithm and intersecting with the subring $P_{\pi'}$ of ``low'' variables with respect to $<_\Gamma$. Our main Theorem says that this process ends with the image of the reduction $\pi'$ under the map \eqref{eq:proof_to_ideal}, or in other words the following diagram commutes:
\begin{equation}\label{eq:summary_comm_dia}
\xymatrix@C+1pc@R+1pc{
\pi \ar[r]^-{\eqref{eq:proof_to_ideal}}\ar[ddd]_-{\gamma} & (P_\pi, G_\pi, <_0) \ar[r]^-{\text{destabilise}} & (P_\pi, G_\pi, <_\Gamma) \ar[d]^-{\text{Buchberger}}\\
& & (P_\pi, G_{\mathscr{N}}, <_\Gamma) \ar[d]^-{\text{intersection}}\\
& & (P_{\pi'}, G_{\mathscr{N}} \cap P_{\pi'}, <_0) \ar@{=}[d]^-{\text{Theorem } \ref{thm:elimination_ours}}\\
\pi' \ar[rr]_-{\eqref{eq:proof_to_ideal}} & & (P_{\pi'}, G_{\pi'}, <_0)
}
\end{equation}
The language of stability is best understood in the context of the realisations: the combinatorial $1$-manifolds together with a discrete Morse function (``height on the page'') that we have associated to any pair $(G_\pi, <)$ where $<$ is a lexicographic monomial order. If the realisation contains no local maximum (that is, a roof) then by Corollary \ref{cor:falling_roofs_computes_grobner}, $G_\pi$ is a Gröbner basis with respect to $<$. Otherwise the Buchberger algorithm (or equivalently Falling Roofs) proceeds to ``straighten out'' the pair consisting of the $1$-manifold with its Morse function, until the subgraph of live edges contains no such local maxima. That is, the Falling Roofs algorithm transforms a segment looking like
\begin{equation}\label{eq:defect_peak1}
\begin{tikzcd}[row sep = tiny, column sep = tiny]
        &     &     & Z_i                                           \\
        &     & Z_{i-1}                                                 \\
        &     &     &     &     & \,                               \\
        &     &     &     & Z_{i+1}                                     \\
        & Z_{i-2}\\
     \,\\
	\arrow[from=2-3,to=1-4]
	\arrow[from=4-5,to=3-6]
	\arrow[from=4-5,to=1-4]
	\arrow[from=5-2,to=2-3]
	\arrow[from=6-1,to=5-2]
\end{tikzcd}
\end{equation}
into a segment looking like
\begin{equation}\label{eq:defect_peak2}
\tikz[%remember picture, 
overlay]{
    \filldraw[fill=white,draw=blue!50!yellow] (-0.2,-2.2) rectangle (7.8,0.8);
}
\begin{tikzcd}[row sep = small, column sep = small]
        &     &     & Z_i                                           \\
        &     & Z_{i-1}                                                 \\
        &     &     &     &     & \,                               \\
        &     &     &     & Z_{i+1}                                     \\
        & Z_{i-2}\\
     \,\\
	\arrow[dotted,from=2-3,to=1-4]
	\arrow[from=5-2,to=4-5]
	\arrow[from=4-5,to=3-6]
	\arrow[dotted,from=4-5,to=1-4]
	\arrow[dotted,from=5-2,to=2-3]
	\arrow[from=6-1,to=5-2]
\end{tikzcd}
\end{equation}

The algebro-geometric procedure in the right column of \eqref{eq:summary_comm_dia} and the topological procedure in \eqref{eq:defect_peak1},\eqref{eq:defect_peak2} represent our attempt to find mathematical shadows of the process of cut-elimination process for multiplicative proof nets.

\subsection{Towards Geometry}\label{section:towards_geometry}

Let $\pi$ be a proof net with ideal $I_\pi \subseteq P_\pi$. The algebraic set
\[
\mathbb{V}(I_\pi) = \{ a \in \mathbb{A}^\pi \l f(a) = 0 \text{ for all } f \in I_\pi \}
\]
is an intersection of pairwise diagonals, which is not geometrically very interesting. More geometry can be introduced into the picture by considering pairs $\mathbb{V}(I_\pi) \subseteq V$ where $V$ is an an algebraic set. For example, given any $N \ge 2$ the polynomial $W = \sum_{f \in G_{\pi}} f^N$ is contained in $I_\pi$ and so we have an inclusion
\[
\mathbb{V}(I_\pi) \subseteq \mathbb{V}(W)\,.
\]
The subscheme $\mathbb{V}(I_\pi)$ naturally determines an object in the bounded derived category of coherent sheaves of the singular hypersurface $\mathbb{V}(W)$, and in closely related categories like the category of matrix factorisations of $W$. In sequels to the present paper we will explore this idea as a way of associating objects in triangulated categories to proofs; the role of the Buchberger algorithm in the present paper is played by constructions in homological algebra such as the perturbation lemma.

\appendix

\section{Geometry of Interaction: Multiplicatives}\label{section:mult_goi0}
	In \cite{Troiani} a permutation is associated to each proof net, as was originally done by Girard \cite{Girard}. In short, these permutations act on the disjoint union of all the unoriented atoms of all the conclusions to axiom links of $\pi$. The permutation of interest here is $\alpha_\pi$ (Definition \cite[4.1.3]{Troiani}) which permutes two unoriented atoms $X,Y$ if and only if $X,Y$ are respectively unoriented atoms of formulas $A,\neg A$ where $A,\neg A$ are the conclusions of some axiom link in $\pi$, and $X,Y$ appear at the same index of the \emph{sequences} of unoriented atoms of $A,\neg A$. This permutation is equal to $\sigma$ which is introduced in Proposition \ref{prop:permutation}.
	\begin{proposition}\label{prop:GoI0_relation}
		Let $\pi$ be a proof net with single conclusion $A$ with sequence of oriented atoms given by: $\big((U_{1},u_1),...,(U_n,u_n)\big)$ and let $\sigma$ be as given in Proposition \ref{prop:permutation}. Let $\delta_\pi$ be the permutation defined in \cite[Definition 4.2.10]{Troiani}. Then for all $i= 1,...,n$ we have:
		\begin{equation}
			\delta_{\pi}(U_i) = U_{\sigma(i)}
		\end{equation}
	\end{proposition}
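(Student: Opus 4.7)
The plan is to proceed by induction on the weight of $\pi$ (the same notion used in the proof of Proposition \ref{prop:permutation}), reducing everything to the cut-free case where both $\delta_\pi$ and $\sigma$ admit a transparent description. The key point is that each of the two permutations is stable under cut-elimination: for $\sigma$ this follows from Remark \ref{rmk:persistency}, since a reduction $\gamma: \pi \lto \pi'$ preserves the set of conclusions and sends each persistent path of $\pi$ to a persistent path of $\pi'$ by deleting the eliminated unoriented atoms; hence the permutation matching endpoints of persistent paths in the conclusion $A$ is unchanged. For $\delta_\pi$ the analogous invariance under cut-elimination is precisely the soundness of the Geometry of Interaction execution for multiplicative proof nets, which is proved in \cite{Troiani}.

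In the cut-free base case, I would argue directly as follows. A persistent path $\mathscr{P}$ in a cut-free $\pi$ begins at a negative unoriented atom $V_{\sigma(i)}$ of the conclusion $A$, proceeds upwards through a sequence of $\otimes$ and $\parr$ links (the relation $\sim$ on each such link identifies an atom in the premise with the corresponding atom in the conclusion), reaches a single $\ax$ link whose two conclusions are labelled by some $B, \neg B$, and then descends symmetrically back to a positive unoriented atom $U_i$ of $A$. The permutation $\sigma$ therefore coincides with the composite: ``go from the $i$-th positive atom of the conclusion up to an axiom, cross the axiom by the identification of oriented atoms at the same position in the sequences of $B$ and $\neg B$, then come back down to a negative atom of the conclusion''. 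This is precisely the description of $\delta_\pi$ in the cut-free case from \cite[Definition 4.2.10]{Troiani}, where crossing the axiom is governed by $\alpha_\pi$ and the traversal of $\otimes, \parr$ links is recorded by the structural permutation.

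For the inductive step, given a reduction $\gamma: \pi \lto \pi'$ with $\pi'$ of strictly smaller weight, the inductive hypothesis gives $\delta_{\pi'}(U_i) = U_{\sigma_{\pi'}(i)}$ for the conclusion atoms of $\pi'$, which are in bijection with those of $\pi$ since conclusions are preserved under reduction. Combining the two invariance statements $\sigma_\pi = \sigma_{\pi'}$ and $\delta_\pi = \delta_{\pi'}$ (both read as permutations of the set of unoriented atoms in the common conclusion $A$) yields the claim.

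The main obstacle is bookkeeping rather than conceptual: one must check carefully that the identification of ``conclusion atom'' on either side of a reduction used in Remark \ref{rmk:persistency} agrees with the identification used to compare $\delta_\pi$ and $\delta_{\pi'}$ in \cite{Troiani}, and that the orientation conventions match so that $\alpha_\pi$ from the appendix really pairs the same positive/negative atoms that are linked by persistent paths through an $\ax$ link in the base case. Once these identifications are pinned down, the induction closes and the proposition follows.
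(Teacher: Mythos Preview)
Your argument is correct and follows a route that is close in spirit to the paper's but organised differently. Both proofs rest on the same invariance facts: that $\sigma$ and $\delta_\pi$ are unchanged under reduction (the paper cites \cite[Lemma 4.2.4]{Troiani} for both). The difference is in how the base case is reached. You run a single induction on weight down to the cut-free case and then identify $\sigma$ with $\delta_\pi$ directly, including when axiom links have non-atomic conclusions. The paper instead performs a three-stage reduction: first it treats the case of atomic axioms and no $m$-redexes by analysing the explicit ``chain'' structure of alternating $\ax$/$\cut$ links; then it eliminates $m$-redexes to reach that case; finally it invokes $\eta$-expansion to reduce arbitrary axioms to atomic ones. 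Your approach is more streamlined in that it avoids $\eta$-expansion entirely, at the cost of having to verify the cut-free identification for non-atomic axioms (which is straightforward once one unwinds $\alpha_\pi$ and the structural permutation, but is not spelled out in the paper). The paper's approach has the advantage that its atomic-axiom base case makes the match with the chain description of $\delta_\pi$ in \cite[Theorem 4.2.11]{Troiani} completely explicit.
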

	\begin{proof}
		First we consider the special case when there are no $m$-redexes in $\pi$, and all conclusions of all axiom links are atomic. We will use some structure of such proof nets proved in \cite{Troiani}. First, we observe that all reductions are of the following form (see \cite[Lemma 4.2.5]{Troiani}).
		% https://q.uiver.app/?q=WzAsMTQsWzEsMCwiXFxheCJdLFswLDEsIlgiXSxbMiwxLCJcXG5lZyBYIl0sWzMsMiwiXFxjdXQiXSxbNCwxLCJYIl0sWzUsMCwiXFxheCJdLFs2LDEsIlxcbmVnIFgiXSxbOSwwLCJcXGF4Il0sWzgsMSwiWCJdLFsxMCwxLCJcXG5lZyBYIl0sWzAsMiwiXFx2ZG90cyJdLFs2LDIsIlxcdmRvdHMiXSxbOCwyLCJcXHZkb3RzIl0sWzEwLDIsIlxcdmRvdHMiXSxbMCwxLCIiLDAseyJjdXJ2ZSI6Miwic3R5bGUiOnsiaGVhZCI6eyJuYW1lIjoibm9uZSJ9fX1dLFswLDIsIiIsMix7ImN1cnZlIjotMiwic3R5bGUiOnsiaGVhZCI6eyJuYW1lIjoibm9uZSJ9fX1dLFsxLDEwXSxbMiwzLCIiLDIseyJjdXJ2ZSI6Mn1dLFs0LDMsIiIsMCx7ImN1cnZlIjotMn1dLFs1LDQsIiIsMCx7ImN1cnZlIjoyLCJzdHlsZSI6eyJoZWFkIjp7Im5hbWUiOiJub25lIn19fV0sWzUsNiwiIiwyLHsiY3VydmUiOi0yLCJzdHlsZSI6eyJoZWFkIjp7Im5hbWUiOiJub25lIn19fV0sWzYsMTFdLFs3LDgsIiIsMix7ImN1cnZlIjoyLCJzdHlsZSI6eyJoZWFkIjp7Im5hbWUiOiJub25lIn19fV0sWzcsOSwiIiwwLHsiY3VydmUiOi0yLCJzdHlsZSI6eyJoZWFkIjp7Im5hbWUiOiJub25lIn19fV0sWzksMTNdLFs4LDEyXV0=
		\begin{equation}\label{eq:redex_form}
			\begin{tikzcd}[column sep = small, row sep = small]
				& \ax &&&& \ax &&&& \ax \\
				X && {\neg X} && X && {\neg X} && X && {\neg X} \\
				\vdots &&& \cut &&& \vdots && \vdots && \vdots
				\arrow[curve={height=12pt}, no head, from=1-2, to=2-1]
				\arrow[curve={height=-12pt}, no head, from=1-2, to=2-3]
				\arrow[from=2-1, to=3-1]
				\arrow[curve={height=12pt}, from=2-3, to=3-4]
				\arrow[curve={height=-12pt}, from=2-5, to=3-4]
				\arrow[curve={height=12pt}, no head, from=1-6, to=2-5]
				\arrow[curve={height=-12pt}, no head, from=1-6, to=2-7]
				\arrow[from=2-7, to=3-7]
				\arrow[curve={height=12pt}, no head, from=1-10, to=2-9]
				\arrow[curve={height=-12pt}, no head, from=1-10, to=2-11]
				\arrow[from=2-11, to=3-11]
				\arrow[from=2-9, to=3-9]
			\end{tikzcd}
		\end{equation}
		Hence, all the cut links of $\pi$ appear inside ``chains" of axiom and cut links, as in the following Diagram.
		% https://q.uiver.app/?q=WzAsMTQsWzEsMCwiXFxheCJdLFswLDEsIlgiXSxbMiwxLCJcXG5lZyBYIl0sWzMsMiwiXFxjdXQiXSxbNCwxLCJYIl0sWzUsMCwiXFxoZG90cyJdLFs2LDBdLFs2LDEsIlgiXSxbNywyLCJcXGN1dCJdLFs4LDEsIlxcbmVnIFgiXSxbOSwwLCJcXGF4Il0sWzEwLDEsIlgiXSxbMCwyLCJcXHZkb3RzIl0sWzEwLDIsIlxcdmRvdHMiXSxbMTAsMTEsIiIsMCx7ImN1cnZlIjotMiwic3R5bGUiOnsiaGVhZCI6eyJuYW1lIjoibm9uZSJ9fX1dLFsxMCw5LCIiLDIseyJjdXJ2ZSI6Miwic3R5bGUiOnsiaGVhZCI6eyJuYW1lIjoibm9uZSJ9fX1dLFs5LDgsIiIsMix7ImN1cnZlIjotMn1dLFs3LDgsIiIsMCx7ImN1cnZlIjoyfV0sWzUsNywiIiwwLHsiY3VydmUiOi0yLCJzdHlsZSI6eyJoZWFkIjp7Im5hbWUiOiJub25lIn19fV0sWzUsNCwiIiwyLHsiY3VydmUiOjIsInN0eWxlIjp7ImhlYWQiOnsibmFtZSI6Im5vbmUifX19XSxbNCwzLCIiLDIseyJjdXJ2ZSI6LTJ9XSxbMiwzLCIiLDAseyJjdXJ2ZSI6Mn1dLFswLDIsIiIsMCx7ImN1cnZlIjotMiwic3R5bGUiOnsiaGVhZCI6eyJuYW1lIjoibm9uZSJ9fX1dLFswLDEsIiIsMix7ImN1cnZlIjoyfV0sWzEsMTJdLFsxMSwxM11d
		\[\begin{tikzcd}[column sep = tiny, row sep = small]
			& \ax &&&& \ldots & {} &&& \ax \\
			X && {\neg X} && X && X && {\neg X} && X \\
			\vdots &&& \cut &&&& \cut &&& \vdots
			\arrow[curve={height=-12pt}, no head, from=1-10, to=2-11]
			\arrow[curve={height=12pt}, no head, from=1-10, to=2-9]
			\arrow[curve={height=-12pt}, from=2-9, to=3-8]
			\arrow[curve={height=12pt}, from=2-7, to=3-8]
			\arrow[curve={height=-12pt}, no head, from=1-6, to=2-7]
			\arrow[curve={height=12pt}, no head, from=1-6, to=2-5]
			\arrow[curve={height=-12pt}, from=2-5, to=3-4]
			\arrow[curve={height=12pt}, from=2-3, to=3-4]
			\arrow[curve={height=-12pt}, no head, from=1-2, to=2-3]
			\arrow[curve={height=12pt}, from=1-2, to=2-1]
			\arrow[from=2-1, to=3-1]
			\arrow[from=2-11, to=3-11]
		\end{tikzcd}\]
	Also, by Proposition \cite[4.1.7]{Troiani}, a cut-free proof net with sole conclusion $B$ satisfying the property that every conclusion of every axiom link is atomic, is unique up to axiom links. Since each chain of axiom and cut links can be replaced by a single axiom link, it follows from this Proposition that $\pi$ is uniquely determined up to these chains. The proof net $\pi$ is thus a root labelled $\operatorname{c}$ with an incoming edge labelled $A$, the body of $\pi$ is a finite binary tree uniquely determined by $A$, with some choice of chains connecting the leaves of the body of $\pi$. See Example \cite[4.1.9]{Troiani} for an example. Consider the subgraph of $\pi$ given by removing the chains, we notice that the order of the atomic propositions at the leaves (read from left to right) is the same as the order of the sequence of oriented atoms of $A$. Hence, $U_{\sigma(i)}$ is the unoriented atom corresponding to an atomic proposition connected via a chain to the formula whose corresponding unoriented atom is $U_{i}$.
	
	On the other hand, as described in the proof of Theorem \cite[4.2.11]{Troiani}, the permutation $\delta_\pi$ is a product of transpositions, where two unoriented formulas $X,Y$ are swapped if and only if they are at the extreme ends of a common chain. Hence, $\delta_{\pi}(U_i) = U_{\sigma(i)}$.
	
	Now we consider the case where $\pi$ may admit $m$-redexes, we still assume that every conclusion of every axiom link is atomic though. Let $\pi'$ denote the proof net obtained by reducing all $m$-redexes, by confluence of cut-reduction (Proposition \cite[3.0.3]{Troiani}), $\pi'$ is independent of the order in which these $m$-redexes are reduced. We claim that $\delta_{\pi} = \delta_{\pi'}$ and also that the permutation $\sigma$ obtained from $\pi$ is equal to the permutation $\sigma'$ obtained from $\pi'$. Both of these facts follow from Lemma \cite[4.2.4]{Troiani}.
	
	Finally, we consider the general case. In \cite[Definition 3.0.9]{Troiani} we defined an \emph{$\eta$-increx} to be a subgraph of a proof net of the following form.
	\begin{equation}
		\begin{tikzcd}[column sep = small, row sep = small]
			& \ax \\
			{A \otimes B} && {\neg B \parr \neg A} \\
			\vdots && \vdots
			\arrow[curve={height=-12pt}, no head, from=1-2, to=2-3]
			\arrow[curve={height=12pt}, no head, from=1-2, to=2-1]
			\arrow[from=2-1, to=3-1]
			\arrow[from=2-3, to=3-3]
		\end{tikzcd}
	\end{equation}
We can replace this by the subgraph \eqref{eq:eta_expansion} below, and then repeat that process to eventually obtain a proof net $\pi'$ such that every every conclusion of every axiom link of $\pi'$ is atomic.
\begin{equation}\label{eq:eta_expansion}
\begin{tikzcd}[column sep = small, row sep = small]
	&&& \ax \\
	&&& \ax \\
	A && B & & \neg B & & \neg A\\
	& \otimes &&&& \parr\\
	& A \otimes B &&&& \neg B \parr \neg A\\
	& \vdots &&&& \vdots
	\arrow[curve={height=18pt}, no head, from=1-4, to=3-1]
	\arrow[curve={height=-18pt}, no head, from=1-4, to=3-7]
	\arrow[curve={height=12pt}, no head, from=2-4, to=3-3]
	\arrow[curve={height=-12pt}, no head, from=2-4, to=3-5]
	\arrow[curve={height=12pt}, from=3-1, to=4-2]
	\arrow[curve={height=-12pt}, from=3-3, to=4-2]
	\arrow[curve={height=12pt}, from=3-5, to=4-6]
	\arrow[curve={height=-12pt}, from=3-7, to=4-6]
	\arrow[no head, from=4-2, to=5-2]
	\arrow[no head, from=4-6, to=5-6]
	\arrow[from=5-2, to=6-2]
	\arrow[from=5-6, to=6-6]
\end{tikzcd}
\end{equation}
One must check that $\pi'$ is independent of the choice of $\eta$-expansion procedure, and also that this process terminates. This is done just after Lemma 3.0.10 in \cite{Troiani}. It now remains to show that $\delta_{\pi} = \delta_{\pi'}$ and that the permutation $\sigma$ obtained from $\pi$ is equal to the permutation $\sigma'$ obtained from $\pi'$. Both of these facts follow from inspection of \eqref{eq:eta_expansion}.
		\end{proof}

\end{document}